\newcommand{\quash}[1]{}  
\newcommand{\isom}{\stackrel{\sim}{\to}}
\newcommand{\const}[1]{\overline{\QQ}_{\ell,#1}}
\newcommand{\twtimes}[1]{\stackrel{#1}{\times}}
\newcommand{\homo}[2]{\mathbf{H}_{#1}({#2})}   
\newcommand{\homog}[2]{\textup{H}_{#1}({#2})}  
\newcommand{\cohog}[2]{\textup{H}^{#1}({#2})}     
\newcommand{\cohoc}[2]{\textup{H}_{c}^{#1}({#2})}     
\newcommand{\hBM}[2]{\textup{H}^{\textup{BM}}_{#1}({#2})}  
\newcommand{\oleft}[1]{\overleftarrow{#1}}
\newcommand{\oright}[1]{\overrightarrow{#1}}
\DeclareMathOperator{\Hom}{Hom}
\DeclareMathOperator{\End}{End}
\DeclareMathOperator{\Ext}{Ext}
\DeclareMathOperator{\Aut}{Aut}
\DeclareMathOperator{\Spec}{Spec}
\DeclareMathOperator{\Res}{Res}
\DeclareMathOperator{\Gal}{Gal}
\DeclareMathOperator{\Gr}{Gr}
\DeclareMathOperator{\id}{id}
\DeclareMathOperator{\ev}{ev}
\DeclareMathOperator{\Irr}{Irr}
\DeclareMathOperator{\Pic}{Pic}
\DeclareMathOperator{\rk}{rk}
\DeclareMathOperator{\Lie}{Lie}
\DeclareMathOperator{\reg}{reg}
\DeclareMathOperator{\rs}{rs}
\DeclareMathOperator{\codim}{codim}
\DeclareMathOperator{\Av}{Av}
\newcommand{\Ad}{\textup{Ad}}
\DeclareMathOperator{\act}{act}
\DeclareMathOperator{\Frac}{Frac}
\DeclareMathOperator{\coker}{coker}
\def\DD{\mathbb{D}}
\def\GG{\mathbb{G}}
\def\PP{\mathbb{P}}
\def\QQ{\mathbb{Q}}
\def\ZZ{\mathbb{Z}}
\def\frg{\mathfrak{g}}
\def\frt{\mathfrak{t}}
\def\frb{\mathfrak{b}}
\def\frc{\mathfrak{c}}
\newcommand{\frl}{\mathfrak{l}}
\def\calO{\mathcal{O}}
\def\calM{\mathcal{M}}
\def\calA{\mathcal{A}}
\def\calB{\mathcal{B}}
\def\calE{\mathcal{E}}
\def\calF{\mathcal{F}}
\def\calG{\mathcal{G}}
\def\calH{\mathcal{H}}
\def\calL{\mathcal{L}}
\def\calP{\mathcal{P}}
\def\calX{\mathcal{X}}
\def\calY{\mathcal{Y}}
\def\tilp{\widetilde{p}}
\def\tilw{\widetilde{w}}
\def\tilW{\widetilde{W}}
\def\tila{\widetilde{a}}
\newcommand{\till}{\widetilde{\mathfrak{l}}}
\def\tilX{\widetilde{X}}
\def\tilO{\widetilde{\calO}}
\def\tilsigma{\widetilde{\sigma}}
\def\bP{\mathbf{P}}
\def\bI{\mathbf{I}}
\newcommand{\bQ}{\mathbf{Q}}
\newcommand{\bR}{\mathbf{R}}
\newcommand{\bG}{\mathbf{G}}
\newcommand{\bL}{\mathbf{L}}
\newcommand{\unl}{\underline{\mathfrak{l}}}
\newcommand{\unL}{\underline{L}}
\def\Ft{\textup{Ft}}
\def\limD{\underleftarrow{D}}
\def\ind{\textup{ind}}
\def\pro{\textup{pro}}
\def\adj{\textup{ad.}}
\def\xch{\mathbb{X}^*}
\def\xcoch{\mathbb{X}_*}
\def\Sing{\textup{Sing}}
\def\Corr{\textup{Corr}}
\def\Fil{\textup{Fil}}
\def\loc{\textup{loc}}
\def\glob{\textup{glob}}
\def\Ql{\overline{\QQ}_\ell}
\def\one{\mathbf{1}}
\def\aff{\textup{aff}}
\def\Wa{W_{\textup{aff}}}
\def\Tot{\textup{Tot}}
\def\cent{\Ql[\xcoch(T)]^{W}}
\def\Gm{\GG_{m}}
\def\forg{\textup{For}}
\def\Grass{\mathcal{G}r}
\def\Fl{\textup{Fl}}
\def\Hit{\textup{Hit}}
\def\parab{\textup{par}}
\def\ani{\textup{ani}}
\def\Mpar{\mathcal{M}^{\parab}}
\def\MHit{\mathcal{M}^{\Hit}}
\def\Hecke{\calH\textup{ecke}}
\def\Heckep{\Hecke^{\parab}}
\def\tcA{\widetilde{\calA}}
\def\tcArs{\tcA^{\rs}}
\def\AHit{\calA^{\Hit}}
\def\Aa{\calA^{\ani}}
\def\Ah{\calA^{\heartsuit}}
\def\tcB{\widetilde{\calB}}
\def\tcBrs{\widetilde{\calB}^{\rs}}
\def\Spr{\textup{Spr}}
\def\SSpr{\mathcal{S}\textup{pr}}
\def\hSSpr{\widehat{\SSpr}}
\def\Xz{X^{z}}
\def\hM{\widehat{\mathcal{M}}}
\def\hMpar{\hM^{\parab}}
\def\hMparrs{\hM^{\parab,\rs}}
\def\hMHit{\hM^{\Hit}}
\def\hRes{\widehat{\Res}}
\def\hata{\widehat{a}}
\def\hP{\widehat{\mathcal{P}}}
\def\hep{\widehat{\epsilon}}
\def\hH{\widehat{\mathcal{H}}}
\def\hHk{\hH\textup{ecke}}
\def\hHpar{\hH\textup{ecke}^{\parab}}
\def\fpar{f^{\parab}}
\def\fHit{f^{\Hit}}
\def\fQl{f^{\parab}_!\Ql}
\def\fHQl{f^{\Hit}_!\Ql}
\def\hf{\widehat{f}}
\def\hfpar{\widehat{f}^{\parab}}
\def\hfH{\widehat{f}^{\Hit}}
\swapnumbers \theoremstyle{plain}
\newtheorem{theorem}[subsection]{Theorem}
\newtheorem{lemma}[subsection]{Lemma}
\newtheorem{cor}[subsection]{Corollary}
\newtheorem{prop}[subsection]{Proposition}
\newtheorem{conj}[subsection]{Conjecture}
\theoremstyle{definition}
\newtheorem{remark}[subsection]{Remark}
\numberwithin{equation}{section}
\title{The spherical part of the local and global Springer actions}
\author{Zhiwei Yun}
\thanks{Partially supported by the NSF grant DMS-0969470.}
\address{Department of Mathematics, MIT, Cambridge, MA 02139, USA}
\email{zyun@math.mit.edu}
\date{May 2011}
\subjclass[2010]{14H60, 14M15, 20F55, 20G25}
\begin{document}

\begin{abstract}
The affine Weyl group acts on the cohomology (with compact support) of affine Springer fibers (local Springer theory) and of parabolic Hitchin fibers (global Springer theory). In this paper, we show that in both situations, the action of the center of the group algebra of the affine Weyl group (the spherical part) factors through the action of the component group of the relevant centralizers. In the situation of affine Springer fibers, this partially verifies a conjecture of Goresky-Kottwitz-MacPherson and Bezrukavnikov-Varshavsky.

We first prove this result for the global Springer action, and then deduce the result for the local Springer action from that of the global one. The argument strongly relies on the fact that we can deform points on a curve, hence giving an example of using global Springer theory to solve more classical problems.

\end{abstract}

\maketitle

\tableofcontents

\section{Introduction}
Let $k$ be an algebraically closed field. Let $G$ be a reductive algebraic group over $k$. We assume either char$(k)=0$ or char$(k)$ is large with respect to $G$ (see \S\ref{ss:G}). Let $\frg$ be the Lie algebra of $G$. Let $\calB$ be the flag variety of $G$. For $v\in\frg(k)$, the {\em Springer fiber} of $v$ is the closed subvariety $\calB_{v}\subset\calB$ consisting of all Borel subgroups of $G$ whose Lie algebras contain $v$. Classical Springer theory \cite{Spr} gives an action of the Weyl group $W$ of $G$ on the cohomology of $\calB_{v}$. One the other hand, the centralizer $G_{v}$ of $v$ in $G$ acts on $\calB_{v}$, hence on the cohomology of $\calB_{v}$ via its component group $\pi_{0}(G_{v})$. These two symmetries commute with each other:
\begin{equation*}
W\curvearrowright\cohog{*}{\calB_{v}}\curvearrowleft\pi_{0}(G_{v}).
\end{equation*}
However, there is no obvious way to recover the $\pi_{0}(G_{v})$-action on $\cohog{*}{\calB_{v}}$ solely from the $W$-action.

Now we consider the affine situation. Let $F=k((\varpi))$ be the field of formal Laurent series in one variable and $\calO_{F}=k[[\varpi]]$. Let $\Fl_{G}$ be the affine flag variety of $G$ classifying all Iwahori subgroups of $G(F)$. This is an ind-scheme, an infinite union of projective varieties. We may identify $\Fl_{G}(k)$ with $G(F)/\bI$ for a fixed Iwahori subgroup $\bI\subset G(F)$. For any regular semisimple element $\gamma\in\frg(F)$, Kazhdan and Lusztig \cite{KL} defined a closed sub-ind-scheme $\Spr_{\gamma}\subset\Fl_{G}$, called the {\em affine Springer fiber} of $\gamma$. The set $\Spr_{\gamma}(k)$ consists of those Iwahori subgroups whose Lie algebras contain $\gamma$. The ind-scheme $\Spr_{\gamma}$ is a possibly infinite union of projective varieties of dimension expressible in terms of $\gamma$ (see \cite{Be}).

In \cite{Lu}, Lusztig defined an action of the affine Weyl group $\Wa$ on the homology of affine Springer fibers $\Spr_{\gamma}$. We will review this construction in \S\ref{s:loc}, and extend it to an action of the extended affine Weyl group $\tilW=\xcoch(T)\rtimes W$ on both $\homog{*}{\Spr_{\gamma}}$ and $\cohoc{*}{\Spr_{\gamma}}$. On the other hand, the centralizer group $G_{\gamma}(F)$ acts on $\Spr_{\gamma}$, hence induces an action of its component group $\pi_{0}(G_{\gamma}(F))$ on the homology of $\Spr_{\gamma}$. Here we view $G_{\gamma}(F)$ as a group ind-scheme over the base field $k$. These two symmetries on $\homog{*}{\Spr_{\gamma}}$ again commute with each other:
\begin{equation*}
\tilW\curvearrowright\homog{*}{\Spr_{\gamma}}\curvearrowleft\pi_{0}(G_{\gamma}(F)).
\end{equation*}
Similar statement holds for $\cohoc{*}{\Spr_{\gamma}}$. 

A priori, the definition of the $\tilW$-action and the $\pi_{0}(G_{\gamma}(F))$-action has nothing to do with each other. However, as opposed to the situation in classical Springer theory, we expect that the $\pi_{0}(G_{\gamma}(F))$-action be completely determined by the ``central character'' of the $\tilW$-action. The center of the group algebra $\Ql[\tilW]$ is $\cent$ (superscript $W$ means taking $W$-invariants). If one views $\Ql[\tilW]$ as the specialization of the affine Hecke algebra at $q=1$, then $\cent$ is the specialization of the spherical Hecke algebra at $q=1$. For this reason, we shall call $\cent$ the {\em spherical part} of the group algebra $\Ql[\tilW]$. We shall see that there is a canonical algebra homomorphism (see \S\ref{ss:definesigma})
\begin{equation}\label{localcentpi}
\sigma_{\gamma}:\cent\to\Ql[\pi_{0}(G_{\gamma}(F))].
\end{equation}

\begin{conj}[Goresky, Kottwitz and MacPherson \cite{Ko}; independently Bezrukavnikov and Varshavsky \cite{BV}]\label{conj}
For any regular semisimple element $\gamma\in\frg(F)$ and any $i\in\ZZ_{\geq0}$, the spherical part of the $\tilW$-action on $\homog{i}{\Spr_{\gamma}}$ and $\cohoc{i}{\Spr_{\gamma}}$
\begin{equation*}
\cent\to\End(\homog{i}{\Spr_{\gamma}}), \hspace{.5cm}\cent\to\End(\cohoc{i}{\Spr_{\gamma}})
\end{equation*} 
factors through the action of $\pi_{0}(G_{\gamma}(F))$ via the homomorphism \eqref{localcentpi}.
\end{conj}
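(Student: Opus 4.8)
The plan is to reduce the local statement to its global counterpart, where the extra room provided by moving points on a curve makes deformation arguments available. Concretely, given a regular semisimple $\gamma \in \frg(F)$, I would choose a smooth projective curve $X$ over $k$, a point $x \in X$, and a Higgs field (with appropriate poles) on a suitable line bundle over $X$ whose restriction to the formal disc at $x$ is conjugate to $\gamma$, and which is everywhere else unramified. Globalizing in this way, the parabolic Hitchin fibration $\fpar \colon \Mpar \to \calA^{\Hit} \times X$ has fibers that, after restriction to a neighborhood of the chosen Hitchin base point and $x \in X$, receive a map from the affine Springer fiber $\Spr_\gamma$; the global Springer $\tilW$-action restricts to Lusztig's local action, and the global symmetric group (Picard stack) $\hP$ acting on parabolic Hitchin fibers restricts to the $\pi_0(G_\gamma(F))$-action. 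Thus it suffices to prove the analogous factorization statement for the global Springer action: the spherical part $\cent$ acting on $R^i \fpar_! \Ql$ factors through the action of $\pi_0$ of the relevant global centralizer group scheme via the global analogue $\widehat\sigma$ of \eqref{localcentpi}.

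For the global statement, the key geometric input is that the parabolic Hitchin base factors as $\calA^{\Hit} \times X$ and that the whole situation is a family over $X$; the $\tilW$-action on $R\fpar_! \Ql$ is constructed (in my earlier work on global Springer theory) via Hecke correspondences, and in particular the $\xcoch(T)$-part of $\tilW$ is realized by correspondences that move the marked point along $X$. I would first recall that the central subalgebra $\cent \subset \Ql[\tilW]$ acts on $R\fpar_! \Ql$ through endomorphisms that are, after passing to the open locus where the Hitchin map is nice (the anisotropic or at least $\heartsuit$ locus), compatible with the Picard-stack action. The heart of the argument is a \emph{homotopy/specialization} statement: the operators coming from $\cent$ are locally constant in the $X$-direction, so they are determined by their behavior at a generic (unramified) point of $X$, where the affine Springer fiber degenerates to a point and the centralizer is a torus; at such a point the factorization is trivial. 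One then propagates this triviality back to the special point $x$ using the fact that $R^i\fpar_!\Ql$ is (the restriction to $\calA^\Hit \times X$ of) a constructible complex and that the homomorphism $\widehat\sigma$ is itself constructed so as to interpolate correctly.

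More precisely, I would argue as follows. Work over the regular semisimple (or anisotropic) part $\tcArs$ of the enhanced Hitchin base. There, $\hP$ acts on $\Mparrs$ and the quotient is understood; the component group sheaf $\pi_0(\hP)$ over $\tcArs \times X$ is a constructible sheaf of abelian groups. Produce the homomorphism $\widehat\sigma \colon \cent \to \Ql[\pi_0(\hP)]$ as a map of sheaves of algebras over the base. The claim to prove is that the two composites
\begin{equation*}
\cent \longrightarrow \Ql[\tilW] \longrightarrow \End(R^i\fpar_!\Ql), \qquad \cent \xrightarrow{\widehat\sigma} \Ql[\pi_0(\hP)] \longrightarrow \End(R^i\fpar_!\Ql)
\end{equation*}
agree. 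Since both sides are morphisms of sheaves on an irreducible base (after restricting to a suitable irreducible open), it is enough to check equality on a dense open subset; the natural choice is the locus where the cameral cover is étale over all of $X$, equivalently where every affine Springer fiber in the family is a single point (or an abelian variety torsor with trivial $\pi_0$). There the $\tilW$-action and the $\pi_0$-action are both transparent and the identity holds by direct inspection. The transition from "equality of sheaf maps on a dense open" to "equality everywhere" is where I expect to spend most of the effort: one needs that $R^i\fpar_!\Ql$, together with its $\tilW$- and $\hP$-module structures, has no embedded or torsion-type phenomena that could obstruct the spreading-out — concretely, that the relevant sheaves are \emph{middle extensions} (or at least have no subquotients supported on the bad locus) so that a map of modules that vanishes generically vanishes identically. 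Establishing this "no bad subquotient" property — likely via the decomposition theorem applied to $\fpar$ and an analysis of the supports of the simple constituents of $R\fpar_!\Ql$, combined with the $\hP$-equivariance which rigidifies each constituent — is the main obstacle.

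Finally, to descend from the global to the local statement I would invoke a \emph{product formula / local-global compatibility} for parabolic Hitchin fibers: étale-locally near $(a,x)$ with $a$ having a single ramification point at $x$, the parabolic Hitchin fiber is, up to a homeomorphism respecting the relevant symmetries, a product of the affine Springer fiber $\Spr_\gamma$ with a piece carrying no interesting $\tilW$- or $\pi_0$-structure in degrees we care about (or, more robustly, there is a map $\Spr_\gamma \to$ (Hitchin fiber) inducing a $\tilW \times \pi_0(G_\gamma(F))$-equivariant map on cohomology that is injective in each degree). Given such a statement, the factorization through $\pi_0(G_\gamma(F))$ for $\homog{i}{\Spr_\gamma}$ and $\cohoc{i}{\Spr_\gamma}$ follows from the global factorization by restriction. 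The compatibility of $\sigma_\gamma$ with $\widehat\sigma$ under this restriction is a matter of unwinding the definitions of both homomorphisms (each is built from the same recipe applied to the tame fundamental group / inertia, locally versus globally), so it should be routine once the global result is in hand.
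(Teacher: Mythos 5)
Your overall strategy of reducing local to global, proving a global factorization, and returning via a product formula is indeed the paper's architecture, and several of your ingredients (local constancy in the $X$-direction, restriction to a nice dense locus, compatibility of $\sigma_\gamma$ with the global $\sigma$) do appear. But the central step of your proposal --- passing from ``equality of the two endomorphisms on a dense open'' to ``equality everywhere'' via the decomposition theorem and an analysis of supports of simple constituents --- does not work as stated, and the paper explicitly flags this: such a support-theoretic argument controls only the \emph{semisimplification} of $\bR\fQl$, so it would only show that the $\cent$-action and the $\pi_0(\calP)$-action have the same composition factors, not that they are literally equal as endomorphisms of $\bR^{i}\fQl$. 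No amount of ``middle extension'' or ``no bad subquotient'' cleanup that you get from the decomposition theorem will give an honest identity of operators, because nilpotent discrepancies supported on the bad locus are precisely what semisimplification discards.

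The missing idea is a \emph{decoupling of the two marked points}. The paper first treats the ordinary Hitchin fibration $\fHit$, where $\bR\fHQl\boxtimes\const{X}$ really is a box product, so the $\cent$-action at each point $x\in X$ is an honest family of endomorphisms of the \emph{same} sheaf $\bR^{i}\fHQl$, and local constancy lets one evaluate at a generic $x$ where the factorization through $\pi_0(\calP)$ is clear from the definitions. For the parabolic fibration this fails because the parabolic marking $x$ is part of the moduli problem, so the complex is not a box product. The paper fixes this by introducing a second point $y$ at which Hecke modification is performed, keeping the Borel reduction at $x$: over $\calB\times(\Xz)^2$ the complex becomes $\hfpar_{!}\Ql\boxtimes\const{\Xz}$, a genuine box product in the second coordinate, and the Hitchin-case argument applies. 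One then restricts to the diagonal $x=y$ and checks --- via the property (G-2) / graph-like formalism for cohomological correspondences --- that the diagonal restriction of the two-point $\cent$-action coincides with the usual one-point $\cent$-action on $\hfpar_{!}\Ql$. This is the step your proposal has no substitute for. Finally, note that your proposal does not distinguish $\homog{i}{\Spr_\gamma}$ from $\cohoc{i}{\Spr_\gamma}$; the paper proves the conjecture outright only for compactly supported cohomology (via the embedding of $\cohoc{*}{\Spr_\gamma}$ into the cohomology of a rigidified parabolic Hitchin fiber obtained by ``pulling apart components'' of the product formula), and for homology proves only a filtered version, using a $\bR\Hom_{\Ql[\Lambda]}(-,\Ql[\Lambda])$ duality and the resulting spectral sequence. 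Your ``injective in each degree'' desideratum is reasonable for $\cohoc{*}$ but would need the Künneth-plus-top-cohomology-of-$Y^{\reg}$ argument to be realized, and has no direct homology analogue.
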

In fact, the $\pi_{0}(G_{\gamma}(F))$-action factors through a further quotient $\pi_{0}(P_{a(\gamma)})$, where $P_{a(\gamma)}$ is a certain quotient of $G_{\gamma}(F)$ (see \S\ref{ss:localPic}).

The difficulty in proving this conjecture lies in the fact that we do not know an effective way of computing the action of $\cent$: Lusztig's construction of the $\Wa$-action only tells us how each simple reflection acts, but elements in $\cent$ are in general sums of complicated words in simple reflections.

The main purpose of this paper is to prove

\begin{theorem}[Local Main Theorem]\label{th:mainloc} Conjecture \ref{conj} holds for $\cohoc{i}{\Spr_{\gamma}}$.
\end{theorem}

For the homology part of the conjecture, we prove a weaker statement.

\begin{theorem}\label{th:mainhomo}
Under the conditions of Conjecture \ref{conj}, there exists a filtration $\Fil^{p}$ on $\homog{i}{\Spr_{\gamma}}$, stable under both $\tilW$ and $\pi_{0}(G_{\gamma}(F))$, such that the action of $\cent$ on $\Gr^{p}_{\Fil}\homog{i}{\Spr_{\gamma}}$ factors through the action of $\pi_{0}(G_{\gamma}(F))$ via the homomorphism \eqref{localcentpi}.

Moreover, one may choose $\Fil^{p}$ such that $\Gr_{\Fil}^{p}\homog{i}{\Spr_{\gamma}}=0$ unless $0\leq p\leq r$, where $r$ is the split rank of the $F$-torus $G_{\gamma}$.
\end{theorem}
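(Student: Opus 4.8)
The plan is to bootstrap from Theorem~\ref{th:mainloc}, whose assertion about $\cohoc{i}{\Spr_\gamma}$ I take as given; the one genuinely new issue is that $\Spr_\gamma$ need not be proper, so $\homog{i}{\Spr_\gamma}$ cannot simply be dualized into $\cohoc{i}{\Spr_\gamma}$, and this is exactly what forces a filtration into the statement. First I would dispose of the anisotropic case: if $G_\gamma$ is anisotropic over $F$, so $r=0$, then $\Spr_\gamma$ is a projective scheme, Poincar\'e--Verdier duality identifies $\homog{i}{\Spr_\gamma}$ with $\cohoc{i}{\Spr_\gamma}^{\vee}$, and this identification carries the $\tilW$- and $\pi_0(G_\gamma(F))$-actions on one side to the transposes --- with respect to the standard anti-involutions $w\mapsto w^{-1}$ --- of those on the other. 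Since $\cent$ is commutative and $\sigma_\gamma$ is compatible with the relevant anti-involutions, the property of ``factoring through $\sigma_\gamma$'' is self-dual, so Theorem~\ref{th:mainloc} immediately gives the statement here, with the trivial filtration.

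For general $\gamma$ I would globalize, which is where deforming points on a curve enters: one produces a smooth projective curve $X$, a parabolic point $x_0\in X(k)$ with $\widehat{\calO}_{X,x_0}\cong\calO_F$, and an anisotropic point $a$ of the parabolic Hitchin base whose local Higgs field at $x_0$ recovers $\gamma$ and which is as generic as possible elsewhere. Then $\fpar$ is proper near $a$, $\cohog{*}{\Mpar_a}$ carries the perverse (Leray) filtration associated with $\fQl$, and the global main theorem (established in the preceding sections) says that on the associated graded the spherical part $\cent$ of the global Springer action factors through the Picard-type symmetry group $\calP_a$ acting on the parabolic Hitchin fibers. The product formula expresses $\Mpar_a$, up to a finite quotient and a proper ``Prym'' factor whose cohomology carries only the $\calP_a$-symmetry and no essential $\tilW$-symmetry, in terms of the affine Springer fiber at $x_0$; applying Poincar\'e--Verdier duality on the \emph{proper} scheme $\Mpar_a$ one recovers $\homog{*}{\Spr_\gamma}$ (up to shift) from $\cohog{*}{\Mpar_a}$. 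When $r=0$ this is a clean identification onto a direct summand; when $r>0$ it holds only after passing to the associated graded of a filtration $\Fil^{\bullet}$ on $\homog{i}{\Spr_\gamma}$ induced from the perverse filtration of $\fQl$. The length of $\Fil^{\bullet}$ is at most $r+1$ because the non-properness of $\Spr_\gamma$ is concentrated in the $r$ directions of the split cocharacter lattice $\Lambda:=\xcoch(A_\gamma)$ ($A_\gamma$ the maximal split subtorus of $G_\gamma$), which acts by translations with projective quotient $\Lambda\backslash\Spr_\gamma$ (Kazhdan--Lusztig, Bezrukavnikov), and this bounds the perverse-degree spread of the local contribution at $x_0$ by $r$; hence $\Gr^p_{\Fil}=0$ unless $0\le p\le r$.

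The filtration $\Fil^{\bullet}$ is $\tilW$- and $\pi_0(G_\gamma(F))$-stable because the perverse filtration of $\fQl$ is stable under the global $\tilW$-action and the $\calP_a$-action, and these restrict to the local $\tilW$- and $\pi_0(G_\gamma(F))$-actions ($\Lambda\subset\pi_0(G_\gamma(F))$ matching the $\Lambda$-translations, which commute with the Hecke correspondences defining the $\tilW$-action). On each $\Gr^p_{\Fil}$, the global main theorem together with the facts that $\calP_a$ acts through a quotient identifiable with $\pi_0(P_{a(\gamma)})$ and that the global spherical-part map restricts to $\sigma_\gamma$ yields that $\cent$ acts through $\pi_0(G_\gamma(F))$ via $\sigma_\gamma$, as desired.

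The main obstacle is the product-formula comparison: making it precise and equivariant for the Hecke ($\tilW$-) symmetry at $x_0$ and the Picard symmetry simultaneously, and compatible with the algebra maps $\sigma$ --- concretely, checking that the abstractly defined $\sigma_\gamma$ of \eqref{localcentpi} is the local shadow of the global spherical-part map, and that Lusztig's $\tilW$-action is genuinely compatible with the $\Lambda$-action at the level of the defining correspondences so that $\Fil^{\bullet}$ is $\tilW$-stable with the claimed graded pieces. It is precisely the possible non-degeneration of the perverse (equivalently, the $\Lambda$-Leray) spectral sequence that blocks upgrading the conclusion from $\Gr_{\Fil}$ to $\homog{i}{\Spr_\gamma}$ itself, which is why one obtains here only the weaker, graded form of the conjecture.
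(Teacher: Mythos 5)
Your proposal diverges substantially from the paper's proof, and the divergence matters: the paper does \emph{not} re-globalize to prove Theorem~\ref{th:mainhomo}. Having established Theorem~\ref{th:mainloc}, the paper gives a purely local argument. It takes the free abelian group $\Lambda\subset G_\gamma(F)$ (of rank $r$) acting freely on $\Spr_\gamma$ with proper quotient, and uses a $\Lambda$-covering $Y\times\Lambda\to\Spr_\gamma$ to build a simplicial resolution; cohomological descent upgrades $\cohoc{*}{\Spr_\gamma}$ and $\homog{*}{\Spr_\gamma}$ to objects of $D^{b}(\Ql[\Lambda]\text{-mod})$ represented by complexes of free $\Ql[\Lambda]$-modules, which are literally dual to each other term by term. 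This produces a canonical isomorphism $\homog{*}{\Spr_\gamma}\cong\bR\Hom_\Lambda(\cohoc{*}{\Spr_\gamma},\Ql[\Lambda])$, equivariant for $\tilW\times\pi_0(P_{a(\gamma)})$, and the filtration of the theorem is the one coming from the $\Ext$-spectral sequence $E_2^{-p,-q}=\Ext^{-p}_\Lambda(\cohoc{q}{\Spr_\gamma},\Ql[\Lambda])\Rightarrow\homog{p+q}{\Spr_\gamma}$. The bound $0\le p\le r$ is then immediate because $\Ql[\Lambda]$ has global dimension $r=\rk\Lambda$, and the factoring of the $\cent$-action on each graded piece is inherited directly from the already-proved $\cohoc{*}$-case.

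Your $r=0$ observation (Poincar\'e--Verdier duality plus transposing the actions) is correct and is exactly the degenerate, $\Lambda=0$ case of the above. But your treatment of $r>0$ has a genuine gap. First, you never explain how $\homog{*}{\Spr_\gamma}$ is to be \emph{extracted} from $\cohog{*}{\Mpar_a}$: the product formula \eqref{prod} packages $\Spr_{a,0}$ as a factor of a twisted product under a quotient by $P_a^{\ker}$, and the recovery step used in \S\ref{ss:prod} for $\cohoc{*}$ (open embedding plus $j_!$ plus top-degree fundamental class) has no obvious analogue producing a filtration on $\homog{*}{\Spr_\gamma}$ from a filtration on $\cohog{*}{\Mpar_a}$. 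Second, the claimed bound $0\le p\le r$ is asserted, not derived: the perverse filtration of $\fQl$ has length governed by $\dim\Mpar_a$, not by the local split rank $r$, and the sentence ``this bounds the perverse-degree spread of the local contribution at $x_0$ by $r$'' is doing the entire work of the theorem without a supporting argument. You do identify the correct underlying phenomenon --- the filtration really does come from the $\Lambda$-direction, which is why you call it the ``$\Lambda$-Leray'' spectral sequence --- but routing it through the global perverse filtration adds a layer you cannot control, whereas working directly with the $\Ql[\Lambda]$-module structure on $\cohoc{*}{\Spr_\gamma}$ gives both the equivariant filtration and the sharp length bound for free. In short: the $\Lambda$-module duality should replace, not supplement, the second globalization; with that substitution your $r=0$ reasoning extends uniformly to all $r$.
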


The above conjecture and results have a parahoric version. For each parahoric subgroup $\bP\subset G(F)$ we have the affine partial flag variety $\Fl_{\bP}=G(F)/\bP$ and affine partial Springer fibers $\Spr_{\bP,\gamma}$. The subalgebra $\one_{\bP}\Ql[\tilW]\one_{\bP}\cong\Ql[\xcoch(T)]^{W_{\bP}}$ acts on $\homog{*}{\Spr_{\bP,\gamma}}$ and $\cohoc{*}{\Spr_{\bP,\gamma}}$ (see \S\ref{ss:parahoric}).

\begin{prop} Let $\bP\subset G(F)$ be any parahoric subgroup. Let $\gamma\in\frg(F)$ be a regular semisimple element. If Conjecture \ref{conj} holds for $\homog{i}{\Spr_{\gamma}}$ (resp. $\cohoc{i}{\Spr_{\gamma}}$), then the action of $\cent\subset\one_{\bP}\Ql[\tilW]\one_{\bP}$ on $\homog{i}{\Spr_{\bP,\gamma}}$ (resp. $\cohoc{i}{\Spr_{\bP,\gamma}}$) factors through the action of $\pi_{0}(G_{\gamma}(F))$ via the homomorphism \eqref{localcentpi}.
\end{prop}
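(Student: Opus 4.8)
The plan is to bootstrap from the Iwahori case by realizing the $\bP$-version of local Springer theory inside the $\bI$-version through the central idempotent $\one_{\bP}=|W_{\bP}|^{-1}\sum_{w\in W_{\bP}}w\in\Ql[\tilW]$. First I would consider the natural proper projection $\pi\colon\Fl_{G}=\Fl_{\bI}\to\Fl_{\bP}$; it restricts to a proper, $G_{\gamma}(F)$-equivariant morphism $\pi_{\gamma}\colon\Spr_{\gamma}\to\Spr_{\bP,\gamma}$ whose fibres are Springer fibres in the reductive quotient of $\bP$. Using $\pi_{\gamma}$ (proper pushforward, Gysin pullback, and the correspondences defining the simple reflections of $W_{\bP}$), the comparison set up in \S\ref{ss:parahoric} identifies $\cohoc{*}{\Spr_{\bP,\gamma}}$ with the subspace $\one_{\bP}\cohoc{*}{\Spr_{\gamma}}\subseteq\cohoc{*}{\Spr_{\gamma}}$ — possibly after a cohomological shift and Tate twist coming from $\dim(\bP/\bI)$, which I suppress below since they commute with every operator in sight — in such a way that the intrinsic $\one_{\bP}\Ql[\tilW]\one_{\bP}$-action on $\cohoc{*}{\Spr_{\bP,\gamma}}$ goes over to the action of the subalgebra $\one_{\bP}\Ql[\tilW]\one_{\bP}\subseteq\Ql[\tilW]$ on $\one_{\bP}\cohoc{*}{\Spr_{\gamma}}$. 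Since $\pi_{\gamma}$, and hence all the operations above, are $G_{\gamma}(F)$-equivariant, this identification is also equivariant for $\pi_{0}(G_{\gamma}(F))$; and the homomorphism $\sigma_{\gamma}$ of \eqref{localcentpi}, depending only on $\gamma$, is literally the same for $\bI$ and for $\bP$. Exactly the same discussion applies to $\homog{*}{-}$ in place of $\cohoc{*}{-}$.

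Granting this, the deduction is a formal computation with the central idempotent. Fix $i$, and let $V=\cohoc{i}{\Spr_{\gamma}}$ (up to the suppressed shift). Because $\cent$ is the \emph{center} of $\Ql[\tilW]$, it commutes with $\one_{\bP}$; in particular $z=\one_{\bP}z\one_{\bP}$ for $z\in\cent$, so multiplication by $z$ preserves the subspace $\one_{\bP}V$ and acts there as the restriction of its action on $V$. Likewise $\pi_{0}(G_{\gamma}(F))$ commutes with $\Ql[\tilW]$, hence with $\one_{\bP}$, so $\one_{\bP}V$ is $\pi_{0}(G_{\gamma}(F))$-stable. Now invoke the hypothesis: Conjecture \ref{conj} for $\cohoc{i}{\Spr_{\gamma}}$ says that the map $z\mapsto z|_{V}$ is the composite $\cent\xrightarrow{\sigma_{\gamma}}\Ql[\pi_{0}(G_{\gamma}(F))]\to\End(V)$. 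Restricting this identity of operators to the $\pi_{0}(G_{\gamma}(F))$-stable subspace $\one_{\bP}V$ shows that the $\cent$-action on $\one_{\bP}V$ factors through $\pi_{0}(G_{\gamma}(F))$ via $\sigma_{\gamma}$. Transporting through the identification $\cohoc{i}{\Spr_{\bP,\gamma}}\cong\one_{\bP}V$ of the previous paragraph — which matches the $\cent\subseteq\one_{\bP}\Ql[\tilW]\one_{\bP}$-action on the left with multiplication by $z$ on the right, and the $\pi_{0}(G_{\gamma}(F))$-action on the left with that on the right — yields precisely the assertion for $\cohoc{i}{\Spr_{\bP,\gamma}}$. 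The case of $\homog{i}{\Spr_{\bP,\gamma}}$ is identical.

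The main obstacle is the first paragraph, i.e. the bookkeeping behind the comparison of \S\ref{ss:parahoric}: one must know that a \emph{single} isomorphism between $\cohoc{*}{\Spr_{\bP,\gamma}}$ and $\one_{\bP}\cohoc{*}{\Spr_{\gamma}}$ is simultaneously compatible with the spherical part (indeed with the whole $\one_{\bP}\Ql[\tilW]\one_{\bP}$-action) and with the geometric $\pi_{0}(G_{\gamma}(F))$-action. The substantive point hidden here is that the $W_{\bP}$-part of Lusztig's $\tilW$-action on $\cohoc{*}{\Spr_{\gamma}}$ coincides with the fibrewise Springer action attached to $\pi_{\gamma}$, so that taking $\one_{\bP}$-invariants amounts geometrically to pushing forward along $\pi_{\gamma}$ (the Springer sheaf having constant-sheaf invariants under the finite Weyl group); once this — together with the attendant shift and twist from $\dim(\bP/\bI)$ — is pinned down, no further geometric input on $\Spr_{\bP,\gamma}$ is required.
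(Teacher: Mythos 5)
Your proposal is correct and takes essentially the same route as the paper: \S\ref{ss:parahoric} already records $\cohoc{*}{\Spr_{\bP,\gamma}}=\cohoc{*}{\Spr_\gamma}^{W_\bP}=\one_\bP\cohoc{*}{\Spr_\gamma}$ (and likewise for $\homog{*}{-}$, with no shift or twist needed), an identification that by construction respects both the $\one_\bP\Ql[\tilW]\one_\bP$-action and the $\pi_0(G_\gamma(F))$-action, and the rest is precisely your idempotent computation. One minor slip worth flagging: $z=\one_\bP z\one_\bP$ is false for $z\in\cent$ (take $z=1$); centrality gives $z\one_\bP=\one_\bP z=\one_\bP z\one_\bP$, which is the image of $z$ under $\cent\hookrightarrow\one_\bP\Ql[\tilW]\one_\bP$ and is all you actually use to conclude that $z$ preserves $\one_\bP V$ and acts there by restriction.
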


In fact, the natural projection $\Spr_{\gamma}\to\Spr_{\bP,\gamma}$ induces a {\em surjection} $\homog{*}{\Spr_{\gamma}}\twoheadrightarrow\homog{*}{\Spr_{\bP,\gamma}}$ and an {\em injection} $\cohoc{*}{\Spr_{\bP,\gamma}}\hookrightarrow\cohoc{*}{\Spr_{\gamma}}$ , which are easily seen to be equivariant under both $\cent$ and $\pi_{0}(G_{\gamma}(F))$ by construction in \S\ref{ss:parahoric}.

Surprisingly, Theorem \ref{th:mainloc} is deduced from its global counterpart, which we state next. Fix a connected smooth projective algebraic curve $X$ over $k$. In \cite[Definition 2.1.1]{GS} we have defined the parabolic Hitchin moduli stack $\Mpar$ classifying quadruples $(x,\calE,\varphi,\calE^{B}_{x})$ where $x\in X$, $\calE$ is a $G$-bundle over $X$, $\varphi$ is a section of the twisted adjoint bundle $\Ad(\calE)\otimes\calO_{X}(D)$ ($D$ is a fixed divisor on $X$ with large degree) and $\calE^{B}_{x}$ is a Borel reduction of $\calE$ at $x$ preserved by the Higgs field $\varphi$. We have the parabolic Hitchin fibration
\begin{equation*}
\fpar:\Mpar\to\AHit\times X
\end{equation*}
recording the characteristic polynomial of $\varphi$ and the point $x$. One of the main results of \cite{GS} is that there exists a natural action of the extended affine Weyl group $\tilW$ on the derived direct image complex $\bR\fQl|_{\Aa\times X}$, where $\Aa\subset\AHit$ is the {\em anisotropic} locus (see \cite[\S 4.10.5]{NgoFL}). In this paper, we will extend this construction to a larger locus $\Ah\times X$, where $\Ah\subset\AHit$ is the {\em hyperbolic} locus (see \cite[\S4.5]{NgoFL}), containing $\Aa$ as an open subset. 

On the other hand, Ng\^o defined a Picard stack $\calP$ over $\Ah$ which acts on $\Mpar$ fiber-wise over $\Ah\times X$. This action induces an action of the sheaf of groups $\pi_{0}(\calP/\Ah)$ (which interpolates the component groups $\pi_{0}(\calP_{a})$ for $a\in\Ah$) on $\bR\fQl$. The study of this action in the case of the usual Hitchin moduli space $\MHit$ leads to the geometric theory of endoscopy, which plays a crucial role in Ng\^o's proof of the Fundamental Lemma \cite{NgoFL}. The idea of relating the $\pi_0(\calP/\Ah)$-action and the $\cent$-action on $\bR\fQl$ was also suggested to the author by Ng\^{o}.

\begin{theorem}[Global Main Theorem]\label{th:mainpar}
For any $i\in\ZZ_{\geq0}$, the spherical part of the $\tilW$-action on the cohomology sheaves of the parabolic Hitchin complex $\bR\fQl$:
\begin{equation*}
\cent\to\End(\bR^{i}\fQl|_{(\Ah\times X)'})
\end{equation*} 
factors through the action of the sheaf $\pi_{0}(\calP/\Ah)$ via a natural homomorphism of sheaves of algebras on $\Ah$:
\begin{equation}\label{globalcentpi}
\sigma:\cent\otimes\const{\Ah}\to\Ql[\pi_{0}(\calP/\Ah)].
\end{equation}
\end{theorem}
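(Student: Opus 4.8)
The plan is to reduce the statement to a calculation over the regular semisimple (and, within it, the generic) locus, using the fact that the relevant sheaves are controlled by their behaviour on a dense open set. First I would work over the open subset $\tcArs$ (or its image in $\Ah\times X$) where the Higgs field is everywhere regular semisimple and the Borel reduction $\calE^B_x$ is the unique one compatible with $\varphi$: there the parabolic Hitchin fibration restricted to this locus is a torsor under (an open part of) $\calP$, so $\bR^i\fQl$ is, up to shift, the pushforward from a $\calP$-torsor and the $\tilW$-action and the $\pi_0(\calP/\Ah)$-action can both be made completely explicit. Concretely, over $\tcArs$ one has a finite étale $\tilW$-cover (the cameral-type cover $\tcBrs\to\tcArs$), the $\tilW$-action on cohomology is the one coming from deck transformations of this cover, and the map $\sigma$ in \eqref{globalcentpi} is induced by the natural map $\tcBrs \to \pi_0(\calP/\Ah)$-torsor; the factorization of $\cent = \Ql[\xcoch(T)]^W$ through $\pi_0$ is then a direct computation with the group algebra of $\tilW$ acting on functions on the cover. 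This handles the theorem over the dense open locus.

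Next I would propagate this from the open locus to all of $(\Ah\times X)'$. The key input here is that $\bR\fQl|_{\Ah\times X}$, after the relevant shift, is a pure perverse sheaf (or a direct sum of shifted simple perverse sheaves) by Ngô's support theorem and the decomposition theorem applied in the setting of \cite{GS}, so any morphism of sheaves defined on a dense open subset — in particular the difference between the $\cent$-action and its would-be factorization through $\pi_0(\calP/\Ah)$ — extends uniquely, and vanishing on the open locus forces vanishing everywhere. I would phrase this as: both $\cent \to \End(\bR^i\fQl)$ and the composite $\cent \to \Ql[\pi_0(\calP/\Ah)] \to \End(\bR^i\fQl)$ are maps of sheaves of algebras, they agree over $\tcArs$, the source $\cent\otimes\const{\Ah}$ is a constant sheaf, and $\bR^i\fQl$ has no subquotient supported on the boundary that is not already "seen" generically; hence the two maps agree. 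Some care is needed to ensure the homomorphism $\sigma$ of \eqref{globalcentpi} is itself well-defined as a map of sheaves over all of $\Ah$ and not merely over the regular semisimple locus — this requires knowing that $\pi_0(\calP/\Ah)$ is the "correct" receptacle, i.e. that the generic description of $\sigma$ glues; this should follow from Ngô's description of $\calP$ and its component group sheaf together with the compatibility of the cameral cover construction.

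The main obstacle I anticipate is precisely the interface between the local model over $\tcArs$ and the global structure theory: one must check that the extended affine Weyl group action constructed in \cite{GS} over $\Aa$ (and extended here to $\Ah$) really does restrict, over the regular semisimple locus, to the "obvious" deck-transformation action for which the $\cent$-factorization is transparent — this is a compatibility of two a priori different constructions of the same $\tilW$-action and is likely where the technical heart of the argument lies. A secondary difficulty is handling the hyperbolic (non-anisotropic) part of $\Ah$: there the Hitchin fibers are not proper, so one cannot invoke the decomposition theorem in its usual form, and the propagation step must instead be carried out either on a suitable partial compactification or by a direct local analysis near the hyperbolic locus. Once these two points are settled, the deduction of the local statement (Theorem \ref{th:mainloc}) from the global one proceeds, as advertised in the introduction, by deforming a point on the curve $X$ to relate $\Spr_\gamma$ with the local structure of a parabolic Hitchin fiber, matching up both the $\tilW$-actions and the component-group actions under the resulting comparison isomorphism.
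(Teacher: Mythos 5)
Your first step — making everything explicit over the regular semisimple locus, where the fiber is essentially a torsor under $\calP$ and both the $\tilW$-action and the $\pi_0(\calP)$-action are transparent — is indeed where the paper starts, and you correctly flag that a compatibility between the Hecke-correspondence construction and the generic deck-transformation picture must be checked (this is \cite[Lemma 3.2.6]{GS} and the explicit description via \eqref{actionlam}). The trouble is in your propagation step, and it is more than a technical gap.

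Using the decomposition theorem / Ng\^o's support theorem to extend an identity of actions from a dense open subset is exactly the route the paper \emph{declines} to take, and it explains why at the end of the introduction: that argument only shows that the \emph{semisimplification} of the $\cent$-action factors through $\pi_0(\calP/\Aa)$. The subtlety is that $\bR^i\fQl$ is an ordinary constructible sheaf (a cohomology sheaf, not a perverse constituent), its endomorphism ring is not controlled by the generic locus, and even at the level of the full complex $\bR\fQl$, a morphism agreeing generically with the wanted factorization can differ by a component supported on a boundary stratum; the decomposition into shifted IC sheaves does not by itself kill such a component. Moreover, as you yourself note, properness fails over $\Ah\setminus\Aa$, so the decomposition theorem is simply unavailable there; appealing to a "suitable partial compactification" or "direct local analysis" is a promise, not an argument.

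The paper's actual propagation mechanism spreads out in the $X$-direction, not the $a$-direction. One first proves Theorem \ref{th:Hit} for $\fHit$: the $\cent$-action on $\bR^i_S\hfH_{!}\Ql\boxtimes\const{\Xz}$ is necessarily of the form $\alpha_0\otimes\id$ (because the second tensor factor is constant and the action is linear over it), so the action is determined by its effect at one point $x\in X$, which can be taken in the regular semisimple locus. This "homotopy invariance in $x$" replaces the perversity argument entirely and does not require properness. Then, to pass from $\fHit$ to $\fpar$, you also need the device you have omitted altogether: the two-point Hecke correspondence $\hHk_2$, which separates the point of Borel reduction from the point of Hecke modification. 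One checks (Lemmas \ref{l:twoptaction}, \ref{l:twofactor}) that the resulting $\cent$-action on $\hfpar_{!}\Ql\boxtimes\const{\Xz}$ factors through $\pi_0$ by the same homotopy argument, and then one compares the restriction of this action to the diagonal $\Delta_{\Xz}$ with the original $\cent$-action via pullback of cohomological correspondences and Property (G-2) (Lemma \ref{l:grlike}, Lemma \ref{l:openpart}). Without this two-point device, there is no way to apply the deformation-in-$x$ argument to $\Mpar$, because the parabolic structure is tied to $x$. So the proposal is missing both the correct propagation principle and the key geometric construction that makes it applicable in the parabolic case.
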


Here, $(\Ah\times X)'\subset\Ah\times X$ is any open subset on which a certain codimension estimate holds (see \cite[Proposition 2.6.3, Remark 2.6.4]{GS}). If char$(k)$=0, we may take $(\Ah\times X)'=\Ah\times X$ (see \cite[p.4]{NgoDe}).

A consequence of Theorem \ref{th:mainpar} is
\begin{cor}
For any geometric point $(a,x)\in(\Ah\times X)'$, the action of $\cent$ on $\cohoc{i}{\Mpar_{a,x}}$ factors through the action of $\pi_{0}(\calP_{a})$ via the stalk of the homomorphism \eqref{globalcentpi} at $a$.
\end{cor}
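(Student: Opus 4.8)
The plan is to deduce the statement at a geometric point $(a,x)$ from the sheaf-theoretic Theorem~\ref{th:mainpar} by passing to the stalk and relating the stalk of $\bR\fQl$ to the compactly supported cohomology of the parabolic Hitchin fiber $\Mpar_{a,x}$. First I would recall that for the parabolic Hitchin fibration $\fpar:\Mpar\to\AHit\times X$ the proper base change theorem gives a canonical isomorphism between the stalk of $\bR^{i}\fQl$ at the geometric point $(a,x)\in(\Ah\times X)'$ and $\cohoc{i}{\Mpar_{a,x}}$. (One should note whether $\fpar$ is proper over the relevant locus; over the anisotropic locus properness is classical, and over the hyperbolic locus one works with the version of $\bR\fQl$ that has been extended in the body of the paper, so the identification of the stalk with $\cohoc{i}{\Mpar_{a,x}}$ is the correct statement to quote — this is why the corollary is phrased with $\cohoc{i}{}$ rather than ordinary cohomology.)

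The second step is to check that all three structures in play — the $\cent$-action coming from the $\tilW$-action on $\bR\fQl$, the $\pi_{0}(\calP/\Ah)$-action, and the homomorphism $\sigma$ of \eqref{globalcentpi} — are compatible with this stalk identification. The $\tilW$-action on $\bR\fQl$ is an action of a discrete group on a complex of sheaves, so it specializes at every geometric point to an action on the stalk, and under proper base change this is precisely the $\tilW$-action on $\cohoc{*}{\Mpar_{a,x}}$; hence the spherical part $\cent\to\End(\bR^{i}\fQl|_{(\Ah\times X)'})$ specializes to $\cent\to\End(\cohoc{i}{\Mpar_{a,x}})$. Likewise, the sheaf of groups $\pi_{0}(\calP/\Ah)$ has stalk $\pi_{0}(\calP_{a})$ at $a$, and its action on $\bR\fQl$ (induced from the fiberwise $\calP$-action on $\Mpar$) specializes to the $\pi_{0}(\calP_{a})$-action on $\cohoc{i}{\Mpar_{a,x}}$. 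Finally, the sheaf homomorphism $\sigma:\cent\otimes\const{\Ah}\to\Ql[\pi_{0}(\calP/\Ah)]$ has a well-defined stalk $\sigma_{a}:\cent\to\Ql[\pi_{0}(\calP_{a})]$ at $a$.

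With these compatibilities in hand, the corollary is immediate: taking the stalk at $(a,x)$ of the factorization asserted in Theorem~\ref{th:mainpar}, the map $\cent\to\End(\cohoc{i}{\Mpar_{a,x}})$ factors as $\cent\xrightarrow{\sigma_{a}}\Ql[\pi_{0}(\calP_{a})]\to\End(\cohoc{i}{\Mpar_{a,x}})$, which is exactly the claim. I do not expect any genuine obstacle here; the only point requiring care is the bookkeeping in the first step — making sure the proper base change isomorphism is stated for the correct extended complex on the hyperbolic locus and that it intertwines the group actions on the nose rather than merely up to isomorphism. Once the identification of stalks is fixed functorially (which it is, being proper base change applied to a morphism equivariant for both $\tilW$ and $\calP$), the rest is formal.
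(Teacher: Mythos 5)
Your proposal is correct and matches the paper's (implicit) reasoning: the corollary is stated as an immediate consequence of Theorem~\ref{th:mainpar}, obtained by taking the stalk at $(a,x)$ of the sheaf-level factorization. One small terminological point: the identification $(\bR^{i}\fQl)_{(a,x)}\cong\cohoc{i}{\Mpar_{a,x}}$ follows from the base change theorem for $f_{!}$ (valid for all finite-type morphisms), not from proper base change; indeed $\fpar$ need not be proper over the hyperbolic locus, which is exactly why the complex is built from $f^{\parab}_{!}$ and the corollary is phrased in terms of $\cohoc{i}{}$.
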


We also have a version of Theorem \ref{th:mainpar} for parahoric Hitchin fibrations. Let us spell out the case of the usual Hitchin fibration $f^{\Hit}:\MHit\to\AHit$. Extending \cite[Theorem 6.6.1]{GS} to the locus $\Ah$, we have an action of $\cent$ on the complex $\bR\fHQl\boxtimes\const{X}$ on $\Ah\times X$.

\begin{theorem}\label{th:Hit}
The action of $\cent$ on the sheaf $\bR^{i}\fHQl\boxtimes\const{X}$ factors through the action of the sheaf of algebras $\Ql[\pi_{0}(\calP/\Ah)]$ via the homomorphism \eqref{globalcentpi}.
\end{theorem}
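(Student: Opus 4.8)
The plan is to deduce Theorem~\ref{th:Hit} from the Global Main Theorem (Theorem~\ref{th:mainpar}) by exhibiting $\bR\fHQl\boxtimes\const{X}$ as a direct summand of the parabolic Hitchin complex $\bR\fQl$ on $\Ah\times X$, in a way compatible with both the $\cent$-action and the $\pi_0(\calP/\Ah)$-action; this is the non-parahoric global analogue of how the parahoric Proposition above is obtained from Conjecture~\ref{conj}. Concretely, I would first recall the forgetful morphism $\nu\colon\Mpar\to\MHit\times X$ dropping the Borel reduction $\calE^{B}_{x}$: it is proper, an isomorphism over the locus where $\varphi(x)$ is regular semisimple with Springer fibers as its general fibers, and $\calP$-equivariant for the $\calP$-action that is trivial on the $X$-factor. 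By the $\Ah$-extension of \cite[Theorem 6.6.1]{GS}, pushing $\bR\nu_{*}\Ql$ forward along $\fHit\times\id$ identifies $\bR\fHQl\boxtimes\const{X}$ with the summand $e\cdot\bR\fQl$, where $e\in\Ql[W]\subset\Ql[\tilW]$ is the idempotent onto the relevant $W$-isotypic component; this identification is what \emph{defines} the $\cent$-action on $\bR\fHQl\boxtimes\const{X}$ in the first place.

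Next I would check two compatibilities. Every element of $\cent=\Ql[\xcoch(T)]^{W}$ is $W$-invariant and therefore commutes with all of $\Ql[W]$ inside $\Ql[\tilW]$, in particular with $e$; hence the $\cent$-action on $\bR\fQl$ preserves $e\cdot\bR\fQl$ and restricts there to the action on $\bR\fHQl\boxtimes\const{X}$ just described. Separately, the $\tilW$-action and the $\calP$-action on $\bR\fQl$ commute and $\nu$ is $\calP$-equivariant, so the $\calP$-action preserves each $W$-isotypic summand, the identification above is $\pi_0(\calP/\Ah)$-equivariant, and the homomorphism $\sigma$ of \eqref{globalcentpi} is the same for the summand as for the ambient complex. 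Granting these, Theorem~\ref{th:mainpar} applies: taking $i$-th cohomology sheaves of $\bR\fQl\cong(\bR\fHQl\boxtimes\const{X})\oplus(\text{rest})$ and restricting endomorphisms to the first factor, the stated factorization of the $\cent$-action through $\pi_0(\calP/\Ah)$ via $\sigma$ passes verbatim to $\bR^{i}\fHit_{!}\Ql\boxtimes\const{X}$ over $(\Ah\times X)'$. Finally, since this sheaf is pulled back from $\Ah$ and $(\Ah\times X)'$ is a large open of $\Ah\times X$ (all of it when $\operatorname{char}k=0$), I would promote the identity of sheaf homomorphisms to all of $\Ah\times X$ using that $\bR^{i}\fHit_{!}\Ql$ is, on the hyperbolic locus, a direct sum of intersection complexes (decomposition theorem together with Ng\^{o}'s support theorem), so that a morphism of it is determined by its restriction to a dense open.

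I expect the only genuinely delicate point --- as opposed to formal bookkeeping --- to be the input from \cite{GS} itself: having the $\tilW$-action, and the summand identification $\bR\fHQl\boxtimes\const{X}=e\cdot\bR\fQl$ together with its $\pi_0(\calP/\Ah)$-equivariance, available over the whole hyperbolic locus $\Ah$ rather than just the anisotropic locus $\Aa$ where \cite[Theorem 6.6.1]{GS} was originally stated. Once that extension is in place, Theorem~\ref{th:Hit} is a formal corollary of Theorem~\ref{th:mainpar}, exactly parallel to the parahoric Proposition for affine Springer fibers.
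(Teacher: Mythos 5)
Your proposal runs in the opposite direction from the paper: you try to deduce Theorem~\ref{th:Hit} from Theorem~\ref{th:mainpar}, whereas the paper proves Theorem~\ref{th:Hit} first, by a direct argument in \S\ref{ss:proofHit}, and then \emph{reuses the same technique} (not the theorem itself) to establish Lemma~\ref{l:twofactor}, from which Theorem~\ref{th:mainpar} follows. Your route is not literally circular, but it is uneconomical, since proving Theorem~\ref{th:mainpar} requires having the direct argument of Theorem~\ref{th:Hit} in hand anyway; and, more seriously, it leaves a genuine gap in the domain of validity.

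Theorem~\ref{th:mainpar} is stated and proved only over the open subset $(\Ah\times X)'$: its proof goes through cohomological correspondences and needs Property~(G-2) from Lemma~\ref{l:grlike}, which rests on the codimension estimate defining $(\Ah\times X)'$. Theorem~\ref{th:Hit}, in contrast, is claimed over all of $\Ah\times X$; the paper emphasizes this difference immediately after the statement. So your deduction from Theorem~\ref{th:mainpar} gives at best the conclusion over $(\Ah\times X)'$. The fix you propose for extending to $\Ah\times X$ does not work. The decomposition theorem requires a proper morphism, and the Hitchin fibration $\fHit$ is proper only over $\Aa$, not over the whole hyperbolic locus $\Ah$, where the fibers (acted on by the non-proper $\calP_a$) are non-proper. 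Ng\^o's support theorem, as cited in this paper, is likewise only invoked over $\Aa$ and only controls the \emph{semisimplification} of the action. And even where the decomposition theorem applies, it decomposes the full complex $\bR\fHQl$ into shifted intersection complexes; the individual cohomology sheaf $\bR^{i}\fHQl$ is a sum of cohomology sheaves of those intersection complexes, which are in general not themselves intersection complexes, so a sheaf endomorphism of $\bR^{i}\fHQl$ is \emph{not} determined by its restriction to a dense open.

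The paper's actual argument is both simpler and stronger. One observes that the $\cent$-action on $\hfH_{!}\Ql\boxtimes\const{\Xz}$ is parametrized by the auxiliary point of Hecke modification in $\Xz$; applying the adjunction $(p_{!},p^{!})$ and passing to equivariant cohomology sheaves kills the dependence on that point (this is the homotopy-invariance step that produces the map $\alpha_{0}$ of \eqref{definealpha}); and at a regular semisimple point the action visibly arises from translation by the sections $s(\lambda,-)$ of $\hP$, giving the commutative diagram \eqref{commm} and hence \eqref{d:effcoh}. None of this uses a codimension estimate, which is precisely why the conclusion holds over all of $\Ah\times X$. Your proposal does not engage with this homotopy-invariance idea, which is the core of the proof.
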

Note that this theorem is valid on the whole of $\Ah\times X$, rather than just $(\Ah\times X)'$ as in Theorem \ref{th:mainpar}. In the preprint \cite{GSII}, Theorem \ref{th:mainpar} and \ref{th:Hit} were proved over $(\Aa\times X)'$.

\subsection{Application}
In recent work of Bezrukavnikov and Varshavsky, they construct new examples of stable distributions on $p$-adic groups using an affine analog of the relation between character sheaves and the center of the Hecke algebra. When the local field is a function field, a key step in checking the stability of their distributions is Theorem \ref{th:mainloc} and Theorem \ref{th:mainhomo}.

\subsection{Idea of the proof}
The main idea of proving Theorem \ref{th:mainloc} is to view $\calO_{F}$ as the completed local ring of an algebraic curve at one point $x$, and try to deform the point $x$ along the curve $X$. For this, we also need to extend $\gamma\in\frg(F)$ to a $\frg$-valued meromorphic function on $X$. This naturally leads to the consideration of the (parabolic) Hitchin moduli stack, hence leading to Theorem \ref{th:mainpar}.

To prove Theorem \ref{th:mainpar}, we first prove Theorem \ref{th:Hit}. The $\cent$-action on $\bR\fHQl\boxtimes\const{X}$ can be thought of as a family of $\cent$-actions on $\bR\fHQl$ indexed by $x\in X$. Homotopy invariance guarantees that the effect of this action on $\bR^{i}\fHQl$ is independent of $x$. Then we only need to check that the $\cent$-action at a general point $x\in X$ does factor through $\pi_{0}(\calP/\Ah)$, which is clear from the construction in \cite{GS}.

To deduce Theorem \ref{th:mainpar} from Theorem \ref{th:Hit}, we simultaneously deform the point of Borel reduction (which is contained in the moduli problem of $\Mpar$) and the point of Hecke modification (which gives the $\cent$-action). This way we get a result about the $\cent$-action on $\bR\fQl\boxtimes\const{X}$ analogous to Theorem \ref{th:Hit}, but this time our complex lives on $\Ah\times X^{2}$. Restricting to the diagonal $\Ah\times\Delta(X)$, we get the desired factorization in Theorem \ref{th:mainpar}. The idea behind this argument is reminiscent of Gaitsgory's construction of the center of the affine Hecke algebra via nearby cycles (see \cite{Ga}).

Finally, we need to deduce Theorem \ref{th:mainloc} from Theorem \ref{th:mainpar}. We argue that for every affine Springer fiber $\Spr_{\gamma}$, its compactly supported cohomology appears inside the compactly supported cohomology of a certain rigidified parabolic Hitchin fiber, and this inclusion respects the various symmetries. This fact follows from a detailed analysis of Ng\^o's product formula.

We remark that using a parabolic version of Ng\^o's Support Theorem (see \cite{GSLD}), one can deduce a weaker version of Theorem \ref{th:mainpar}, namely the {\em semisimplification} of the $\cent$-action factors through the $\pi_{0}(\calP/\Aa)$-action on $\bR\fQl|_{\Aa\times X}$. 

The proof described above shows that the global Springer theory developed in \cite{GS} can be useful in solving more classical problems about affine Springer fibers.

\subsection{Convention} 
Throughout the paper, $k$ will be an algebraically closed field. The stacks on which we talk about sheaves are of the form $[X/A]$, where $X$ is an algebraic space, locally of finite type over $k$, and $A$ is a linear algebraic group over $k$ which acts on $X$. All complexes of sheaves will be objects in the derived category of $\Ql$-complexes in the \'etale topology. See \cite[\S1.1]{WeilII} for the case of schemes of finite type and \cite{O} for the case of stacks. See Appendix \ref{a:lft} for the convention for sheaves on algebraic spaces which are locally of finite type over $k$. {\bf All sheaf-theoretic functors are understood to be derived without putting $\bR$ or $\bL$ in the front.} For a morphism $f:\calX\to\calY$ between stacks, we use $\DD_{\calX/\calY}$ or $\DD_{f}$ to denote the relative dualizing complex $f^{!}\const{\calY}$. The homology complex of $f$ is defined as
\begin{equation*}
\homo{*}{\calX/\calY}:=f_{!}\DD_{\calX/\calY}.
\end{equation*}
In particular, if $\calY=\Spec k$, we write $\homog{*}{\calX}$ for $\homo{*}{\calX/\Spec k}$.

\subsection{Notations for $G$}\label{ss:G} Let $G$ be a reductive algebraic group over $k$. Fix a maximal torus $T$ of $G$ and a Borel $B$ containing $T$. Let $\frg,\frb,\frt$ be the Lie algebras of $G,B,T$ respectively. Let $(\xch(T),\Phi,\xcoch(T),\Phi^{\vee})$ be the based root and coroot systems determined by $(G,B,T)$. Let $W$ be the Weyl group. Let $\frc=\frg\sslash G=\frt\sslash W$ be the adjoint quotient of $\frg$ in the GIT sense. 

We now make precise the assumption on char $(k)$. Let $h$ be one plus the sum of coefficients of the highest root of $G$ written in terms of simple roots. We assume either char$(k)=0$ or char$(k)>2h$. We impose this condition because we would like to make sure that the Kostant section $\epsilon:\frc\to\frg$ exists, see \cite[\S1.2]{NgoFL}. 

The extended affine Weyl group and the affine Weyl group are defined as
\begin{equation*}
\tilW:=\xcoch(T)\rtimes W; \hspace{1cm} \Wa:=\ZZ\Phi^{\vee}\rtimes W
\end{equation*}
where $\ZZ\Phi^{\vee}\subset\xcoch(T)$ is the coroot lattice. The affine Weyl group $\Wa$ is a Coxeter group with the set of simple reflections $\Delta_{\aff}$. There is an exact sequence
\begin{equation}\label{Wa}
1\to\Wa\to\tilW\to\Omega\to1
\end{equation}
with $\Omega=\xcoch(T)/\ZZ\Phi^{\vee}$ a finite abelian group.

\section{Local Springer action}\label{s:loc}
In this section, we will explain all the ingredients that go into the statement of Conjecture \ref{conj}. 

\subsection{Loop group}\label{ss:aff} Let $F=k((\varpi))$ and $\calO_{F}=k[[\varpi]]\subset F$. For a scheme $X$ over $F$, we use $\hRes^{F}_{k}X$ to denote the functor \{$k$-algebras\}$\to$\{Sets\}:
\begin{equation*}
(\hRes^{F}_{k}X)(R)=X(R((\varpi))).
\end{equation*}
Similarly, if $\calX$ is defined over $\calO_{F}$,we define $\hRes^{\calO_{F}}_{k}\calX$ to be the functor
\begin{equation*}
(\hRes^{\calO_{F}}_{k}\calX)(R)=\calX(R[[\varpi]]).
\end{equation*}
The functor $\hRes^{F}_{k}(G\otimes_{k}F)$ is represented by a group ind-scheme $G((\varpi))$ over $k$, called the {\em loop group} of $G$. The standard Iwahori subgroup $\bI\subset G(\calO_{F})$ is the preimage of $B(k)$ under the evaluation map $G(\calO_{F})\to G(k)$. Standard parahoric subgroups $\bP\supset\bI$ are in bijection with proper subsets of $\Delta_{\aff}$. The parahoric $G(\calO_{F})$ is denoted by $\bG$.

For each parahoric $\bP$ there is a smooth $\calO_{F}$-model $\calG_{\bP}$ of $G\otimes_{k}F$, the Bruhat-Tits group scheme, such that $\calG_{\bP}(\calO_{F})=\bP$. The {\em Lie algebra} $\Lie\bP$ of $\bP$ is, by definition, the Lie algebra of $\calG_{\bP}$, hence a finite free $\calO_{F}$-module. Let $L_{\bP}$ be maximal reductive quotient of the special fiber of $\calG_{\bP}$. This is a connected reductive group over $k$. The functor $\hRes^{\calO_{F}}_{k}\calG_{\bP}$ is represented by a group scheme over $k$. The affine partial flag variety of type $\bP$ is the ind-scheme
\begin{equation*}
\Fl_{\bP}=G((\varpi))/\hRes^{\calO_{F}}_{k}\calG_{\bP}.
\end{equation*}
The set $\Fl_{\bP}(k)$ classifies all parahoric subgroups of $G(F)$ which are conjugate to $\bP$.

Having fixed an Iwahori $\bI$, the exact sequence \eqref{Wa} admits a section $\Omega\to\tilW$ whose image $\Omega_{\bI}$ is the stabilizer of the fundamental alcove corresponding to $\bI$ in the reduced building of $G(F)$. The group $\Omega_{\bI}\subset\tilW$ can also be identified with $N(\bI)/\bI$ where $N(\bI)$ is the normalizer of $\bI$ in $G(F)$.

\subsection{Affine Springer fibers} Let $\gamma\in\frg(F)$ be a regular semisimple element. The sub-ind-scheme $\hSSpr_{\bP,\gamma}\subset G((\varpi))$ is defined to have $R$-points
\begin{equation*}
\hSSpr_{\bP,\gamma}(R)=\{g\in G(R((\varpi)))|\Ad(g^{-1})\gamma\in R\otimes_{k}\Lie\bP\}.
\end{equation*}
Clearly the right $\hRes^{\calO_{F}}_{k}\calG_{\bP}$-action on $G((\varpi))$ preserves $\hSSpr_{\bP,\gamma}$, hence we can define the quotient
\begin{equation*}
\SSpr_{\bP,\gamma}=\hSSpr_{\bP,\gamma}/\hRes^{\calO_{F}}_{k}\calG_{\bP}\subset\Fl_{\bP}.
\end{equation*}
As defined above $\SSpr_{\bP,\gamma}$ is highly non-reduced. Its reduced structure $\Spr_{\bP,\gamma}$ is a scheme locally of finite type, see \cite{KL}. We call $\Spr_{\bP,\gamma}$ the {\em affine Springer fiber of $\gamma$ with type $\bP$}. When $\bP=\bI$, we often omit $\bI$ from subscripts.

\subsection{Lusztig's construction of the $\Wa$-action in \cite{Lu}}\label{ss:L}
Let $\frl_{\bP}$ be the Lie algebra of $L_{\bP}$ and $\till_{\bP}$ be the Grothendieck simultaneous resolution of $\frl_{\bP}$. We have a Cartesian diagram
\begin{equation}\label{SprCart}
\xymatrix{\SSpr_{\gamma}\ar[r]^{\ev_{\gamma}}\ar[d]^{\nu_{\bP}} & [\till_{\bP}/L_{\bP}]\ar[d]^{\pi_{\bP}}\\
\SSpr_{\bP,\gamma}\ar[r]^{\ev_{\bP,\gamma}} & [\frl_{\bP}/L_{\bP}]}
\end{equation}

Apply proper base change to the diagram \eqref{SprCart}, we get
\begin{equation*}
\nu_{\bP*}\DD_{\Spr_{\gamma}}=\ev_{\bP,\gamma}^{!}\pi_{\bP*}\DD_{[\till_{\bP}/L_{\bP}]}.
\end{equation*}
By classical Springer theory for the Lie algebra $\frl_{\bP}$, there is a $W_{\bP}$-action on $\pi_{\bP,*}\DD_{[\till_{\bP}/L_{\bP}]}$ (to see this, we may identify $\DD_{[\till_{\bP}/L_{\bP}]}$ with the constant sheaf on $[\till_{\bP}/L_{\bP}]$). Therefore $\nu_{\bP*}\DD_{\Spr_{\gamma}}$ also carries a $W_{\bP}$-action. Since $\nu_{\bP,\gamma}$ is proper, $\homog{*}{\Spr_{\gamma}}=\bR\Gamma_{c}(\Spr_{\bP,\gamma},\nu_{\bP*}\DD_{\Spr_{\gamma}})$ also carries a $W_{\bP}$-action.

For $\bP\subset\bQ$, Lusztig then argues that the $W_{\bQ}$-action on $\homog{*}{\Spr_{\gamma}}$ restricts to the $W_{\bP}$-action defined both as above. Therefore, these $W_{\bP}$-actions generate a $\Wa$-action on $\homog{*}{\Spr_{\gamma}}$.

Replacing the dualizing complexes by the constant sheaves in the above discussion, we obtain an action of $\Wa$ on $\cohoc{*}{\Spr_{\gamma}}$.

\subsection{The $\Omega_{\bI}$-action}\label{ss:O} Viewing $\Omega_{\bI}$ as the quotient $N(\bI)/\bI$, we get a natural action of $\Omega_{\bI}$ on $\Fl=\Fl_{\bI}$ by right multiplication. We denote this action by $\omega\mapsto R_{\omega}$ for $\omega\in\Omega_{\bI}$. This action preserves $\Spr_{\gamma}$ (because $\Ad(g)\bI=\Ad(g\omega)\bI$ for any $\omega\in N(\bI)$), hence $\omega\in\Omega_{\bI}$ acts on $\homog{*}{\Spr_{\gamma}}$ from the left:
\begin{equation*}
R^{-1}_{\omega,*}:\homog{*}{\Spr_{\gamma}}\to\homog{*}{\Spr_{\gamma}}.
\end{equation*}
Similarly, right multiplication by $\omega\in N(\bI)$ sends one standard parahoric $\bP$ to another standard parahoric $\omega^{-1}\bP\omega$, and gives an isomorphism $\Fl_{\bP,\gamma}\isom\Fl_{\omega^{-1}\bP\omega,\gamma}$. We have a commutative diagram
\begin{equation}\label{Ro}
\xymatrix{\Spr_{\gamma}\ar[r]^{R_\omega}\ar[d]^{\nu_{\bP}} & \Spr_{\gamma}\ar[d]^{\nu_{\omega^{-1}\bP\omega}}\\
\Spr_{\bP,\gamma}\ar[r]^{R_\omega} & \Spr_{\omega^{-1}\bP\omega,\gamma}}
\end{equation}
for any $\omega\in\Omega_{\bI}$. This implies that $R^{-1}_{\omega,*}$ intertwines the action of $W_{\omega^{-1}\bP\omega}$ and of $W_{\bP}$ on $\homog{*}{\Spr_{\gamma}}$, via the isomorphism $\Ad(\omega):W_{\omega^{-1}\bP\omega}\to W_{\bP}$. Similar remarks apply to $\cohoc{*}{\Spr_{\gamma}}$. Summarizing, we get

\begin{theorem} Lusztig's construction in \S\ref{ss:L} and the $\Omega_{\bI}$-action in \S\ref{ss:O} together generate a $\tilW$ action on both $\homog{*}{\Spr_{\gamma}}$ and $\cohoc{*}{\Spr_{\gamma}}$.
\end{theorem}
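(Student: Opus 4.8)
The plan is to use the internal semidirect-product decomposition $\tilW=\Wa\rtimes\Omega_{\bI}$ coming from the section $\Omega\to\tilW$ fixed in \S\ref{ss:aff}. By the universal property of the semidirect product, a $\tilW$-action on a space $V$ is the same as the data of: (i) an action $\rho\colon\Wa\to\Aut(V)$; (ii) an action $\tau\colon\Omega_{\bI}\to\Aut(V)$; and (iii) the relation $\tau(\omega)\rho(w)\tau(\omega)^{-1}=\rho(\leftexp{\omega}{w})$ for $\omega\in\Omega_{\bI}$, $w\in\Wa$, where $\leftexp{\omega}{w}=\omega w\omega^{-1}$ is conjugation in $\tilW$ (which preserves $\Wa$ by the normality in \eqref{Wa}). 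For $V=\homog{*}{\Spr_{\gamma}}$ (and likewise $V=\cohoc{*}{\Spr_{\gamma}}$) we take for (i) Lusztig's action of \S\ref{ss:L} and for (ii) the action $\omega\mapsto R_{\omega,*}^{-1}$ of \S\ref{ss:O} --- the passage to $R^{-1}$ being exactly what turns the right action $\omega\mapsto R_{\omega}$ into a left action. So everything reduces to checking (iii).

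Both sides of (iii), as functions of $w$, are homomorphisms $\Wa\to\Aut(V)$ (the left side is $\rho$ conjugated by $\tau(\omega)$, the right side is $\rho$ composed with the automorphism $w\mapsto\leftexp{\omega}{w}$ of $\Wa$), so it suffices to verify (iii) on the Coxeter generators $s\in\Delta_{\aff}$. Fix $\omega\in\Omega_{\bI}$ and $s\in\Delta_{\aff}$, and let $\bP\supsetneq\bI$ be the minimal standard parahoric with $s\in W_{\omega^{-1}\bP\omega}$; note $\omega^{-1}\bP\omega$ is again a (minimal) standard parahoric since it contains $\bI=\omega^{-1}\bI\omega$, and $\Ad(\omega)$ carries $s$ to $\leftexp{\omega}{s}\in W_{\bP}$. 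By Lusztig's nesting compatibility, $\rho|_{W_{\bP}}$ and $\rho|_{W_{\omega^{-1}\bP\omega}}$ are the classical Springer actions of \S\ref{ss:L} attached to $\frl_{\bP}$ and $\frl_{\omega^{-1}\bP\omega}$. The commutative square \eqref{Ro} for the pair $(\bP,\omega)$, together with the compatibility of these two classical Springer actions under the isomorphism $R_{\omega}\colon\Spr_{\bP,\gamma}\isom\Spr_{\omega^{-1}\bP\omega,\gamma}$ and under $\Ad(\omega)$, is precisely the intertwining statement recorded at the end of \S\ref{ss:O}; read off for our $s$ it gives $R_{\omega,*}^{-1}\rho(s)R_{\omega,*}=\rho(\leftexp{\omega}{s})$, which is (iii). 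Hence $\rho$ and $\tau$ amalgamate into a homomorphism $\tilW\to\Aut(\homog{*}{\Spr_{\gamma}})$.

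The case of $\cohoc{*}{\Spr_{\gamma}}$ is identical, with each relative dualizing complex replaced by the constant sheaf, as already signalled in \S\ref{ss:L} and \S\ref{ss:O}. I do not anticipate a genuine obstacle: once the $\Wa$-action, the $\Omega_{\bI}$-action, and diagram \eqref{Ro} are in place, the theorem is their formal consolidation. The only point requiring real care is the bookkeeping of variances --- that $R_{\omega}$ is a right action whereas $\omega\mapsto R_{\omega,*}^{-1}$ is the associated left action, so that the inverses and the direction of conjugation in (iii) are consistent with the group law of $\tilW$ --- and, alongside it, the standard dictionary of \S\ref{ss:aff} between $\Delta_{\aff}$ and the minimal standard parahorics, which is what lets one match the two descriptions of $\leftexp{\omega}{s}$ (conjugate the generator vs.\ conjugate the parahoric).
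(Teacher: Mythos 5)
Your proof is correct and follows essentially the same route the paper takes: the paper's \S 2.3--2.4 set up the $\Wa$-action, the $\Omega_{\bI}$-action, and the intertwining relation via diagram \eqref{Ro}, and then states the theorem as a summary, leaving the formal semidirect-product consolidation implicit; you simply make that last step explicit by reducing condition (iii) to Coxeter generators and matching it against the intertwining statement recorded at the end of \S\ref{ss:O}. Your attention to the variance bookkeeping ($R_{\omega}$ being a right action, $\omega\mapsto R_{\omega,*}^{-1}$ the induced left action) is the right place to be careful, and it checks out.
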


\subsection{The parahoric version}\label{ss:parahoric} For each standard parahoric $\bP$, let $W_{\bP}\subset\Wa$ be the finite Weyl group of the Levi quotient $L_{\bP}$. The Cartesian diagram \eqref{SprCart} and proper base change implies that
\begin{equation*}
\homog{*}{\Spr_{\bP,\gamma}}=\homog{*}{\Spr_{\gamma}}^{W_{\bP}},\hspace{1cm} \cohoc{*}{\Spr_{\bP,\gamma}}=\cohoc{*}{\Spr_{\gamma}}^{W_{\bP}}.
\end{equation*}
In fact, the constant sheaf on $\frt_{\bP}$ is the $W_{\bP}$-invariants of the Springer sheaf $\pi_{\bP,*}\const{\till_{\bP}}$. Let
\begin{equation*}
\one_{\bP}=\frac{1}{\# W_{\bP}}\sum_{w\in W_{\bP}}w\in\Ql[\tilW]
\end{equation*}
be an idempotent. Then the subalgebra $\one_{\bP}\Ql[\tilW]\one_{\bP}$ acts on $\homog{*}{\Spr_{\bP,\gamma}}$ and $\cohoc{*}{\Spr_{\bP,\gamma}}$.

\subsection{Local Picard}\label{ss:localPic} We follow \cite[\S3.3]{NgoFL} in this subsection.
For $\gamma\in\frg(\calO_{F})$, let $a(\gamma)$ be its image in $\frc(\calO_{F})$.   

Recall the $F$-torus $G_{\gamma}$ admits a smooth model $J_{a(\gamma)}$ over $\Spec\calO_{F}$ which canonically only depends on $a(\gamma)$. This is the regular centralizer group scheme defined in \cite[\S3]{NgoHit}. Let $\calP_{a(\gamma)}$ be the affine Grassmannian of the $\Spec\calO_{F}$-group scheme $J_{a(\gamma)}$ (see \cite[\S 3.8]{NgoFL}): 
\begin{equation*}
\calP_{a(\gamma)}=\Grass_{J_{a(\gamma)}}:=\hRes^{F}_{k}J_{a(\gamma)}/\hRes^{\calO_{F}}_{k}J_{a(\gamma)}=\hRes^{F}_{k}G_{\gamma}/\hRes^{\calO_{F}}_{k}J_{a(\gamma)}.
\end{equation*}
For a $k$-algebra $R$, $\calP_{a(\gamma)}(R)$ is the set of isomorphism classes of $J_{a(\gamma)}$-torsors over $\Spec R[[\varpi]]$ together with a trivialization over $\Spec R((\varpi))$. Since $J_{a(\gamma)}$ is commutative, $\calP_{a(\gamma)}$ has a group ind-scheme structure. The action of $\hRes^{F}_{k}G_{\gamma}$ on $\Spr_{\bP,\gamma}$ factors through $\calP_{a(\gamma)}$.

We have the finite type N\'eron model $J^{\flat}_{a(\gamma)}$ of $J_{a(\gamma)}$ (see \cite[\S3.8]{NgoFL}). We define $\calP^{\flat}_{a(\gamma)}$ similarly using $J^{\flat}_{a(\gamma)}$ instead of $J_{a(\gamma)}$. By \cite[Lemme 3.8.1]{NgoFL}, the reduced structure of $\calP^\flat_{a(\gamma)}$ is a free abelian group $\Lambda_{a(\gamma)}$. Let $P_{a(\gamma)}\hookrightarrow\calP_{a(\gamma)}$ be the preimage of $\Lambda_{a(\gamma)}\hookrightarrow\calP^{\flat}_{a(\gamma)}$, then we have an exact sequence
\begin{equation*}
1\to R_{a(\gamma)}\to P_{a(\gamma)}\to\Lambda_{a(\gamma)}\to1.
\end{equation*}
where the kernel $R_{a(\gamma)}$ is an affine commutative group scheme of finite type.

\subsection{Definition of the map $\sigma_{\gamma}$ in \eqref{localcentpi}}\label{ss:definesigma}
Consider the following diagram
\begin{equation}\label{localcameral}
\xymatrix{\Spec\tilO_{F}\ar[dr]\ar[r] & \Spec\calO_{F,a}\ar[r]\ar[d] & \frt\ar[d]\\
& \Spec\calO_{F}\ar[r]^(.6){a(\gamma)} & \frc}.
\end{equation}
where the square on the right is Cartesian by definition. The morphism $\Spec\calO_{F,a}\to\Spec\calO_{F}$ is called the {\em local cameral cover}. The ring $\tilO_{F}$ is the normalization of $\calO_{F,a}$. Choose a component $\Spec\tilO^{!}_{F}\subset\Spec\tilO_{F}$. Let $W^{!}\subset W$ be the stabilizer of $\Spec\tilO^{!}_{F}$. According to \cite[Prop. 3.9.2]{NgoFL} (or rather its dual statement), the choice of $\Spec\tilO^{!}_{F}$ allows us to define a canonical surjection
\begin{equation}\label{xcochgamma}
\xcoch(T)\twoheadrightarrow\xcoch(T)_{W^{!}}=\pi_{0}(\hRes^{F}_{k}G_{\gamma})\twoheadrightarrow\pi_{0}(P_{a(\gamma)}).
\end{equation}
If we change the choice of $\tilO^{!}_{F}$, the above map will differ by an action of $W$ on $\xcoch(T)$. In particular, taking the group algebras in \eqref{xcochgamma} and restricting to $\cent$, the map
\begin{equation*}
\sigma_{\gamma}:\cent\to\Ql[\hRes^{F}_{k}G_{\gamma}]\to\Ql[\pi_{0}(P_{a(\gamma)})]
\end{equation*}
is independent of any choice. The first of map above is the map in \eqref{localcentpi}.

\begin{prop}[Local constancy]\label{p:constancy}
Fix a regular semisimple $\gamma\in\frg(F)$ with $a(\gamma)\in\frc(\calO_{F})$. There is an integer $N>0$ such that for any $\gamma'\in\frg(F)$ with $a(\gamma')\in\frc(\calO_{F})$ and $a(\gamma')\equiv a(\gamma)\mod\varpi^{N}$, there are isomorphisms
\begin{eqnarray*}
\iota_{\calP}&:&\calP_{\gamma}\isom \calP_{\gamma'};\\
\iota&:&\SSpr_{\gamma}\isom\SSpr_{\gamma'}
\end{eqnarray*}
such that $\iota$ is equivariant under the $\calP_{\gamma}$ and $\calP_{\gamma'}$ actions via $\iota_{\calP}$. Moreover, the isomorphism $\iota$ can be chosen so that both $\iota^{*}:\cohoc{*}{\Spr_{\gamma}}\to\cohoc{*}{\Spr_{\gamma'}}$ and $\iota_{*}:\homog{*}{\Spr_{\gamma}}\isom\homog{*}{\Spr_{\gamma'}}$ are $\tilW$-equivariant.
\end{prop}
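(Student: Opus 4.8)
The plan is to prove everything at the level of the group ind-scheme $\hSSpr_{\gamma}$ (or rather its model inside the loop group), and then transport all structures. The key observation is that affine Springer fibers, the local Picard, and Lusztig's action all depend on $\gamma$ only through $a(\gamma) \in \frc(\calO_F)$, and in fact only through a truncation of $a(\gamma)$. So the first step is to make this dependence quantitative. By \cite[Lemme 3.3.1]{NgoFL} (or the finiteness of affine Springer fibers together with the fact that $\Spr_{\gamma}$ is contained in a fixed bounded piece of $\Fl$ depending only on the valuations appearing in $\gamma$, see \cite{Be}), there is $N_0 > 0$ so that $\hSSpr_{\gamma}$, as a subfunctor of a fixed finite-type piece $G((\varpi))_{\leq n}$ of the loop group, is cut out by conditions involving $\gamma \bmod \varpi^{M}$ for some $M$ depending only on $a(\gamma)$. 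Increasing $N$ to dominate both $N_0$ and the constant coming from Ng\^o's deformation lemma for the regular centralizer $J_{a(\gamma)}$, the congruence $a(\gamma') \equiv a(\gamma) \bmod \varpi^N$ forces $\hSSpr_{\gamma} = \hSSpr_{\gamma'}$ as subschemes of $G((\varpi))_{\leq n}$ after at most a bounded translation; this gives $\iota: \SSpr_{\gamma} \isom \SSpr_{\gamma'}$.

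Second, for the Picard part: the regular centralizer group scheme $J_{a}$ over $\Spec\calO_F$ depends, by \cite[\S3]{NgoHit}, functorially on $a \in \frc(\calO_F)$ via pullback along the Kostant section; a congruence $a(\gamma') \equiv a(\gamma) \bmod \varpi^N$ (for $N$ large relative to the discriminant valuation of $a(\gamma)$) gives an isomorphism $J_{a(\gamma)} \isom J_{a(\gamma')}$ of $\calO_F$-group schemes, compatible with the given generic identifications with $G_{\gamma}, G_{\gamma'}$ up to the ambiguity of the Weyl group. This yields $\iota_{\calP}: \calP_{\gamma} \isom \calP_{\gamma'}$, and one checks directly from the defining formula $\calP_{a} = \hRes^F_k G_{\gamma}/\hRes^{\calO_F}_k J_{a}$ that the action on $\hSSpr$ is intertwined, since the action is just right multiplication in the loop group.

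Third, the $\tilW$-equivariance. For the $\Omega_{\bI}$-part this is immediate: $R_{\omega}$ is right multiplication by a representative of $\omega \in N(\bI)/\bI$, which is the same operation for $\gamma$ and $\gamma'$, so it commutes with the identification $\iota$ on the nose. For Lusztig's $\Wa$-part, one must check that the Cartesian diagram \eqref{SprCart} for $\gamma$ is carried to the one for $\gamma'$: that is, that $\iota$ is compatible with $\nu_{\bP}$ and with the evaluation maps $\ev_{\bP,\gamma}$ into $[\frl_{\bP}/L_{\bP}]$. Since the evaluation map records the residue of $\Ad(g^{-1})\gamma$ in $\Lie\bP / \varpi\Lie\bP$, and since $\gamma \equiv \gamma'$ to high order is not quite what we have — we only control $a(\gamma)$, not $\gamma$ itself — this is the subtle point. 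The resolution is that Lusztig's action, being built from the \emph{Springer sheaf} $\pi_{\bP,*}\DD$ on $[\frl_\bP/L_\bP]$ pulled back along $\ev_{\bP,\gamma}$, depends only on the composite map to $\frc_{\bP} = \frl_\bP \sslash L_\bP$, i.e.\ only on $a(\gamma) \bmod \varpi$; and the Weyl-group symmetry of the Springer sheaf is canonical, independent of the element. So the pulled-back sheaf with its $W_{\bP}$-action is literally the same, and $\iota_*$ (resp.\ $\iota^*$) intertwines the $W_{\bP}$-actions on $\homog{*}{\Spr_{\gamma}}$ and $\homog{*}{\Spr_{\gamma'}}$. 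Patching over all standard parahorics $\bP \supset \bI$ gives the full $\Wa$-equivariance, and together with the $\Omega_{\bI}$-part, the full $\tilW$-equivariance.

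The main obstacle, and the step deserving the most care, is the third one: reconciling the fact that we only control $a(\gamma)$ modulo $\varpi^N$ (not $\gamma$ itself) with the apparent dependence of $\ev_{\bP,\gamma}$ on $\gamma$. One has to argue carefully that although the map $g \mapsto \Ad(g^{-1})\gamma$ genuinely depends on $\gamma$, the pair $(\SSpr_{\gamma} \to \SSpr_{\bP,\gamma}, \text{$W_\bP$-action on homology})$ factors through data that depend only on the truncated $a(\gamma)$ — essentially because after restricting to $\hSSpr_{\gamma}$ the element $\Ad(g^{-1})\gamma$ lies in $\Lie\bP$ and its image in the GIT quotient $\frc_\bP$ is pulled back from $a(\gamma)$, and the Springer sheaf lives over that quotient. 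I would organize this by first reducing the entire statement to the universal situation over a truncated base $\frc(\calO_F/\varpi^N)$ (or over the local Hitchin base) and invoking, as a black box, that all the structures in play are pulled back from there, so that $\iota$ and $\iota_{\calP}$ are simply two fibers of one and the same family.
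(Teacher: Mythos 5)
Your overall architecture is similar to the paper's --- a truncation estimate, identifications of regular centralizers, and a check that $\iota$ respects the diagrams used to build the $\tilW$-action --- but there is a genuine gap in your third step, and the paper resolves the issue you flag there by a different and more concrete mechanism.

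The gap: you correctly identify that the evaluation map $\ev_{\bP,\gamma}:\SSpr_{\bP,\gamma}\to[\frl_\bP/L_\bP]$ depends on $\gamma$ itself, not just on $a(\gamma)$, and then claim this is harmless because ``Lusztig's action depends only on the composite map to $\frc_\bP=\frl_\bP\sslash L_\bP$'' since ``the Springer sheaf lives over that quotient.'' This is not correct. The Springer sheaf $\pi_{\bP,*}\DD_{[\till_\bP/L_\bP]}$ with its $W_\bP$-equivariant structure is an object of $D^b([\frl_\bP/L_\bP])$, not something pulled back from $\frc_\bP$; the $W_\bP$-action is not visible at the level of $\frc_\bP$. (Over the regular locus the Grothendieck alteration is indeed pulled back from $\frc_\bP$, but the construction uses $\ev_{\bP,\gamma}^!$ of the full Springer sheaf on $[\frl_\bP/L_\bP]$, and the evaluation map lands wherever $\Ad(g^{-1})\gamma$ lands, including non-regular points.) So knowing that the composite $\SSpr_{\bP,\gamma}\to\frc_\bP$ agrees for $\gamma$ and $\gamma'$ does not give a canonical identification of the pulled-back Springer sheaves with their $W_\bP$-actions. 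Relatedly, your $\iota$ is never pinned down concretely (``equal after at most a bounded translation''), so there is no way to check it intertwines $\ev_{\bP,\gamma}$ with $\ev_{\bP,\gamma'}$.

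The paper sidesteps this entirely by two reductions you do not make. First it reduces to the case where both $\gamma$ and $\gamma'$ are Kostant sections of $a(\gamma)$, $a(\gamma')$, using that $\cohog{1}{F,A}=0$ for any $F$-torus $A$ (so elements with the same characteristic polynomial are $G(F)$-conjugate). Then it invokes a variant of \cite[Lemme 3.5.3]{NgoFL} (with $\Lie\bG$ replaced by $\Lie\bI$) to see that $\SSpr_\gamma$ depends canonically only on the $\calO_F$-subgroup scheme $G_\gamma\subset G\otimes_k\calO_F$, and then \cite[Lemme 3.5.2, 3.5.4]{NgoFL} to produce an \emph{explicit} $g\in G(\calO_F)$ with $\Ad(g)G_\gamma=G_{\gamma'}$. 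The isomorphism $\iota$ is left translation by this $g$. Because $g\in G(\calO_F)$ acts on all the affine partial flag varieties $\Fl_\bP$ compatibly with the projections $\Fl\to\Fl_\bP$, and because $\iota=L_g$ carries $\Spr_\gamma$ to $\Spr_{\gamma'}$ (and $\Spr_{\bP,\gamma}$ to $\Spr_{\bP,\gamma'}$) while matching the maps to $[\frl_\bP/L_\bP]$ and $[\till_\bP/L_\bP]$, the entire diagram \eqref{SprCart} for $\gamma$ is carried to the one for $\gamma'$, and similarly for \eqref{Ro}. This is what gives $\tilW$-equivariance on the nose. Your proposal has the right intuition about which quantities should depend only on $a(\gamma)$, but without the Kostant-section normalization and the explicit conjugating element $g$, you have no actual isomorphism $\iota$ to check anything against, and the sheaf-theoretic shortcut you propose for the $W_\bP$-equivariance does not hold.
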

\begin{proof}
We first deal with the case $a(\gamma)=a(\gamma')$. Since the field $F$ has dimension $\leq1$, $\cohog{1}{F,A}=0$ for any torus $A$ over $F$ (see \cite[Ch. X, end of \S7]{Serre}). In particular, if $\gamma$ and $\gamma'$ have the same image in $\frc^{\rs}(F)$, they are conjugate by an element $g\in G(F)$, and the required isomorphisms are given by $\Ad(g)$.

By the above discussion, we may assume $\gamma$ is the Kostant section of $a(\gamma)$ (see \cite[\S1.2]{NgoFL}); similarly we may assume $\gamma'$ is the constant section of $a(\gamma')$.

We need a variant of \cite[Lemme 3.5.3]{NgoFL} with $\frg(\calO_{F})=\Lie\bG$ replaced by $\Lie\bI$. For this, one only needs to use \cite[Lemme 2.4.3]{NgoFL} in place of \cite[Lemme 2.1.1]{NgoFL} in the argument. This variant of \cite[Lemme 3.5.3]{NgoFL} shows that $\SSpr_{\gamma}$ canonically depends only on the $\calO_{F}$-group scheme $G_{\gamma}=J_{a(\gamma)}$ (recall $\gamma\in\frg(\calO_{F})$ comes from the Kostant section).

By \cite[Lemme 3.5.2]{NgoFL}, there is an integer $N>0$ such that the local cameral covers $\calO_{F,a(\gamma)}$ and $\calO_{F,a(\gamma')}$ are $W$-equivariantly isomorphic as $\calO_{F}$-modules. By \cite[Lemme 3.5.4]{NgoFL}, there exists $g\in G(\calO_{F})$ such that $\Ad(g)G_{\gamma}=G_{\gamma'}$ as subgroups of $G\otimes_{k}\calO_{F}$. The isomorphism $\iota_{\calP}$ is induced from $\Ad(g)$.  The left translation $g:\Fl\to\Fl$ then induces an isomorphism $\iota:\Spr_{\gamma}\isom\Spr_{\gamma'}$ intertwining the actions of $\calP_{a(\gamma)}$ and $\calP_{a(\gamma')}$. This proves the first statement of the Proposition.

To prove that $\iota_{*}$ and $\iota^{*}$ are $\tilW$-equivariant, one only needs to notice that under the left translation by $g\in G(\calO_{F})$, the diagrams \eqref{SprCart} and \eqref{Ro} for $\gamma$ map isomorphically to the corresponding diagrams for $\gamma'$, at least after replacing the $\SSpr$'s by their reduced structures $\Spr$. For the diagram \eqref{SprCart}, we remark that $\Spr_{\bP,\gamma}$ maps isomorphically to $\Spr_{\bP,\gamma'}$ under $g$, since they are images of $\Spr_{\gamma}$ and $\Spr_{\gamma'}$ under the projection $\Fl\to\Fl_{\bP}$. Since these diagrams determine the $\tilW$-action by construction, the $\tilW$-equivariance follows.

\end{proof}

\section{Global Springer action: extension to the hyperbolic locus}
In this section, we extend the $\tilW$-action on $\fQl|_{\Aa\times X}$ constructed in \cite{GS} from the anisotropic locus $\Aa$ to the {\em hyperbolic locus} $\Ah$.

\subsection{The Hitchin moduli stack}

We first recall the definition of the Hitchin moduli stack. Fix a divisor $D=2D'$ on $X$ with $\deg(D)\geq 2g_{X}$. The {\em Hitchin moduli stack} $\MHit=\MHit_{X,G,D}$ assigns to a $k$-scheme $S$ the groupoid of {\em Hitchin pairs} $(\calE,\varphi)$ where
\begin{itemize}
\item $\calE$ is a (right) $G$-torsor over $X\times S$;
\item $\varphi$ is a section of the vector bundle $\Ad(\calE)\otimes\calO_{X}(D)$, where $\Ad(\calE)=\calE\twtimes{G}\frg$ is the adjoint bundle over $X\times S$. 
\end{itemize}
It is well-known that $\MHit$ is an algebraic stack.

Let $\frc_{D}$ be the affine space bundle $\Tot^{\times}(D)\twtimes{\Gm}\frc$ where $\Tot^{\times}(D)$ is the $\Gm$-torsor associated to the line bundle $\calO_{X}(D)$. Let $\AHit=\cohog{0}{X,\frc_{D}}$ be the Hitchin base. We have the {\em Hitchin fibration}
\begin{equation*}
\fHit: \MHit\to \AHit
\end{equation*}
which assigns $(\calE,\varphi)$ the ``invariant polynomials'' of $\varphi$. Recall from \cite[\S4.5]{NgoFL} that there is an open subset $\Ah\subset\AHit$ consisting of those sections $a:X\to\frc_{D}$ which generically lies in the regular semisimple locus $\frc^{\rs}_{D}$. We call $\Ah$ the {\em hyperbolic locus} of $\AHit$.

\subsection{Rigidified Hitchin moduli space}

Fix a point $z\in X(k)\backslash D$. Let $\calA^{z}\subset\AHit$ be the open subset consisting of $a:X\to\frc_{D}$ such that $a(z)\in\frc^{\rs}_{D}$. Clearly $\calA^{z}\subset\Ah$. Define $\calB$ by the Cartesian diagram
\begin{equation*}
\xymatrix{\calB\ar[r]\ar[d] & \frt^{\rs}\ar[d]\\
\calA^{z}\ar[r]^{a\mapsto a(z)} & \frc^{\rs}}
\end{equation*}
So $\calB\to\calA^{z}$ is a $W$-torsor. 

Let $\hMHit$ be the functor which assigns to a $k$-scheme $S$ the groupoid of tuples $(\calE,\varphi,t_{z},\iota_{z})$ where
\begin{itemize}
\item $(\calE,\varphi)\in\MHit(S)$;
\item Let $a:X\times S\to\frc_{D}$ be the image of $(\calE,\varphi)$ under $\fHit$, and $a(z):\{z\}\times S\to\frc$. We require that the image of $a(z)$ lies in $\frc^{\rs}$. Moreover, $t_{z}:S\to\frt^{\rs}$ is a lifting of $a(z)$.
\item $\iota_{z}:(\calE,\varphi)|_{\{z\}\times S}\isom(G\times S,t_{z})$ is an isomorphism of Hitchin pairs over $\{z\}\times S$.
\end{itemize}  

Note that $T$ acts on $\hMHit$ by changing the trivialization $\iota_{z}$, because it is the centralizer of $t_{z}$.

\begin{lemma}\label{l:sch}
The functor $\hMHit$ is represented by an algebraic space which is locally of finite type and smooth over $k$.
\end{lemma}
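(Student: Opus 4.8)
The plan is to exhibit $\hMHit$ as obtained from the algebraic stack $\MHit$ by a sequence of operations that preserve (or improve) representability: first a smooth base change, then the passage from a stack to an algebraic space via rigidification by a free group action. Concretely, I would proceed as follows.

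\textbf{Step 1: Cut out an open substack.} Let $\MHit_{z}\subset\MHit$ be the preimage under $\fHit$ of the open locus $\calA^{z}\subset\AHit$ (sections $a$ with $a(z)\in\frc^{\rs}_{D}$). This is an open substack of $\MHit$, hence again an algebraic stack, and the evaluation map $\MHit_{z}\to[\frc^{\rs}_{D,z}/\Gm]$ (or simply $a\mapsto a(z)\in\frc^{\rs}$ after a choice of local trivialization of $\calO_X(D)$ near $z$, which exists since $z\notin D$) records the value at $z$.

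\textbf{Step 2: Add the lifting $t_z$.} Form the fibre product of $\MHit_z$ with $\frt^{\rs}\to\frc^{\rs}$ over the evaluation-at-$z$ map. Since $\frt^{\rs}\to\frc^{\rs}$ is finite étale (a $W$-torsor), this base change is representable, so the resulting stack $\calM'$ (classifying $(\calE,\varphi,t_z)$ with $t_z$ a lift of $a(z)$) is still an algebraic stack, finite étale over $\MHit_z$. Note $\calM'$ is the analogue of $\hMHit$ but without the rigidifying datum $\iota_z$.

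\textbf{Step 3: Rigidify by trivializing at $z$.} The remaining datum $\iota_z$ trivializes the Hitchin pair at $\{z\}\times S$. Evaluation at $z$ gives a morphism $\calM'\to[\pt/G]$ (classifying $\calE|_{\{z\}\times S}$); more precisely, because we demand $\varphi|_{z}$ to be conjugate to the regular semisimple $t_z$, the relevant automorphism sheaf is the centralizer, which for a regular semisimple element lifted to $\frt^{\rs}$ is exactly $T$. Thus $\calM'$ carries a relative $[\pt/T]$-gerbe structure over its rigidification, and $\hMHit$ is precisely that rigidification: forming the fibre product of $\calM'\to[\pt/T]$ with $\pt\to[\pt/T]$ trivializes $\iota_z$ and kills the $T$-automorphisms. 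Equivalently, $\hMHit=\calM'\times_{[\pt/T]}\pt$, and $[\hMHit/T]\simeq\calM'$.

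\textbf{Step 4: Conclude representability, smoothness, locally finite type.} Since $\calM'$ is an algebraic stack all of whose automorphism groups (after imposing the regular semisimple condition at $z$) are isomorphic to $T$ via the map to $[\pt/T]$, the fibre product $\hMHit=\calM'\times_{[\pt/T]}\pt$ has trivial automorphisms, hence is an algebraic space. The map $\hMHit\to\calM'$ is a $T$-torsor, in particular smooth and surjective; combined with smoothness of $\calM'$ over $k$ on the relevant locus (which follows from the deformation theory of Hitchin pairs with $\deg D\geq 2g_X$, cf. the smoothness statements in \cite{NgoFL}, together with étaleness in Step 2) one gets smoothness of $\hMHit$ over $k$. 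Local finite-typeness is inherited from $\MHit$ through the open immersion, the finite étale base change, and the smooth torsor projection.

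\textbf{Main obstacle.} The substantive point is Step 3: verifying that on the locus where $\varphi|_z$ is (conjugate to) a lift of $a(z)\in\frc^{\rs}$ into $\frt^{\rs}$, the automorphism group of the Hitchin pair \emph{compatible with} the chosen $t_z$ is canonically $T$ (not merely an inner form of it), so that the stack $\calM'$ is genuinely a $[\pt/T]$-gerbe and rigidification by $\iota_z$ does the job. This is where the datum $t_z\in\frt^{\rs}$ rather than just $a(z)\in\frc^{\rs}$ is essential — it rigidifies the centralizer as the fixed torus $T$ — and one should check that the $T$-action by changing $\iota_z$ is free with quotient an algebraic space, i.e. that $\hMHit\to\calM'$ really is a $T$-torsor. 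The rest is formal: open immersions and finite étale maps preserve algebraicity and the stated geometric properties, and $T$-torsors are smooth.
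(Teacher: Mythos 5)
Your reduction is structurally the same as the paper's: the paper observes directly that $\hMHit\to\MHit$ is a torsor under $N_G(T)$ (which you factor as a $W$-torsor followed by a $T$-torsor), and deduces the local finite-type and smoothness statements from this; the remaining point in both cases is to show that the automorphism groups in $\hMHit$ are trivial, so that the stack is in fact an algebraic space. It is exactly this last point where your proposal has a real gap, and your framing of it contains an error.

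You assert in Step~3 (and again in the ``Main obstacle'' paragraph) that the automorphism group of $(\calE,\varphi,t_z)$ is ``canonically $T$'' and hence that $\calM'$ is a $[\pt/T]$-gerbe over its rigidification. This is not correct. The datum $t_z$ trivializes the \emph{local} automorphism group $\Aut\bigl((\calE,\varphi)|_z\bigr)\cong J^\flat_{a,z}$ as the fixed torus $T$, but the automorphism group of an object of $\calM'$ is the global group $\Aut(\calE,\varphi)$, which is only a \emph{subgroup} of $T$: by \cite[Proposition 4.11.2]{NgoFL} it embeds into $\cohog{0}{X_K,J^\flat_a}$. In general this is a proper subgroup of $T$ (over the hyperbolic locus it can even be positive-dimensional but strictly smaller than $T$), so $\calM'$ is \emph{not} a $[\pt/T]$-gerbe, and the assertion masks the actual content of the lemma. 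What you actually need is the injectivity of the restriction-to-$z$ map
\[
\Aut(\calE,\varphi)\hookrightarrow \Aut\bigl((\calE,\varphi)|_z\bigr)=J^\flat_{a,z}\cong T,
\]
since the automorphism group of the rigidified object $(\calE,\varphi,t_z,\iota_z)$ is exactly the kernel of this map. You name this as something ``one should check'' but do not prove it, and it is not formal.

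The paper proves this injectivity via the cameral curve. Using $\Aut(\calE,\varphi)\subset\cohog{0}{X_K,J^\flat_a}$ and \cite[Corollaire 4.8.1]{NgoFL}, one identifies $\cohog{0}{X_K,J^\flat_a}=\cohog{0}{X_a,T}^W$; restriction to the fibre $q_a^{-1}(z)$ (which is a free $W$-orbit because $a(z)\in\frc^{\rs}$, and which every component of the cameral curve $X_a$ must meet since each component is finite over the connected curve $X_K$) gives an injection into $(q_a^{-1}(z)\times T)^W=J^\flat_{a,z}$. Without this step your proof establishes only that $\hMHit$ is a smooth algebraic stack locally of finite type; it does not yet show that the $T$-action is free, which is precisely what upgrades the stack to an algebraic space. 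I suggest you replace the incorrect ``$[\pt/T]$-gerbe'' claim by the correct statement (``automorphism groups are subgroups of $T$''), and then supply the cameral-curve injectivity argument to close the gap.
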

\begin{proof}
Forgetting $t_{z}$ and $\iota_{z}$ we get a morphism $\hMHit\to\MHit$ which is a $N_{G}(T)$-torsor. In particular, the forgetful morphism is of finite type. Since $\MHit$ is an algebraic stack which is locally of finite type and smooth over $k$, so is $\hMHit$.

It remains to show that the automorphism group of any geometric point $(\calE,\varphi,t_{z},\iota_{z})\in\hMHit(K)$ is trivial ($K\supset k$ being any algebraically closed field). In fact, in \cite[Proposition 4.11.2]{NgoFL} it is shown that
\begin{equation*}
\Aut(\calE,\varphi)\subset\cohog{0}{X_{K},J^{\flat}_{a}}.
\end{equation*}
where $J^{\flat}_{a}$ is the (finite type ) N\'eron model of the regular centralizer group scheme $J_{a}$ over $X_{K}$. Let $q_{a}:X_{a}\to X_{K}$ be the cameral cover of $X_{K}$, then by \cite[Corollaire 4.8.1]{NgoFL}, we have
\begin{equation*}
\cohog{0}{X_{K},J^{\flat}_{a}}=\cohog{0}{X_{a},T}^{W}\subset (q_{a}^{-1}(z)\times T)^{W}=J^{\flat}_{a,z}=\Aut((\calE,\varphi)_{z})
\end{equation*}
the last equality holds because $a(z)\in\frc^{\rs}(K)$. Therefore there is no nontrivial automorphism of $(\calE,\varphi)$ which preserves $\iota_{z}$. This shows that the automorphism group of the triple $(\calE,\varphi,t_{z},\iota_{z})$ is trivial.
\end{proof}

We still have the Hitchin fibration (which is no longer proper even over the anisotropic locus)
\begin{equation*}
\hfH:\hMHit\to\calB.
\end{equation*}

\subsection{Rigidified parahoric Hitchin moduli space} For each standard parahoric subgroup $\bP\subset G(F)$, we have defined in \cite[Definition 4.3.3]{GS} an algebraic stack $\calM_{\bP}$ classifying Hitchin pairs $(\calE,\varphi)$ together with a $\bP$-level structure $\calE^{\bP}_{x}$ of $\calE$ at a varying point $x\in X$ such that $\varphi$ is compatible with $\calE^{\bP}_{x}$. For the precise meaning of ``$\bP$-level structure'' and ``compatible'' we refer the readers to \cite[\S4.3]{GS}. 

A particular case is when $\bP=\bI$, then $\Mpar:=\calM_{\bI}$ is called the {\em parabolic Hitchin moduli stack}, which classifies quadruples $(\calE,\varphi,x,\calE^{B}_{x})$ where $(\calE,\varphi)\in\MHit$, $x\in X$ and $\calE^{B}_{x}$ is a $B$-reduction of $\calE$ at $x$ such that $\varphi(x)\in\Ad(\calE^{B}_{x})\otimes\calO(D)_{x}$. Another special case is when $\bP=\bG$, then $\calM_{\bG}=\MHit\times X$. For each $\bP$, we have the parahoric Hitchin fibration 
\begin{equation*}
f_{\bP}:\calM_{\bP}\to\AHit\times X.
\end{equation*}

Let $\hM_{\bP}$ be the stack classifying data $(\calE,\varphi,x,\calE^{\bP}_{x},t_{z},\iota_{z})$ where 
\begin{itemize}
\item $(\calE,\varphi,x,\calE^{\bP}_{x})\in\calM_{\bP}$;
\item $x$ is disjoint from $z$ and $a(z)\in\frc^{\rs}$ ($a=f^{\Hit}(\calE,\varphi)\in\AHit$), $t_{z}\in\frt^{\rs}$ lifts $a(z)$;
\item $\iota_{z}$ is an isomorphism $(\calE,\varphi)|_{z}\isom (G,t_{z})$ of Hitchin pairs at $\{z\}$ (cf. the definition of $\hMHit$).
\end{itemize}
Parallel to Lemma \ref{l:sch}, $\hM_{\bP}$ is represented by an algebraic space which is locally of finite type and smooth over $k$. We also have morphisms
\begin{equation}\label{hf}
\hf_{\bP}:\hM_{\bP}\to\calB\times\Xz
\end{equation}
where $\Xz=X\backslash\{z\}$. When $\bP=\bI$, we usually write the morphism \eqref{hf} as
\begin{equation*}
\hfpar:\hMpar\to\calB\times\Xz.
\end{equation*}
For two parahorics $\bQ\subset\bP$, we have a forgetful morphism over $\calB\times\Xz$:
\begin{equation*}
\widehat{\forg}^{\bP}_{\bQ}:\hM_{\bQ}\to\hM_{\bP}
\end{equation*}

\subsection{Construction of the $\tilW$-action}\label{ss:action} In this subsection we construct a $\tilW$-action on the direct image complex $\hfpar_{!}\Ql$. Since $T$ acts on $\hMpar$ and $\hfpar$ is $T$-invariant, we can view $\hfpar_{!}\Ql$ as an object in $\ind D^{b}_{T}(\calB\times\Xz)$, where $D^{b}_{T}(\calB\times\Xz)$ is the $T$-equivariant derived category of $\Ql$-complexes on $\calB\times\Xz$ (with trivial $T$-action). The construction of the $\tilW$-action is completely parallel to the case of $\fpar$ treated in \cite[\S5.1]{GS} and the affine Springer fiber case in \S\ref{ss:L}.

For each standard parahoric $\bP$, we have a Cartesian diagram
\begin{equation}\label{MCart}
\xymatrix{\hMpar\ar[r]^{\ev^{\bP}_{\bI}}\ar[d]^{\widehat{\forg}^{\bP}_{\bI}} & [\underline{\till_{\bP}}/\unL_{\bP}]_{D}\ar[d]\\
\hM_{\bP}\ar[r]^{\ev_{\bP}} & [\unl_{\bP}/\unL_{\bP}]_{D}}
\end{equation}
Here $\unL_{\bP}$ is a group scheme over $X$ which is an inner form of $L_{\bP}$. For precise definition, see \cite[Equation (4.1)]{GS}. Similarly, we have the twisted versions $\unl_{\bP}$ and $\underline{\till}_{\bP}$ of the Lie algebra $\frl_{\bP}$ and its Grothendieck resolution $\till_{\bP}$. So $[\underline{\till_{\bP}}/\unL_{\bP}]$ and $[\unl_{\bP}/\unL_{\bP}]$ are stacks over $X$ with natural $\Gm$-actions by dilation. Adding a subscript $D$ means applying the twisted product $(-)\twtimes{\Gm}_{X}\Tot^{\times}(D)$ to these stacks.

With this diagram, we can define a $W_{\bP}$-action on $\widehat{\forg}^{\bP}_{\bI,*}\Ql\in D^{b}(\hM_{\bP})$ similarly as in \S\ref{ss:L} or \cite[Construction 5.1.1]{GS}. Therefore we get a $W_{\bP}$-action on the ind-object $\hfpar_{!}\Ql=\hf_{\bP,!}\widehat{\forg}^{\bP}_{\bI,*}\Ql$. As in the proof of \cite[Theorem 5.1.2]{GS}, these actions for various $\bP$ are compatible, and they together give an action of $\Wa$ on $\hfpar_{!}\Ql$.

On the other hand, $\Omega_{\bI}$ still acts on $\hMpar$ on the right, lifting its action on $\Mpar$ in \cite[Corollary 4.3.4]{GS}. This gives an $\Omega_{\bI}$-action on $\hfpar_{!}\Ql$. Putting together with the $\Wa$-action, we get a $\tilW$-action on $\hfpar_{!}\Ql$.

The diagram \eqref{MCart} implies
\begin{equation*}
\hf_{\bP,!}\Ql=(\hfpar_{!}\Ql)^{W_{\bP}}.
\end{equation*}
Therefore we get an action of $\one_{\bP}\Ql[\tilW]\one_{\bP}$ on $\hf_{\bP,!}\Ql$.

\subsection{Hecke correspondences} In \cite[\S3]{GS}, we also have a construction of the $\tilW$-action on $\fQl$ via Hecke correspondences. Here we extend the construction to the case of $\hfpar_{!}\Ql$.

Recall we have a Hecke correspondence $\Heckep$, which is a self-correspondence of $\Mpar$ over $\AHit\times X$. Over the locus $(\Ah\times X)^{\rs}$, there is a $\tilW$-action on $\Mpar|_{(\Ah\times X)^{\rs}}$. For each $\tilw\in\tilW$, the closure $\calH_{\tilw}$ of the graph of the $\tilw$-action is a closed subspace of $\Heckep$.

Let $\oright{h}$ be the second projection from $\Heckep$ or $\calH_{\tilw}$ to $\Mpar$. Let
\begin{eqnarray*}
\hHpar &=& \Heckep\times_{\oright{h},\Mpar}\hMpar;\\
\hH_{\tilw} &=& \calH_{\tilw}\times_{\oright{h},\Mpar}\hMpar.
\end{eqnarray*}
Then $\hHpar$ and $\hH_{\tilw}$ can be viewed as self-correspondences of $\hMpar$ over $\calB\times\Xz$. In fact, $\hHpar$ parametrizes two Hitchin pairs with Borel reductions at a point $x\neq z$, an isomorphism of these Hitchin pairs on $X\backslash\{x\}$ and a rigidification $\iota_{z}$ of the second Hitchin pair at $z$ (which then automatically gives a rigidification of the first Hitchin pair at $z$).

\subsection{The subset $(\calB\times\Xz)'$}\label{ss:BXprime} 
On the scheme $\Ah\times X$, we have an upper semi-continuous function $\delta$ given by the local $\delta$-invariants $\delta(a,x)$, see \cite[\S2.6]{GS}. Let $(\Ah\times X)_{\delta}$ be the level set of this function. By \cite[Proposition 2.6.3]{GS}, for each $\delta_{0}\geq0$, as long as $\deg(D)$ is large enough, we have 
\begin{equation*}
\codim_{\Ah\times X}(\Ah\times X)_{\delta}\geq\delta+1, \textup{ for all }\delta\leq\delta_{0}.
\end{equation*}
Fix such a $D$ (depending on $\delta_{0}$) in the definition of $\MHit$. Let $(\Ah\times X)'=\bigsqcup_{\delta\leq\delta_{0}}(\Ah\times X)_{\delta}$, which is an open subscheme of $\Ah\times X$. Let $(\calB\times\Xz)'=(\Ah\times X)'\times_{(\Ah\times X)}(\calB\times\Xz)$.

We will need the notion of Property (G-2) of a correspondence, as defined in \cite[Definition A.6.1]{GS} and recalled in Appendix \ref{a:G2}. The following fact is an easy consequence of \cite[Lemma 3.1.4]{GS}

\begin{lemma}\label{l:grlike}
Any algebraic subspace $\hH'\subset\hHpar|_{(\calB\times\Xz)'}$ which is of finite type over $\hMpar$ via both projections satisfies Property (G-2) with respect to $(\calB\times\Xz)^{\rs}$, as a self-correspondence of $\hMpar|_{(\calB\times\Xz)'}$.
\end{lemma}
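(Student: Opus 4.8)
The plan is to reduce the statement directly to \cite[Lemma 3.1.4]{GS}, which (in the global Springer setting of \cite{GS}) asserts that the analogous subspaces of $\Heckep$ over $(\AHit\times X)'$ satisfy Property (G-2) as self-correspondences of $\Mpar$ relative to $(\AHit\times X)^{\rs}$. The key observation is that $\hHpar = \Heckep\times_{\oright{h},\Mpar}\hMpar$ is obtained from $\Heckep$ by the base change $\hMpar\to\Mpar$, which is an $N_G(T)$-torsor (hence smooth, surjective, and of relative dimension equal to $\dim T$), and that $(\calB\times\Xz)'$ is obtained from $(\AHit\times X)'$ by the corresponding base change along $\calB\times\Xz\to\AHit\times X$.

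First I would recall precisely what Property (G-2) demands (from Appendix \ref{a:G2}): for a self-correspondence $C\rightrightarrows M$ over a base $S$, with $U\subset S$ an open dense subset over which the correspondence is ``graph-like'', one needs a codimension/dimension estimate on the part of $C$ lying over $S\setminus U$, together with the condition that both legs $C\to M$ are of finite type. In our situation $M=\hMpar$, $S=\calB\times\Xz$, $U=(\calB\times\Xz)^{\rs}$, and $C=\hH'$. The finite-type hypothesis on both legs of $\hH'$ is assumed in the statement, so what remains is the dimension estimate. Next I would show that this estimate is inherited from the corresponding estimate for $\Heckep$ over $(\AHit\times X)'$: since the forgetful map $\hMpar\to\Mpar$ is smooth of pure relative dimension $\dim T$, and since $\hH'$ maps (via either leg composed with $\oright{h}$ or $\oleft{h}$) to the preimage in $\Heckep$ of its image, the codimension of the ``bad locus'' — the fibers over $(\calB\times\Xz)'\setminus(\calB\times\Xz)^{\rs}$ — is unchanged under this smooth base change. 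Here it is crucial that $(\calB\times\Xz)'$ is by definition the preimage of $(\AHit\times X)'$, and that the function $\delta$ (governing which level sets are included in the prime locus, and hence the relevant codimension bounds of \cite[Proposition 2.6.3]{GS}) is pulled back from $\AHit\times X$ along a finite étale, hence codimension-preserving, map.

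The main step, and the one I expect to be the genuine content rather than bookkeeping, is matching the ``graph-like'' locus: one must check that over $(\calB\times\Xz)^{\rs}$ — equivalently over $(\AHit\times X)^{\rs}$ after the torsor base change — the correspondence $\hH'$, or rather its closure components $\hH_{\tilw}$, really is the graph of an automorphism of $\hMpar|_{(\calB\times\Xz)^{\rs}}$. This follows because over the regular semisimple locus the $\tilW$-action on $\Mpar$ (the Hecke action via $\calH_{\tilw}$) lifts canonically to $\hMpar$: a Hecke modification away from $x\neq z$ does not affect the rigidification data $(t_z,\iota_z)$ at $z$, so the $\tilW$-action commutes with the forgetful map $\hMpar\to\Mpar$ and descends/lifts accordingly. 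Thus $\hH_{\tilw}|_{(\calB\times\Xz)^{\rs}}$ is the graph of $\tilw$ acting on $\hMpar|_{(\calB\times\Xz)^{\rs}}$, and any $\hH'$ of finite type over $\hMpar$ via both legs must be contained in a finite union of such $\hH_{\tilw}$, reducing to \cite[Lemma 3.1.4]{GS} term by term.

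Concretely, the proof will run: (i) observe $\hHpar|_{(\calB\times\Xz)'}$ is the base change of $\Heckep|_{(\AHit\times X)'}$ along the smooth $N_G(T)$-torsor $\hMpar\to\Mpar$, composed with the finite étale $\calB\to\AHit\times X$ restriction; (ii) note Property (G-2), being a statement about finite-typeness of legs plus a codimension estimate on the complement of the graph-like locus, is stable under such a base change by the commutation of the $\tilW$-action with the forgetful morphism established above; (iii) invoke \cite[Lemma 3.1.4]{GS} for $\Heckep$ over $(\AHit\times X)'$ to conclude. The only real obstacle is step (ii) — verifying that the graph-like locus and the codimension bound transport correctly under base change — but since $\hMpar\to\Mpar$ is smooth of constant relative dimension and $(\calB\times\Xz)'$ is literally defined as a pullback, this is a formal consequence once the compatibility of the $\tilW$-action with rigidification is in hand.
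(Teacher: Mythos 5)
The paper supplies no explicit proof of this lemma; it only asserts the statement is an ``easy consequence'' of \cite[Lemma 3.1.4]{GS}, and your proposal is a correct filling-in of that assertion via exactly the relevant structure: the $N_G(T)$-torsor $\hMpar\to\Mpar|_{\calA^z\times\Xz}$, the finite \'etale $\calB\to\calA^z$, the definition of $(\calB\times\Xz)'$ as a preimage of $(\AHit\times X)'$, and the observation that a Hecke modification at $x\neq z$ does not disturb the rigidification data at $z$, so the $\tilW$-action lifts.

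Two minor infelicities, neither fatal. First, your gloss of Property (G-2) conflates it with the finite-type hypothesis on the legs; as recalled in Appendix~\ref{a:G2}, (G-2) is purely a pair of dimension bounds ($\dim C_U\leq d$ and $\dim(\mathrm{image\ of\ }C-C_U\to X\times_S Y)<d$), while the finite-type requirement is a separate hypothesis appearing in Lemma~\ref{l:openpart}, and ``graph-like over $U$'' is not part of the definition. Second, your step (iii) proposes containing $\hH'$ in a finite union of the closures $\hH_{\tilw}$ and applying \cite[Lemma 3.1.4]{GS} ``term by term''; this detour is unnecessary, and the covering claim would itself need justification. That cited lemma, judging from how the present paper mirrors it, already applies to an arbitrary algebraic subspace of $\Heckep$ that is of finite type over $\Mpar$ via both legs, so it is cleaner to take the image $\calH'$ of $\hH'$ in $\Heckep$ (note that the finite-type hypothesis descends, since $\hH'\to\calH'$ and $\hMpar\to\Mpar|_{\calA^z\times\Xz}$ are torsors under the same group), apply \cite[Lemma 3.1.4]{GS} there, and observe that both dimension bounds of (G-2) transport across the torsor: $\dim\hH'_U\leq\dim\calH'_U+\dim T\leq d+\dim T=\dim\hMpar$, and the image of $\hH'-\hH'_U$ in $\hMpar\times_{\calB\times\Xz}\hMpar$ maps to the image of $\calH'-\calH'_U$ in $\Mpar\times_{\AHit\times X}\Mpar$ with fibers of dimension at most $\dim T$ (because the two rigidifications on a point of $\hHpar$ are determined by one another via $\tau$), giving dimension $<d+\dim T$ as required.
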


Using the formalism of cohomological correspondences in Appendix \ref{ss:corr}, the fundamental class of $\hH_{\tilw}$ gives a map
\begin{equation*}
[\hH_{\tilw}]_{\#}:\hfpar_{!}\Ql\to\hfpar_{!}\Ql.
\end{equation*}
in the category $\ind D^{b}_{T}(\calB\times\Xz)$ (with $T$ acting trivially on $\calB\times\Xz$).

Completely parallel to \cite[Theorem 3.3.3]{GS} and \cite[Proposition 5.2.1]{GS}, we have
\begin{prop}
The assignment $\tilw\mapsto[\hH_{\tilw}]_\#$ for $\tilw\in\tilW$ gives a left action of $\tilW$ on the restriction $\hfpar_{!}\Ql|_{(\calB\times\Xz)'}$. Moreover, this action coincides with the action constructed in \S\ref{ss:action}.
\end{prop}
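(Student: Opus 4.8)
The statement to be proved is the analogue, in the rigidified parahoric setting $\hfpar:\hMpar\to\calB\times\Xz$, of \cite[Theorem 3.3.3]{GS} together with the comparison of \cite[Proposition 5.2.1]{GS}: the Hecke-correspondence classes $[\hH_{\tilw}]_{\#}$ assemble into a left $\tilW$-action on $\hfpar_{!}\Ql|_{(\calB\times\Xz)'}$, and this action agrees with the sheaf-theoretic one of \S\ref{ss:action}. The plan is to reduce both assertions directly to their already-established counterparts over $\Mpar$, using that $\hMpar\to\Mpar$ is a torsor (an $N_G(T)$-torsor, as in Lemma~\ref{l:sch}), together with the base-change formalism for cohomological correspondences from Appendix~\ref{ss:corr}.

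\textbf{Step 1: the correspondences and classes are obtained by base change.} First I would record that $\hHpar=\Heckep\times_{\oright h,\Mpar}\hMpar$ and $\hH_{\tilw}=\calH_{\tilw}\times_{\oright h,\Mpar}\hMpar$ are literally the pullbacks of $\Heckep$ and $\calH_{\tilw}$ along the (smooth, representable) map $\hMpar\to\Mpar$; in particular the fundamental class $[\hH_{\tilw}]$ is the pullback of $[\calH_{\tilw}]$. By the compatibility of the $\#$-construction with smooth (hence clean) base change — this is exactly the content of the functoriality statements in Appendix~\ref{ss:corr}, applied to the Cartesian square relating $\hfpar$ and $\fpar$ over the map $\calB\times\Xz\to\AHit\times X$ composed with $\hMpar\to\Mpar$ — the operator $[\hH_{\tilw}]_{\#}$ on $\hfpar_{!}\Ql$ is the pullback of $[\calH_{\tilw}]_{\#}$ on $\fQl$ along $\calB\times\Xz\to\AHit\times X$, twisted by the $N_G(T)$-torsor; concretely, $\hfpar_{!}\Ql$ is computed from $\fQl$ by the torsor and base change, and $[\hH_{\tilw}]_{\#}$ is the induced endomorphism. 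Here one uses Lemma~\ref{l:grlike} to guarantee that $\hH_{\tilw}$ (and any finite-type-over-$\hMpar$ subspace of $\hHpar$) satisfies Property (G-2) over $(\calB\times\Xz)^{\rs}$, so that the $\#$-construction applies in the $\ind D^b_T$ setting; this is the formal analogue of \cite[Lemma 3.1.4]{GS} that was needed in the unrigidified case.

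\textbf{Step 2: the braid/Coxeter relations.} Given Step 1, the relations $[\hH_{\tilw}]_{\#}[\hH_{\tilw'}]_{\#}=[\hH_{\tilw\tilw'}]_{\#}$ (for a reduced-word decomposition, plus the correct behaviour of the simple reflections) are obtained from the corresponding relations for $\calH_{\tilw}$ proved in \cite[Theorem 3.3.3]{GS} by applying the pullback functor, because composition of cohomological correspondences commutes with clean base change. The only point requiring care is that composition of the $\hH_{\tilw}$'s is computed by a fiber product over $\hMpar$, and one must check this fiber product is again the base change of the corresponding fiber product over $\Mpar$; this is immediate since $\hMpar\to\Mpar$ is representable, and the relevant intersections remain of the expected dimension over $(\calB\times\Xz)'$ by the codimension estimate built into the definition of $(\calB\times\Xz)'$ in \S\ref{ss:BXprime} (pulled back from $(\Ah\times X)'$), which is exactly the role that $(\Ah\times X)'$ played in \cite[Theorem 3.3.3]{GS}. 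So the left $\tilW$-action follows.

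\textbf{Step 3: comparison with the sheaf-theoretic action.} For the second assertion I would again mimic \cite[Proposition 5.2.1]{GS}. The sheaf-theoretic action of \S\ref{ss:action} is built from the Springer $W_{\bP}$-actions on $\widehat{\forg}^{\bP}_{\bI,*}\Ql$ via the Cartesian diagram \eqref{MCart}, together with the $\Omega_{\bI}$-action; the Hecke-correspondence action of simple reflections $s\in\Delta_{\aff}$ inside $\bP/\bI$ is identified with the Springer action by the same local computation in the fibers of $\widehat{\forg}^{\bP}_{\bI}$ as in \cite[\S5.2]{GS}, because diagram \eqref{MCart} for $\hMpar$ is the base change of the analogous diagram for $\Mpar$, so the Springer sheaves and their $W_{\bP}$-actions are pulled back as well. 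Likewise the correspondence $\hH_{\omega}$ for $\omega\in\Omega_{\bI}$ is the graph of the $\Omega_{\bI}$-action on $\hMpar$, so $[\hH_{\omega}]_{\#}$ is the geometric $\Omega_{\bI}$-action, matching \S\ref{ss:action}. Since $\Delta_{\aff}$ and $\Omega_{\bI}$ generate $\tilW$ and both actions are now known to be genuine $\tilW$-actions (Step 2 for one, \S\ref{ss:action} for the other), agreement on generators forces agreement.

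\textbf{Main obstacle.} The genuinely substantive point is not any of the algebra but verifying that all the base-change and cleanness hypotheses in Appendix~\ref{ss:corr} are met in the $\ind$-category $\ind D^b_T(\calB\times\Xz)$ and on spaces merely locally of finite type — i.e.\ that passing from $\Mpar$ over $\AHit\times X$ to the rigidified $\hMpar$ over $\calB\times\Xz$ (an extra $N_G(T)$-torsor plus an open base change that is \emph{not} contained in the anisotropic locus) does not disturb the finiteness needed to define $[\hH_{\tilw}]_{\#}$ and to compute compositions. This is where Lemma~\ref{l:grlike} and the choice of $(\calB\times\Xz)'$ do all the work, and the proof should consist essentially of invoking them and then transporting \cite[Theorem 3.3.3]{GS} and \cite[Proposition 5.2.1]{GS} verbatim.
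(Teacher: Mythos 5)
The paper gives no written proof; it simply states that the Proposition follows ``completely parallel to'' \cite[Theorem 3.3.3]{GS} and \cite[Proposition 5.2.1]{GS}, i.e.\ by rerunning those arguments in the rigidified setting over the hyperbolic locus, with Lemma~\ref{l:grlike} and the codimension estimate defining $(\calB\times\Xz)'$ supplying the needed Property~(G-2) and dimension control.

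Your plan correctly identifies these inputs, but the specific reduction you propose has a gap. In Step~2 you obtain the braid/Coxeter relations for $[\hH_{\tilw}]_{\#}$ by pulling back ``the corresponding relations for $\calH_{\tilw}$ proved in \cite[Theorem 3.3.3]{GS}'' along $\hMpar\to\Mpar$ and $\calB\times\Xz\to\AHit\times X$. But \cite[Theorem 3.3.3]{GS} establishes the $\tilW$-action on $\fQl$ only over the \emph{anisotropic} locus $(\Aa\times X)'$ — indeed, extending from $\Aa$ to $\Ah$ is the entire purpose of this section of the paper (see the header of \S3). The map $\calB\times\Xz\to\AHit\times X$ does not land in $\Aa\times X$ (its image is in $\calA^z\times\Xz\subset\Ah\times\Xz$), so the relations you want to transport are simply not available as a statement to pull back, and invoking them is circular. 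Your ``Main obstacle'' paragraph even notices that the base ``is \emph{not} contained in the anisotropic locus'' but then treats this as a mere finiteness concern rather than as the substantive issue.

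The fix is to drop the base-change-from-$\Mpar$ framing and run the proof of \cite[Theorem 3.3.3]{GS} directly for $\hfpar$, which is what ``completely parallel'' means. Concretely: over $(\calB\times\Xz)^{\rs}$ there is a genuine $\tilW$-action on $\hMparrs$, and each $\hH_{\tilw}$ restricts to the graph of $\tilw$; hence the operators $[\hH^{\rs}_{\tilw}]_{\#}$ are literally the pullback/pushforward along that action, so the relations $[\hH^{\rs}_{\tilw}]_{\#}\circ[\hH^{\rs}_{\tilw'}]_{\#}=[\hH^{\rs}_{\tilw\tilw'}]_{\#}$ (and the coincidence with the Springer-theoretic action defined via \eqref{MCart}) are immediate there. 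Then Lemma~\ref{l:grlike} (Property~(G-2) for any finite-type sub-correspondence of $\hHpar|_{(\calB\times\Xz)'}$) together with Lemma~\ref{l:openpart} and Lemma~\ref{l:comp} upgrades these identities of restricted correspondences to identities of $\zeta_{\#}$'s over all of $(\calB\times\Xz)'$. None of this requires relating $\hMpar$ to $\Mpar$, nor does it require \cite[Theorem 3.3.3]{GS} beyond its range of validity. Your Step~3 has the same mild issue (appealing to \cite[Proposition 5.2.1]{GS} as a black box rather than to its proof method), and the same fix applies: check agreement over the rs locus and use Property~(G-2) to propagate.
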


\subsection{Global Picard stack}\label{ss:globalP} 
We recall some facts from \cite[\S4.8]{NgoFL}. For a point $a\in\Ah(k)$, we have a smooth commutative group scheme $J_{a}$ over $X$, called the {\em regular centralizer group scheme}. The global Picard stack $\calP_a$ is defined as the moduli stack of $J_a$-torsors on $X$. It acts on both $\MHit_{a}$ and $\Mpar_{a,x}$ (for any $x\in X(k)$). 

Because we work with the rigidified moduli spaces, it is more relevant to consider the group subscheme $J^{z}_{a}\subset J_{a}$ which fits into the exact sequence
\begin{equation}\label{defineJz}
1\to J^{z}_{a}\to J_{a}\to i_{z,*}J_{a,z}\to 1
\end{equation}
Here $J_{a,z}$ is the fiber of $J_{a}$ at $z$ and $i_{z}:\{z\}\to X$ is the inclusion. Let $\hP_{a}$ be the Picard stack of $J^{z}_{a}$-torsors over $X$. One may also view $\hP_{a}$ as classifying a $J_{a}$-torsor over $X$ together with a trivialization at $z$. Similar argument as in Lemma \ref{l:sch} shows that $\hP_{a}$ is in fact a group scheme, locally of finite type and smooth over $k$. The exact sequence \eqref{defineJz} gives a homomorphism of group schemes $J_{a,z}\to\hP_{a}$, and an isomorphism of Picard stacks
\begin{equation*}
\calP_{a}\cong[\hP_{a}/J_{a,z}].
\end{equation*}
As $a$ varies in $\calA^{z}$, $\{\hP_{a}\}$ form a group scheme $\hP_{\calA^{z}}$ over $\calA^{z}$. Let
\begin{equation*}
\hP=\calB\times_{\calA^{z}}\hP_{\calA^{z}}.
\end{equation*}
For $\hata=(a,t_{z})\in\calB$, the choice of $t_{z}$ gives an isomorphism $J_{a,z}\isom T$. Therefore we have an isomorphism of Picard stacks over $\calB$:
\begin{equation*}
\calB\times_{\calA}\calP\cong[\hP/T]
\end{equation*}
The group scheme $\hP$ acts on both $\hMHit$ and its parahoric variants $\hM_{\bP}$ over $\calB$.

Let $J^\flat_a$ be the finite-type N\'eron model of $J_a$ over $X$ (see \cite[\S4.8]{NgoFL}), then there is an exact sequence
\begin{equation*}
1\to J^{z}_a\to J^\flat_a\to J_{a,z}\times\prod_{x\in\Sing(a)}R_{a,x}\to1.
\end{equation*}
Here $R_{a,x}$ is an affine group scheme of finite type over $\Spec k=\Spec k(x)$, and $\Sing(a)\subset X$ is the locus where $a(x)\notin\frc^{\rs}_{D}$. Let $\calP^{\flat}_{a}$ be the Picard stack of $J^{\flat}_{a}$-torsors on $X$, and let $P^{\flat}_{a}$ be its coarse moduli space. From the above sequence we deduce an exact sequence
\begin{equation}\label{exactP}
1\to\cohog{0}{X,J^\flat_a}\to J_{a,z}\times \prod_{x\in\Sing(a)}R_{a,x}\to \hP_a\to P^\flat_a\to1.
\end{equation}

\subsection{Definition of the map $\sigma$ in \eqref{globalcentpi}}\label{ss:defineglobalsigma}
Recall from \cite[Definition 2.2.2]{GS} 
we have the {\em universal cameral cover} defined by the Cartesian diagram
\begin{equation*}
\xymatrix{\tcA^{\Hit}\ar[r]^{\ev}\ar[d]^{q} & \frt_D\ar[d]^{q_{\frt}}\\
\AHit\times X\ar[r]^{\ev} & \frc_D}
\end{equation*}
For $a\in\AHit(k)$, the preimage $X_a:=q^{-1}(\{a\}\times X)$ is the {\em cameral curve} of $a$. Let $\tcArs\subset\tcA^{\Hit}$ (resp. $(\Ah\times X)^{\rs}\subset \AHit\times X$) be the preimage of $\frt^{\rs}_{D}$ (resp. $\frc^{\rs}_{D}$). Then $q^{\rs}:\tcArs\to(\Ah\times X)^{\rs}$ is a $W$-torsor.

Recall from \cite[paragraph before Lemma 3.2.6]{GS} that we have a canonical morphism:  
\begin{equation}\label{section}
s:\xcoch(T)\times\tcArs\to\Grass_J\to\calP.
\end{equation}
This gives a push-forward map on homology complexes
\begin{equation*}
s_*:\Ql[\xcoch(T)]\otimes\homo{*}{\tcArs/\Ah}\to\homo{*}{\calP/\Ah}
\end{equation*}
which is $W$-invariant ($W$ acts diagonally on the two factors on the LHS and acts trivially on the RHS). Therefore, it factors through the $W$-coinvariants of $\Ql[\xcoch(T)]\otimes\homo{*}{\tcArs/\calA}$. In particular, if we restrict to $\cent$, the map $s_*$ factors through a map
\begin{equation}\label{eq:th2}
s'_{*}:\cent\otimes\homo{*}{\tcArs/\Ah}_W\to\homo{*}{\calP/\Ah}
\end{equation}
Since $q^{\rs}$ is a $W$-torsor, we have $\homo{*}{\tcArs/\Ah}_W=\homo{*}{(\Ah\times X)^{\rs}/\Ah}$. Since $(\Ah\times X)^{\rs}\to\Ah$ has connected fibers, we get 
\begin{equation}\label{Wcoinv}
\homo{0}{\tcArs/\Ah}_{W}=\homo{0}{(\Ah\times X)^{\rs}/\Ah}=\const{\Ah}.
\end{equation}
On the other hand,
\begin{equation}\label{homo0}
\homo{0}{\calP/\Ah}=\Ql[\pi_{0}(\calP/\Ah)].
\end{equation}
Therefore, the degree zero part of \eqref{eq:th2} gives the desired map $\sigma$ in \eqref{globalcentpi}.

\begin{remark}\label{r:B}
We define 
\begin{equation*}
\tcB=\tcA\times_{\AHit}\calB, \hspace{1 cm} \tcBrs=\tcArs\times_{\Ah}\calB.
\end{equation*}
When we work with the rigidified versions of Hitchin moduli spaces, the above discussion is valid if we change $\Ah,\tcArs$ and $\calP$ to $\calB,\tcBrs$ and $\hP$ respectively. In particular, we have a map
\begin{equation}\label{centB}
\sigma_{\calB}:\cent\to\Ql[\pi_{0}(\hP/\calB)].
\end{equation}
analogous to \eqref{globalcentpi}. Since $\hP\to\calB\times_{\Ah}\calP$ is a $T$-torsor, the sheaf $\pi_{0}(\hP/\calB)$ is the pullback of $\pi_{0}(\calP/\Ah)$ to $\calB$. 
\end{remark}


\section{Proof of the global main theorem}

\subsection{Proof of Theorem \ref{th:Hit}}\label{ss:proofHit}
We first set up some notation. Fix $S\subset T$ to be any algebraic subgroup. Then $T$, hence $S$ acts on $\hMHit$. The complex $\hfH_{!}\Ql$ (as an ind-object of $D^{b}(\calB)$) carries a canonical $S$-equivariant structure, and can be viewed as an ind-object in the $S$-equivariant derived category $D^{b}_{S}(\calB)=D^{b}([\calB/S])$, where $S$ acts trivially on $\calB$. Let $p_{S}:[\calB/S]\to\calB$ be the projection. Then we have the derived functor $p_{S,*}:D^{b}_{S}(\calB)\to D^{+}(\calB)$ of taking $S$-equivariant cohomology. We define
\begin{equation*}
\bR^{i}_{S}\hfH_{!}\Ql:=\bR^{i}p_{S,*}\hfH_{!}\Ql.
\end{equation*}
For each point $\hata\in\calB(k)$, the stalk of $\bR^{i}_{S}\hfH_{!}\Ql$ at $\hata$ is
\begin{equation*}
(\bR^{i}_{S}\hfH_{!}\Ql)_{\hata}=\cohoc{i}{[\hMHit_{\hata}/S]}.
\end{equation*}
In particular,
\begin{equation*}
(\bR^{i}_{T}\hfH_{!}\Ql)_{\hata}=\cohoc{i}{\MHit_{a}}.
\end{equation*}
Similarly, one defines $\bR^{i}_{S}\hfpar_{!}\Ql$.

We also define the following maps
\begin{equation*}
\xymatrix{\tcBrs\ar[r]^{q^{\rs}}\ar@/_1pc/[rr]_{\tilp^{\rs}} & \calB\times\Xz\ar[r]^{p} & \calB}
\end{equation*} 

We can identify $\hfH_{!}\Ql\boxtimes\const{\Xz}$ with $p^{!}\hfH_{!}\Ql[-2](-1)$. Therefore, the action of $\cent$ on $\hfH_{!}\Ql\boxtimes\const{\Xz}$ defined in \S\ref{ss:action} may be viewed as a map
\begin{equation*}
\cent\otimes p^{!}\hfH_{!}\Ql\to p^{!}\hfH_{!}\Ql.
\end{equation*}
Applying the adjunction $(p_{!},p^{!})$, we get
\begin{equation}\label{padj}
\cent\otimes\homog{*}{\Xz}\otimes\hfH_{!}\Ql=\cent\otimes p_{!}p^{!}\hfH_{!}\Ql\to\hfH_{!}\Ql.
\end{equation}
Taking only the $\homog{0}{\Xz}$ part, and passing to the level of $S$-equivariant cohomology sheaves, we get
\begin{equation}\label{definealpha}
\alpha_{0}:\cent\otimes\bR^{i}_{S}\hfH_{!}\Ql\to\bR^{i}_{S}\hfH_{!}\Ql.
\end{equation}
This gives an action of $\cent$ on $\bR^{i}_{S}\hfH_{!}\Ql$. It is easy to see that the $\cent$-action on $\bR^{i}_{S}(\hfH_{!}\Ql\boxtimes\const{\Xz})=\bR^{i}_{S}\hfH_{!}\Ql\boxtimes\const{\Xz}$, given by taking $\bR^{i}_{S}(-)$ of the construction in \S\ref{ss:action}, is the same as $\alpha_{0}\otimes\id$.

The map \eqref{section} has a rigidified version
\begin{equation*}
s:\xcoch(T)\times\tcBrs\to\Grass_J|_{\Xz}\to\hP,
\end{equation*}
which gives a geometric action of $\lambda$ on $\hMparrs$ via
\begin{equation}\label{actionlam}
\xymatrix{\hMparrs\ar@{=}[d]\ar[r]^{s(\lambda,-)} & \hP\times_{\calB}\hMparrs\ar[r]^{\act}\ar@{=}[d] & \hMparrs\ar@{=}[d]\\
\tcBrs\times_{\calB}\hMHit\ar[r] & \tcBrs\times_{\calB}(\hP\times_{\calB}\hMHit)\ar[r]^(.6){\id\times\act} & \tcBrs\times_{\calB}\hMHit}
\end{equation}

Let $\Av(\lambda):=\sum_{\lambda'\in|\lambda|}\lambda'\in\cent$, then $\Av(\lambda)$ acts on $p^{!}\hfH_{!}\Ql$. Base change from $\calB\times\Xz$ to $\tcBrs$, the action of $\Av(\lambda)$ on $\tilp^{\rs,!}\hfH_{!}\Ql=q^{\rs,!}\hfpar_{!}\Ql$ comes from the action of $\sum_{\lambda'\in|\lambda|}\lambda'$ on $\hfpar_{!}\Ql$, hence is induced from the horizontal arrows in \eqref{actionlam} for various $\lambda'\in|\lambda|$:
\begin{equation}\label{tilpull}
q^{\rs,!}\Av(\lambda):\tilp^{\rs,!}\hfH_{!}\Ql\xrightarrow{\sum_{\lambda'\in|\lambda|}s_{\lambda',*}}\tilp^{\rs,!}(\homo{*}{\hP/\calB}\otimes\hfH_{!}\Ql)\xrightarrow{\tilp^{\rs,!}(\cap)}\tilp^{\rs,!}\hfH_{!}\Ql
\end{equation}
where the last arrow $\cap$ is the cap product
\begin{equation*}
\cap:\homo{*}{\hP/\calB}\otimes\hfH_{!}\Ql\to\hfH_{!}\Ql.
\end{equation*}
derived from the action of $\hP$ on $\hMHit$. See \cite[\S7.4.2]{NgoFL} for a discussion of cap products, which extends to the situation where the relevant spaces are locally of finite type as formulated in Appendix \ref{a:lft}.

Applying the adjunction $(\tilp^{\rs}_{!},\tilp^{\rs,!})$ to \eqref{tilpull} we get
\begin{equation}\label{tilpadj}
\homo{*}{\tcBrs/\calB}\otimes\hfH_{!}\Ql=\tilp^{\rs}_{!}\tilp^{\rs,!}\hfH_{!}\Ql\xrightarrow{\sum_{\lambda'\in|\lambda|}s_{\lambda',*}}\homo{*}{\hP/\calB}\otimes\hfH_{!}\Ql\xrightarrow{\cap}\hfH_{!}\Ql
\end{equation}

On the other hand, since the map \eqref{tilpull} is $q^{\rs,!}\Av(\lambda)$, the map \eqref{tilpadj} should factor through the map \eqref{padj} via 
\begin{equation*}
q^{\rs}_{!}\otimes\id:\homo{*}{\tcBrs/\calB}\otimes\hfH_{!}\Ql\to\homo{*}{\calB\times\Xz/\calB}\otimes\hfH_{!}\Ql=\homog{*}{\Xz}\otimes\hfH_{!}\Ql.
\end{equation*}

Summarizing, as $\lambda\in\xcoch(T)$ varies, we get a commutative diagram
\begin{equation}\label{commm}
\xymatrix{\cent\otimes\homo{*}{\tcBrs/\calB}_{W}\otimes\hfH_{!}\Ql\ar[r]^(.65){s'_{*}\otimes\id}\ar[d]^{q^{\rs}_{!}} & \homo{*}{\hP/\calB}\otimes\hfH_{!}\Ql\ar[d]^{\cap}\\
\cent\otimes\homog{*}{\Xz}\otimes\hfH_{!}\Ql\ar[r]^(.6){\eqref{padj}} & \hfH_{!}\Ql}
\end{equation}
Here we take $W$-coinvariants on $\homo{*}{\tcBrs/\calB}$ because both maps from it are $W$-invariant. So the $s'_{*}$ in the upper arrow is the analog of the map \eqref{eq:th2} in the rigidified setting. 

We are now ready to pass to the $S$-equivariant cohomology sheaves. We can apply the truncation functor $\tau_{\leq0}$ (in the usual $t$-structure of $D^{b}(\calB)$) to $\homo{*}{\tcBrs/\calB}$, $\homog{*}{\Xz}\otimes\const{\calB}$ and $\homog{*}{\hP/\calB}$, and the truncation functor $\tau_{\leq i}p_{S,*}$ (in the usual $t$-structure of $D^{+}(\calB)$, recall $p_{S,*}:D^{b}_{S}(\calB)\to D^{+}(\calB)$) to the complex $\hfH_{!}\!\Ql$. The ``truncated'' diagram  \eqref{commm} is still commutative.

Taking the zeroth homology sheaves of $\homo{*}{\tcBrs/\calB}$, $\homog{*}{\Xz}$ and $\homog{*}{\hP/\calB}$, using the analogs of \eqref{Wcoinv} and \eqref{homo0} for $\calB,\tcBrs$ and $\hP$ (see Remark \ref{r:B}), and passing to $\bR^{i}_{S}\hfH_{!}\Ql$ in the truncated diagram \eqref{commm}, we get the following commutative diagram for each $i\in\ZZ$:
\begin{equation}\label{d:effcoh}
\xymatrix{\cent\otimes\bR^i_{S}\hfH_{!}\Ql\ar[r]^{\sigma_{\calB}\otimes\id}\ar[dr]^{\alpha_{0}} & \Ql[\pi_{0}(\hP/\calB)]\otimes\bR^i_{S}\hfH_{!}\Ql\ar[d]^{\cap}\\
& \bR^{i}_{S}\hfH_{!}\Ql}
\end{equation}
where the diagonal map $\alpha_{0}$ is the same as the map \eqref{definealpha}. 

For the original statement of Theorem \ref{th:Hit}, we take $S=T$. Let $\pi_{z}:\calB\times\Xz\to\calA^{z}\times\Xz$. Since
\begin{equation*}
\hMHit\to(\calB\times\Xz)\times_{(\Ah\times X)}\MHit
\end{equation*}
is a $T$-torsor, we have
\begin{equation*}
\bR^{i}_{T}\hfH_{!}\Ql\boxtimes\const{\Xz}=\pi^{*}_{z}\bR^{i}\fHQl|_{\calA^{z}}\boxtimes\const{\Xz}.
\end{equation*}
Moreover, since $\pi_{z}$ is finite \'etale, $\End(\bR^{i}\fHQl|_{\calA^{z}}\boxtimes\const{\Xz})\to\End(\bR^{i}_{T}\hfH_{!}\Ql\boxtimes\const{\Xz})$ is injective. Therefore the diagram \eqref{d:effcoh} implies that the action of $\cent$ on $\bR^{i}\fHQl|_{\calA^{z}}\boxtimes\const{\Xz}$ factors through $\pi_{0}(\calP/\calA^{z})$. Since $\Ah\times X$ is covered by $\calA^{z}\times\Xz$ for various $z\in X(k)$, the theorem follows.


\subsection{Hecke modification at ``another'' point}
This subsection provides preparatory tools for proving Theorem \ref{th:mainpar}.

Consider the correspondence
\begin{equation}\label{d:twoptmod}
\xymatrix{ & \hHk_{2} \ar[dl]_{\overleftarrow{h_{2}}}\ar[dr]^{\overrightarrow{h_{2}}} & \\
\hMpar\times \Xz\ar[r]_{\hfpar\times \id} & \calB\times(\Xz)^2 & \hMpar\times \Xz\ar[l]^{\hfpar\times \id}}
\end{equation} 
For any scheme $S$, $\hHk_{2}(S)$ is the isomorphism classes of tuples $(x,y,\calE_1,\varphi_1,t_{1,z},\iota_{1,z},\calE^B_{1,x},\calE_2,\varphi_2,t_{2,z},\iota_{2,z},\calE^B_{2,x},\tau)$ where
\begin{itemize}
\item $(x,\calE_i,\varphi_i,t_{i,z},\iota_{i,z},\calE^B_{i,x})\in\hMpar(S)$, for $i=1,2$;
\item $y\in X(S)$ with graph $\Gamma(y)$;
\item $\tau$ is an isomorphism of objects on $S\times\Xz-\Gamma(y)$:
\begin{equation*}
\tau:(\calE_1,\varphi_1,t_{1,z},\iota_{1,z})|_{S\times \Xz-\Gamma(y)}\isom(\calE_2,\varphi_2,t_{2,z},\iota_{2,z})|_{S\times \Xz-\Gamma(y)}.
\end{equation*}
\end{itemize}

For a point $(\hata,x,y)\in(\calB\times(\Xz)^2)(k)$ such that $x\neq y$, the fibers of $\overleftarrow{h_{2}}$ and $\overrightarrow{h_{2}}$ over $(\hata,x,y)$ are isomorphic to the product of $\Spr_{\bG,\gamma_{a,y}}$ and a Springer fiber in $G/B$ corresponding to the image of $\gamma_{a,x}$ in $\frg$ (here $\gamma_{a,x}\in\frg(\calO_{x})$ and $\gamma_{a,y}\in\frg(\calO_{y})$ are Kostant sections of $a$ in the formal neighborhood of $x$ and $y$; see the discussion in \S\ref{ss:prod}). If we restrict to the diagonal $\Delta_{\Xz}:\calB\times \Xz\subset\calB\times (\Xz)^2$, $\hHk_{2}|_{\Delta_{\Xz}}$ is the same as $\hHpar$. The reader may notice the analogy between our situation and the situation considered by Gaitsgory in \cite{Ga}, where he uses Hecke modifications at two points to deform the product $\Grass_G\times G/B$ to $\Fl_G$.

As in the case of $\hHpar$, we have a morphism
\begin{equation*}
\hHk_{2}\to\hMpar\times_{(\calB\times\Xz)}\hMpar\to\tcB\times_{(\calB\times\Xz)}\tcB.
\end{equation*}
Let $\hHk_{2,[e]}$ be the preimage of the diagonal $\tcB\subset\tcB\times_{\calB\times\Xz}\tcB$. One the other hand, we have the Hecke correspondence $\hHk^{\Hit}$ of $\hMHit\times\Xz$ which modifies the Hitchin pair at one point. We have a commutative diagram of correspondences
\begin{equation}\label{d:twoptbch}
\xymatrix{\hHk_{2,[e]}\ar@<-1ex>@/_/[d]\ar@<1ex>@/^/[d]\ar[r]^{\tilp_{\hH}} & \hHk^{\Hit}\ar@<-1ex>@/_/[d]\ar@<1ex>@/^/[d]\\
\hMpar\times \Xz\ar[d]^{\hfpar\times\Xz}\ar[r] & \hMHit\times \Xz\ar[d]^{\hfH\times\id}\\
\tcB\times \Xz\ar[r]^{\tilp\times\id} & \calB\times \Xz}
\end{equation}
where the horizontal maps are given by forgetting the $B$-reductions.

By \cite[Lemma 2.6.1]{GS}, if we restrict the base spaces (the bottom line above) to $\tcBrs\times\Xz\to(\calB\times\Xz)^{\rs}$, all squares in the above diagram are Cartesian. Recall from \cite[Construction 6.6.3]{GS} that for each $W$-orbit $|\lambda|$ in $\xcoch(T)$, we have a graph-like closed substack $\calH_{|\lambda|}\subset\Hecke^{\Hit}$. Similarly, we have $\hH_{|\lambda|}\subset\hHk^{\Hit}$. Let $\hH_{2,|\lambda|}\subset\hHk_{2,[e]}$ be closure of $\tilp^{-1}_{\hH}\hH^{\rs}_{|\lambda|}$. Using the formalism of cohomological correspondences in Appendix \ref{ss:corr}, the fundamental class $[\hH_{2,|\lambda|}]$ induces a map
\begin{equation*}
[\hH_{2,|\lambda|}]_{\#}:\hfpar_{!}\Ql\boxtimes\const{\Xz}\to\hfpar_{!}\Ql\boxtimes\const{\Xz}.
\end{equation*}

Parallel to \cite[Proposition 6.6.4]{GS}, we have
\begin{lemma}\label{l:twoptaction}
The assignment $\cent\ni\Av(\lambda)\mapsto[\hH_{2,|\lambda|}]_{\#}$ extends by linearity to an action of $\cent$ on $(\hfpar_{!}\Ql\boxtimes\const{\Xz})|_{(\calB\times(\Xz)^{2})'}$, where $(\calB\times(\Xz)^{2})'=(\calB\times\Xz)'\times_{\calB}(\calB\times\Xz)'$.
\end{lemma}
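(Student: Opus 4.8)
The plan is to mimic the proof of \cite[Proposition 6.6.4]{GS} (equivalently the proof of the analogous statements \cite[Theorem 3.3.3, Proposition 5.2.1]{GS}) but carried out in the two-point deformation setting. The essential content is that the assignment $\Av(\lambda)\mapsto[\hH_{2,|\lambda|}]_{\#}$ respects the product: for $\lambda,\mu\in\xcoch(T)$ one must show
\begin{equation*}
[\hH_{2,|\lambda|}]_{\#}\circ[\hH_{2,|\mu|}]_{\#}=\sum_{\nu}m^{\nu}_{\lambda\mu}\,[\hH_{2,|\nu|}]_{\#},
\end{equation*}
where $m^{\nu}_{\lambda\mu}$ are the structure constants of $\cent$ in the basis $\{\Av(\lambda)\}$. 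By the composition formalism for cohomological correspondences (Appendix \ref{ss:corr}), the left-hand side is computed by the convolution $\hH_{2,|\lambda|}\circ\hH_{2,|\mu|}$ of correspondences over $\hMpar\times\Xz$, and the claim reduces to an identity of fundamental classes inside this convolution, together with the vanishing of ``spurious'' components of lower dimension. The key geometric input that makes this work is the commutative diagram \eqref{d:twoptbch}: over the locus $\tcBrs\times\Xz\to(\calB\times\Xz)^{\rs}$ all squares are Cartesian, so the Hecke correspondence $\hHk_{2,[e]}$ over the regular-semisimple base is simply the base change of the Hitchin-side Hecke correspondence $\hHk^{\Hit}$, for which the convolution algebra of the classes $[\hH_{|\lambda|}]$ is already known to be $\cent$ by \cite[Proposition 6.6.4, Construction 6.6.3]{GS}.

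The steps I would carry out are as follows. First, restrict everything to the open locus $(\calB\times(\Xz)^{2})'$; on the further restriction to the regular-semisimple locus $(\calB\times(\Xz)^{2})^{\rs}$, invoke \cite[Lemma 2.6.1]{GS} (as recalled after \eqref{d:twoptbch}) to identify $\hHk_{2,[e]}$ with the pullback of $\hHk^{\Hit}$ and thus transport the algebra relation $[\hH^{\rs}_{|\lambda|}]\cdot[\hH^{\rs}_{|\mu|}]=\sum m^{\nu}_{\lambda\mu}[\hH^{\rs}_{|\nu|}]$ from the Hitchin side. Second, check that $\hHk_{2,[e]}|_{(\calB\times(\Xz)^{2})'}$ and the substacks $\hH_{2,|\lambda|}$ satisfy the finiteness/properness hypotheses needed to make $[\hH_{2,|\lambda|}]_{\#}$ well-defined and to make the composition formalism applicable --- this is the analogue of Lemma \ref{l:grlike}, and follows from \cite[Lemma 3.1.4]{GS} together with the codimension estimate built into the definition of $(\Ah\times X)'$ in \S\ref{ss:BXprime} (which is exactly why we pass to the primed locus). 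Third, argue that the correspondence-level relation extends from the regular-semisimple open locus to all of $(\calB\times(\Xz)^{2})'$: both sides of the desired identity of maps $\hfpar_{!}\Ql\boxtimes\const{\Xz}\to\hfpar_{!}\Ql\boxtimes\const{\Xz}$ are induced by algebraic cycles on the convolution space, the relevant cycles agree on a dense open, and a purity/codimension argument (Property (G-2), as in the proof of \cite[Proposition 5.2.1]{GS}) shows no correction terms supported on the complement can appear. Finally, extend from the generators $\Av(\lambda)$ to all of $\cent$ by $\Ql$-linearity, noting that $\{\Av(\lambda)\}_{|\lambda|}$ is a $\Ql$-basis of $\cent$.

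The main obstacle I anticipate is the third step: controlling the components of the convolution $\hH_{2,|\lambda|}\circ\hH_{2,|\mu|}$ that lie over the non-regular-semisimple part of the base. On the Hitchin side this is handled in \cite{GS} by a dimension count using the support estimates for the Hitchin fibration; here one must verify that after the two-point rigidification --- and in particular after imposing that the Hecke modification happens away from $z$ and away from the Borel-reduction point $x$ --- the same codimension bounds survive. Concretely, one needs that over the diagonal stratum $x=y$ the fiber product degenerates exactly to $\hHpar\circ\hHpar$ (recovering the already-known $\cent$-action of the Proposition in \S\ref{ss:BXprime}), while over $x\neq y$ the two Hecke modifications are ``independent'' so that the fibers split as in the product $\Spr_{\bG,\gamma_{a,y}}\times(\text{Springer fiber at }x)$ described after \eqref{d:twoptmod}; the only subtlety is ensuring the closures $\hH_{2,|\lambda|}$ interact correctly across this stratification, which is precisely where Property (G-2) over $(\calB\times(\Xz)^{2})^{\rs}$ (the analogue of Lemma \ref{l:grlike}) does the work. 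Everything else is a routine transcription of the arguments in \cite[\S6.6]{GS}.
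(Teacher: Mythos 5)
Your proposal is correct and follows the same route the paper intends: the paper gives no explicit argument, deferring entirely to \cite[Proposition 6.6.4]{GS} (``parallel to''), and your sketch reproduces the blueprint of that cited proof. In particular, the three ingredients you isolate --- identification of $\hHk_{2,[e]}$ with the base change of $\hHk^{\Hit}$ over the regular-semisimple locus via the Cartesian squares in \eqref{d:twoptbch}, a Property (G-2) argument to rule out corrections from the complement of the regular-semisimple locus (the analogue of Lemma~\ref{l:grlike}), and linear extension from the basis $\{\Av(\lambda)\}$ of $\cent$ --- are exactly those used for the single-point versions in \cite[Theorem~3.3.3, Propositions~5.2.1 and 6.6.4]{GS}.
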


Let $p_2:\calB\times(\Xz)^2\to\calB$ be the projection. Similar argument as in \S\ref{ss:proofHit} proves the following statement. Note that the argument in \S\ref{ss:proofHit} did not use more information than the explicit knowledge of the $\cent$-action over the locus $(\calB\times\Xz)^{\rs}$.

\begin{lemma}\label{l:twofactor}
The action of $\cent$ on $\bR^{i}_{S}\hfpar_{!}\Ql\boxtimes\const{\Xz}|_{(\calB\times(\Xz)^{2})'}$ constructed in Lemma \ref{l:twoptaction} factors through the action of $p_{2}^*\Ql[\pi_0(\hP/\calB)]$ (which is also the pull-back of $\Ql[\pi_{0}(\calP/\Ah)]$) on the first factor of $\bR^{i}_{S}\hfpar_{!}\Ql\boxtimes\const{\Xz}|_{(\calB\times(\Xz)^{2})'}$ via the homomorphism \eqref{centB}.
\end{lemma}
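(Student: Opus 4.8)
The plan is to repeat the argument of \S\ref{ss:proofHit} essentially verbatim, with the base $\calB$ there replaced by $\calB\times\Xz$ (the first $\Xz$ being the Borel-reduction variable $x$), the moduli space $\hMHit$ replaced by $\hMpar$, the ``Hecke variable'' $\Xz$ replaced by the second $\Xz$ (the modification variable $y$), and the Hitchin Hecke correspondences $\hH_{|\lambda|}$ replaced by $\hH_{2,|\lambda|}$. As the remark preceding the lemma points out, the argument of \S\ref{ss:proofHit} used nothing beyond the explicit description of the $\cent$-action over the regular semisimple locus; so the whole of the lemma will follow once one has the corresponding explicit description of the operators $[\hH_{2,|\lambda|}]_{\#}$ over the locus where $a$ is regular semisimple at $y$.

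For this, I would first restrict the base to the open locus where $a(y)\in\frc^{\rs}_{D}$. By \cite[Lemma 2.6.1]{GS} --- equivalently, by the fact that the squares in \eqref{d:twoptbch} become Cartesian over that locus --- the self-correspondence $\hH_{2,|\lambda|}$ of $\hMpar\times\Xz$ is there the base change along $\tilp_{\hH}$ of the Hitchin Hecke correspondence $\hH_{|\lambda|}\subset\hHk^{\Hit}$. Since over the regular semisimple locus $\hH_{|\lambda|}$ is graph-like and implements the action on $\hMHit$ of the sections $s(\lambda',-)$ of the rigidified Picard $\hP$, and this action is compatible (via the base change in \eqref{d:twoptbch}) with the $\hP$-action on $\hMpar$ over $\calB$, one deduces that $\sum_{\lambda'\in|\lambda|}[\hH_{2,|\lambda|}]_{\#}$, pulled back to the relevant cameral cover in the variable $y$, is the cap product with $\sum_{\lambda'\in|\lambda|}s_{\lambda',*}$ of the fundamental class of that cover --- exactly the situation of \eqref{tilpull}--\eqref{tilpadj}. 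One should also record here that $\hH_{2,|\lambda|}$, and the algebraic subspaces of it that enter, have Property (G-2) over $(\calB\times(\Xz)^{2})'$, by the same argument as Lemma \ref{l:grlike} (via \cite[Lemma 3.1.4]{GS}), so that the cohomological-correspondence formalism of Appendix \ref{ss:corr} applies throughout.

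With this in hand the argument of \S\ref{ss:proofHit} goes through word for word. One forms the analog of the commutative diagram \eqref{commm} over $\calB\times\Xz$, applies the truncation functor $\tau_{\leq0}$ to the homology complexes of the cameral cover, of $\Xz$, and of $\hP$, and the functor $\tau_{\leq i}p_{S,*}$ to $\hfpar_{!}\Ql$; passing to zeroth homology sheaves --- using $\homo{0}{\hP/\calB}=\Ql[\pi_{0}(\hP/\calB)]$ and the connectedness of the fibers of the cameral cover over $\calB\times\Xz$ after taking $W$-coinvariants, i.e.\ the analogs of \eqref{homo0} and \eqref{Wcoinv} --- one obtains the analog of \eqref{d:effcoh}: a commutative triangle exhibiting the degree-$0$ action $\alpha_{0}$ of $\cent$ on $\bR^{i}_{S}\hfpar_{!}\Ql\boxtimes\const{\Xz}$ as $\sigma_{\calB}\otimes\id$ followed by the cap product with $\Ql[\pi_{0}(\hP/\calB)]$. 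Exactly as in \S\ref{ss:proofHit}, $\alpha_{0}$ coincides with the $\cent$-action constructed in Lemma \ref{l:twoptaction}; and pulling the cap product back along $p_{2}$ identifies it with the action of $p_{2}^{*}\Ql[\pi_{0}(\hP/\calB)]$ on the first factor of $\bR^{i}_{S}\hfpar_{!}\Ql\boxtimes\const{\Xz}$, with $\sigma_{\calB}$ the map \eqref{centB}. This is the assertion of the lemma.

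The only point carrying genuine content is the first step: verifying that the Cartesian squares of \eqref{d:twoptbch} really do reduce the two-point operators $[\hH_{2,|\lambda|}]_{\#}$ over the regular semisimple locus to the one-point Picard cap products analyzed in \S\ref{ss:proofHit}, and keeping the roles of the three marked points --- the rigidification point $z$, the Borel-reduction point $x$, and the modification point $y$ --- straight throughout. Once that compatibility is secured, the remainder is the same diagram chase as in the proof of Theorem \ref{th:Hit}, with no new ingredient.
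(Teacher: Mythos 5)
Your proposal is correct and matches the paper's intent exactly: the paper's proof of this lemma is the one-line remark that the argument of Theorem~\ref{th:Hit} in \S\ref{ss:proofHit} only used the explicit description of the $\cent$-action over the regular semisimple locus, and hence carries over with $\calB$ replaced by $\calB\times\Xz$, $\hMHit$ by $\hMpar$, and the cap-product description of $[\hH_{|\lambda|}]_{\#}$ replaced by the one for $[\hH_{2,|\lambda|}]_{\#}$ obtained from the Cartesian squares of \eqref{d:twoptbch}. You have spelled out precisely the substitutions and the one step with content (the base-change identification over the locus where $a(y)$ is regular semisimple), so nothing further is needed.
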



\subsection{Proof of Theorem \ref{th:mainpar}}\label{ss:proofmain}
We consider two actions of $\cent$ on $\hfpar_{!}\Ql|_{(\calB\times\Xz)'}$:
\begin{itemize}
\item The action $\alpha_{1}$ of $\cent$ on $\hfpar_{!}\Ql$ is given by the restriction of the $\tilW$-action constructed in \S\ref{ss:action}. This is the action involved in the statement of Theorem \ref{th:mainpar}.

\item The action $\alpha_{\Delta}$ of $\cent$ on $\hfpar_{!}\Ql$ is given by restricting the action constructed in Lemma \ref{l:twoptaction} via $\Delta_{\Xz}:(\calB\times\Xz)'\hookrightarrow(\calB\times (\Xz)^2)'$. Note that $\hfpar_{!}\Ql=\Delta_{\Xz}^*(\hfpar_{!}\Ql\boxtimes\const{\Xz})$.
\end{itemize}

We claim that the actions $\alpha_{1}$ and $\alpha_\Delta$ are the same. On the one hand, by Lemma \ref{l:twoptaction}, the action of $\Av(\lambda)$ on $\hfpar_{!}\Ql\boxtimes\const{\Xz}|_{(\calB\times(\Xz)^{2})'}$ is given by the class $[\hH_{2,|\lambda|}]$. Recall from \cite[Appendix A.2]{GS} the notion of the pullback of a cohomological correspondence. By \cite[Lemma A.2.1]{GS}, the action of $\alpha_\Delta(\Av(\lambda))$ is given by the pullback class $\Delta_{\Xz}^*[\hH_{2,|\lambda|}]\in\Corr(\hH';\Ql,\Ql)$, where $\hH'\subset\hHpar$ is a large enough closed sub-correspondence with both maps to $\hMpar$ proper. For the notation $\Corr(-;-,-)$, see Appendix \ref{ss:corr}. Let $(\calB\times(\Xz)^{2})^{\rs}=(\calB\times\Xz)^{\rs}\times_{\calB}(\calB\times\Xz)^{\rs}$, and $\Delta^{\rs}_{\Xz}:(\calB\times\Xz)^{\rs}\hookrightarrow(\calB\times(\Xz)^{2})^{\rs}$ be the restriction of $\Delta_{\Xz}$. Over $(\calB\times(\Xz)^{2})'$, $\hH^{\rs}_{2,|\lambda|}$ is finite \'etale over $\hMpar\times\Xz$ via both projections, therefore $\Delta_{\Xz}^{\rs,*}[\hH^{\rs}_{2,|\lambda|}]=[\hH^{\rs}_{2,|\lambda|}|_{\Delta^{\rs}_{\Xz}}]$, the fundamental class of the base change of $\hH^{\rs}_{2,|\lambda|}$ via $\Delta^{\rs}_{\Xz}$. However, restricting the left column of the diagram \eqref{d:twoptbch} to $\tcBrs\hookrightarrow\tcB\times\Xz$ (as the graph of the projection $\tcBrs\to\Xz$), we get the following Cartesian diagram
\begin{equation*}
\xymatrix{\bigsqcup_{\lambda'\in|\lambda|}\hH^{\rs}_{\lambda'}\ar@<-1ex>@/_/[d]\ar@<1ex>@/^/[d]\ar[r]  & \hH^{\rs}_{|\lambda|}\ar@<-1ex>@/_/[d]\ar@<1ex>@/^/[d]\\
\hMparrs\ar[d]^{\hfpar}\ar[r] & \hMHit\times\Xz|_{(\calB\times\Xz)^{\rs}}\ar[d]^{\hfH\times\id}\\
\tcBrs\ar[r]^{q^{\rs}} & (\calB\times\Xz)^{\rs}}
\end{equation*}
Hence
\begin{equation}\label{twoclassrs}
\Delta_{\Xz}^{\rs,-1}(\hH^{\rs}_{2,|\lambda|})=\bigsqcup_{\lambda'\in|\lambda|}\hH^{\rs}_{\lambda'}.
\end{equation}

On the other hand, the action of $\alpha_{1}(\Av(\lambda))$ is given by the class $\sum_{\lambda'\in|\lambda|}[\hH_{\lambda'}]\in\Corr(\hH';\Ql,\Ql)$ (enlarge the correspondence $\hH'$ defined before to contain all the $\hH_{\lambda'}$ with $\lambda'\in|\lambda|$). Over $(\calB\times\Xz)'$, both classes $\Delta^{*}_{\Xz}[\hH_{2,|\lambda|}]$ and $\sum_{\lambda'\in|\lambda|}[\hH_{\lambda'}]$ are supported on $\hH'$, which has property (G-2) by Lemma \ref{l:grlike}. Applying Lemma \ref{l:openpart}, the equality \eqref{twoclassrs} implies
\begin{equation*}
(\Delta^{*}_{\Xz}[\hH_{2,|\lambda|}])_{\#}=\sum_{\lambda'\in|\lambda|}[\hH_{\lambda'}]_{\#}
\end{equation*}
as endomorphisms of $\hfpar_{!}\Ql|_{(\calB\times\Xz)'}$. This proves the claim.

By Lemma \ref{l:twofactor}, the action $\alpha_\Delta$ of $\cent$ on $\bR^{i}_{S}\hfpar_{!}\Ql|_{(\calB\times\Xz)'}=\Delta_{\Xz}^{*}(\bR^i_{S}\hfpar_{!}\Ql\boxtimes\const{\Xz}|_{(\calB\times(\Xz)^{2})'})$ factors through \eqref{centB}. Since the action of $\alpha$ on $\bR^{i}_{S}\hfpar_{!}\Ql|_{(\calB\times\Xz)'}$ is the same as $\alpha_{\Delta}$, it also factors through $\eqref{centB}$.

Again, for the purpose of proving Theorem \ref{th:mainpar}, we take $S=T$ and argue as in the final part of \S\ref{ss:proofHit}. Taking the stalk at a point of $(\calB\times\Xz)'$, we conclude:

\begin{cor}\label{c:mainstalk}
Let $S\subset T$ be an algebraic subgroup and $(\hata,x)\in(\calB\times\Xz)'(k)$. Then for each $i$ the action of $\cent$ on $\cohoc{i}{[\hMpar_{\hata,x}/S]}$ factors through the action of $\pi_{0}(\calP_{a})$ via $\sigma_{a}$. 
\end{cor}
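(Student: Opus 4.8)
The plan is to obtain Corollary~\ref{c:mainstalk} by passing to the stalk at $(\hata,x)$ in the sheaf-level factorization established in \S\ref{ss:proofmain}. Recall that the argument there shows that the two $\cent$-actions $\alpha_{1}$ (the spherical part of the $\tilW$-action of \S\ref{ss:action}) and $\alpha_{\Delta}$ (the restriction along $\Delta_{\Xz}$ of the two-point Hecke action of Lemma~\ref{l:twoptaction}) on $\hfpar_{!}\Ql|_{(\calB\times\Xz)'}$ coincide, and that by Lemma~\ref{l:twofactor} the action $\alpha_{\Delta}$ on $\bR^{i}_{S}\hfpar_{!}\Ql|_{(\calB\times\Xz)'}$ factors through $\Ql[\pi_{0}(\hP/\calB)]$ via the map $\sigma_{\calB}$ of \eqref{centB}, for any algebraic subgroup $S\subset T$. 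Taking the stalk at $(\hata,x)$ of this commuting triangle of sheaves of modules on $\calB\times\Xz$ should immediately give the statement, once the three stalks have been identified.

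First I would use proper base change for the $!$-pushforward along $\hfpar$, in the locally-of-finite-type formalism of Appendix~\ref{a:lft} and exactly as recorded for $\hfH$ in \S\ref{ss:proofHit}, to identify $(\bR^{i}_{S}\hfpar_{!}\Ql)_{(\hata,x)}\cong\cohoc{i}{[\hMpar_{\hata,x}/S]}$; this is harmless because $S$ acts trivially on $\calB\times\Xz$, so $\bR^{i}_{S}\hfpar_{!}\Ql$ is an ordinary sheaf whose stalks are the $S$-equivariant compactly supported cohomology groups of the fibers. Next I would identify the stalk of $\Ql[\pi_{0}(\hP/\calB)]$ at $\hata$ with $\Ql[\pi_{0}(\hP_{\hata})]$ and invoke Remark~\ref{r:B}: since $\hP\to\calB\times_{\Ah}\calP$ is a $T$-torsor, $\pi_{0}(\hP_{\hata})\cong\pi_{0}(\calP_{a})$, so the induced action on the stalk is precisely the action of $\pi_{0}(\calP_{a})$ in the statement. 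Finally I would check that the stalk of $\sigma_{\calB}$ at $\hata=(a,t_{z})$ is identified with $\sigma_{a}$, the stalk at $a$ of the homomorphism $\sigma$ of \eqref{globalcentpi}; this follows from the construction in \S\ref{ss:defineglobalsigma} together with Remark~\ref{r:B}, since $\sigma_{\calB}$ is built by the same recipe as $\sigma$ after replacing $(\Ah,\tcArs,\calP)$ by $(\calB,\tcBrs,\hP)$, and $\pi_{0}(\hP/\calB)$ is the pullback of $\pi_{0}(\calP/\Ah)$ along $\calB\to\Ah$.

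The argument is essentially formal once \S\ref{ss:proofmain} is in hand, so I do not anticipate a serious obstacle; the point that genuinely requires attention is that one is entitled to specialize the sheaf-level statement at $(\hata,x)$ in the first place, which is exactly why the hypothesis $(\hata,x)\in(\calB\times\Xz)'$ is imposed --- this is the open locus on which the codimension estimate underlying Property~(G-2) holds, hence where Lemma~\ref{l:grlike} (and therefore the coincidence $\alpha_{1}=\alpha_{\Delta}$ and Lemma~\ref{l:twofactor}) is available. A secondary bookkeeping point is the compatibility of taking stalks with the truncation functors $\tau_{\leq i}p_{S,*}$ used in \S\ref{ss:proofHit}, which is innocuous since $p_{S,*}$ acts only on the coefficients and not on the base $\calB\times\Xz$.
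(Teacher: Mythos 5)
Your proposal is correct and matches the paper's argument, which is precisely to take the stalk at $(\hata,x)$ of the sheaf-level factorization of $\alpha_1=\alpha_\Delta$ on $\bR^i_S\hfpar_!\Ql|_{(\calB\times\Xz)'}$ established in \S\ref{ss:proofmain}, with the three stalk identifications (the fiber of $\bR^i_S\hfpar_!\Ql$, the fiber of $\Ql[\pi_0(\hP/\calB)]$ via Remark~\ref{r:B}, and the compatibility of $\sigma_\calB$ with $\sigma_a$) all as you describe. The paper does not spell out these bookkeeping steps, so your write-up is a reasonable expansion of its one-line deduction.
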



\section{From global to local}
In this section, we prove the local main theorem (Theorem \ref{th:mainloc}) and Theorem \ref{th:mainhomo}.

\subsection{The local data} Without loss of generality, we may assume $G$ is of adjoint type.

We use the notations from \S\ref{ss:definesigma}. Let $\gamma\in\frg(F)$ be a regular semisimple element. Assume $a(\gamma)\in\frc(\calO_{F})$, otherwise the affine Springer fiber is empty. Recall the local cameral cover diagram \eqref{localcameral}. As before, we choose a component $\Spec\tilO^{!}_{F}$ of $\Spec\tilO_{F}$ with stabilizer $W^{!}$. By our assumption on char$(k)$, $F^{!}=\Frac(\tilO^{!}_{F})$ is a tamely ramified extension of $F$, hence $W^{!}=\Gal(F^{!}/F)$ is a cyclic subgroup of $W$. Fix a generator $w$ of $W^{!}$ and a primitive $m$-th root of unity $\zeta\in\mu_{m}(k)$, where $m=\#W^{!}$. 

\subsection{The global data}
Let $X=\PP^1$. Fix $z\in X(k)\backslash\{0,\infty\}$. Let $\tilX$ be the contracted product $\tilX=W\twtimes{W^{!}}\PP^1$, where $w$ acts on $\PP^1$ by $t\mapsto\zeta t$. We take $\tilX^{!}$ to be the component $\{1\}\times\PP^{1}\subset\tilX$. The morphism $\pi:\tilX\to\tilX^{!}\sslash W^{!}=X$ is a branched $W$-cover. The point $0$ has a unique preimage in $\tilX^{!}$, which we denote by $0^{!}$. We fix a $W^{!}$-equivariant isomorphism
\begin{equation*}
\tilO^{!}_{F}\cong\calO_{\tilX^{!},0^{!}}
\end{equation*}
This induces a $W$-equivariant isomorphism $\tilO_{F}\cong\calO_{\tilX,0}$ (the latter being the completion of $\tilX$ along $\pi^{-1}(0)$), and also an isomorphism $\calO_{F}\cong\calO_{X,0}$. 

\begin{lemma}\label{l:appr}
For any $N>0$, there exists an integer $d=d(N)$ such that for any divisor $D$ on $X$, disjoint from $\{0,z,\infty\}$ and $\deg(D)\geq d$, there exists a section $a:X\to\frc_{D}$ such that
\begin{enumerate}
\item There is a $W$-equivariant birational morphism $\tilX\to X_{a}$ (i.e., $\tilX$ is the normalization of the cameral curve $X_{a}$);
\item Let $a_{0}$ be the restriction of $a$ to $\Spec\calO_{X,0}$. Then $a_{0}\equiv a(\gamma)\mod\varpi^{N}$ (since $D$ is disjoint from $0$, $a_{0}$ is an element in $\frc(\calO_{X,0})=\frc(\calO_{F})$);
\item $a(z)\in\frc^{\rs}$, i.e., $a\in\calA^{z}$.
\end{enumerate}
\end{lemma}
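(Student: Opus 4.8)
The plan is to produce $a$ from the prescribed cameral cover $\tilX$, working on its distinguished component $\tilX^{!}$. The key reduction: any $W$-equivariant morphism $\tilde a\colon\tilX\to\pi^{*}\frt_{D}$ over $\pi$ yields, on composing with $\pi^{*}\frt_{D}\to\pi^{*}\frc_{D}$ (a $W$-invariant map, since $W$ acts trivially on $\frc$) and descending along the $W^{!}$-quotient $\tilX^{!}\to X$ — legitimate because $\mathrm{char}\,k\nmid\#W^{!}$ — a section $a\in H^{0}(X,\frc_{D})$, and then the cameral curve $X_{a}$ of $a$ receives a tautological $W$-equivariant morphism $\tilX\to X_{a}$, $y\mapsto(\pi(y),\tilde a(y))$. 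Since $\tilX=W\twtimes{W^{!}}\tilX^{!}$, such a $\tilde a$ amounts to its restriction $\tilde a^{!}\colon\tilX^{!}\to\pi|_{\tilX^{!}}^{*}\frt_{D}$, which must be $W^{!}$-equivariant; and because $D$ avoids the branch points $0,\infty$ of $\pi$, the target is the vector bundle $\frt\otimes\calO_{\PP^{1}}(me)$ on $\tilX^{!}\cong\PP^{1}$, where $m=\#W^{!}$ and $e=\deg D$. So everything comes down to choosing a suitable $W^{!}$-equivariant global section $\tilde a^{!}$ of $\frt\otimes\calO_{\PP^{1}}(me)$.

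Fix the coordinate $t$ on $\tilX^{!}=\PP^{1}$ with $0^{!}=\{t=0\}$, $\infty^{!}=\{t=\infty\}$ (so $w\colon t\mapsto\zeta t$) and trivialize $\calO_{\PP^{1}}(me)$ away from $\infty^{!}$, so that a global section is $s=\sum_{j=0}^{me}v_{j}t^{j}$ with $v_{j}\in\frt$; then $W^{!}$-equivariance of $s$ is exactly the condition $v_{j}\in\frt^{w=\zeta^{j}}$ for all $j$. On the other hand, unwinding the identification $\tilO^{!}_{F}\cong\calO_{\tilX^{!},0^{!}}=k[[t]]$, the tautological $W^{!}$-equivariant map $\tilde a^{!}_{0}\colon\Spec\tilO^{!}_{F}\to\frt$ refining $a(\gamma)$ has Taylor expansion $\tilde a^{!}_{0}=\sum_{j\ge0}v^{0}_{j}t^{j}$ with $v^{0}_{j}\in\frt^{w=\zeta^{j}}$ — this is just the infinitesimal form of its equivariance. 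Hence, as soon as $e\ge N$, we may set $v_{j}:=v^{0}_{j}$ for $0\le j<mN$ and leave the remaining $v_{j}$ ($mN\le j\le me$) free within their eigenspaces. This defines $\tilde a^{!}$, hence $\tilde a$, hence $a\in H^{0}(X,\frc_{D})$; and since $\tilde a^{!}\equiv\tilde a^{!}_{0}\bmod t^{mN}$ while $\varpi$ pulls back to $t^{m}$ on $\tilX^{!}$, passing to invariant polynomials and descending gives $a_{0}\equiv a(\gamma)\bmod\varpi^{N}$, which is (2).

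Condition (1) is then automatic. The morphism $\tilX\to X_{a}$ above is $W$-equivariant and finite (both sides are finite over $X$ compatibly), and over the dense open $U\subset X$ where $a\in\frc^{\rs}_{D}$ and $\pi$ is \'etale it is a $W$-equivariant map of $W$-torsors over the connected scheme $U$, hence an isomorphism there; as $\tilX$ is smooth, this exhibits $\tilX$ as the normalization of $X_{a}$, provided $U$ is dense, i.e. $a\in\Ah$. But the latter already follows from (2): near $0^{!}$ one has $\tilde a^{!}\equiv\tilde a^{!}_{0}\bmod t^{mN}$, and $\tilde a^{!}_{0}$ lands in $\frt^{\rs}$ on a punctured neighbourhood of $0^{!}$ because $\gamma$ is regular semisimple, so $\tilde a$ — and hence $a$ — is generically regular semisimple. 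For (3), $a(z)$ is the image under $\frt\to\frc$ of $\tilde a^{!}(y_{z})$ for any $y_{z}\in(\pi|_{\tilX^{!}})^{-1}(z)$; since $z\notin\{0,\infty\}$ we have $y_{z}\in\Gm$, so $y_{z}^{j}\ne0$ for all $j$, and the assignment $(v_{j})_{mN\le j\le me}\mapsto\sum_{j=mN}^{me}v_{j}y_{z}^{j}$ is surjective onto $\frt=\bigoplus_{b}\frt^{w=\zeta^{b}}$ once the indices $mN,\dots,me$ represent every residue class modulo $m$, i.e. once $e\ge N+1$. So we may additionally arrange $\tilde a^{!}(y_{z})\in\frt^{\rs}$, whence $a(z)\in\frc^{\rs}$. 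Taking $d(N):=N+1$ (and, if one insists on the standing hypothesis $D=2D'$ of degree $\ge 2g_{X}=0$, replacing $D$ by any divisor of even degree $\ge d(N)$ disjoint from $\{0,z,\infty\}$) finishes the proof.

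The only genuine work is the bookkeeping in the first two paragraphs: translating the completed-local datum $\tilde a^{!}_{0}$ at $0^{!}$ into an honest global section of a line bundle on $\PP^{1}$, recording $W^{!}$-equivariance as an eigenvalue condition on Taylor coefficients, and checking that the degree count leaves room to prescribe the first $mN$ coefficients while still retaining one free coefficient in each eigenspace of $w$. The descent, the torsor argument for (1), and the genericity argument for (3) are then all formal.
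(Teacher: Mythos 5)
Your argument follows the same reduction as the paper's proof: finding $a$ amounts to producing a $W$-equivariant map $\tilde a\colon\tilX\to\frt_D$, which (since $\tilX=W\twtimes{W^{!}}\tilX^{!}$) is the same as a $W^{!}$-equivariant section of $\frt\otimes\pi^{*}\calO_{X}(D)$ on $\tilX^{!}\cong\PP^{1}$; the paper packages this as a section of the $W$-invariants bundle $\calL(D)=(\frt\otimes\pi_*\calO_{\tilX})^{W}\otimes\calO_X(D)$ and then simply invokes ``$\deg D$ large enough'' for the jet interpolation at $0$ and $z$. What you add is the explicit computation on $\PP^{1}$: identifying $W^{!}$-equivariance with the eigenvalue condition $v_{j}\in\frt^{w=\zeta^{j}}$ on Taylor coefficients, which makes the degree count transparent and yields the concrete bound $d(N)=N+1$. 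This is correct and genuinely more informative than the paper's ``obviously.''

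One small slip worth correcting. The identification $\calO_{\PP^{1}}(me\cdot\infty^{!})\cong\pi^{*}\calO_{X}(D)$ you use to write a section as $\sum_{j=0}^{me}v_{j}t^{j}$ is not canonical; its restriction to a neighbourhood of $0^{!}$ differs from the identity by multiplication by a $W^{!}$-invariant unit $u\in\calO_{F}^{\times}$ (namely the pullback of a rational function on $X$ with divisor $D-e\cdot\infty$). Consequently the low-order coefficients you should prescribe are those of $u^{-1}\tilde a^{!}_{0}$ rather than of $\tilde a^{!}_{0}$ itself; otherwise the asserted congruence $a_{0}\equiv a(\gamma)\bmod\varpi^{N}$ need not hold. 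Since $u$ is $W^{!}$-invariant, the corrected coefficients still lie in the required eigenspaces, so this is a harmless fix and the rest of the argument — the torsor argument for (1), the residue-class count for (3) — goes through unchanged.
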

\begin{proof}
To give such a section $a$ is equivalent to giving a $W$-equivariant morphism $\tila:\tilX\to\frt_{D}$ whose induced map between $W$-quotients satisfies local conditions (2) and (3) (which is then automatically birational because $a$ is regular semisimple at $z$).

Let $\calL$ be the following coherent sheaf on $X$
\begin{equation*}
\calL=(\frt\otimes\pi_{*}\calO_{\tilX})^{W}.
\end{equation*}
Since char$(k)$ does not divide $\#W$, this is a vector bundle over $X$. First fix any divisor $D$ on $X$ not containing $\{0,z,\infty\}$. Giving a $W$-equivariant map $\tilX\to\frt_{D}$ is equivalent to giving a section $\tila\in\cohog{0}{X,\calL(D)}$. The conditions (2)(3) will be satisfied if we can find such a section $\tila$ which approximates given sections at $0$ and $z$ to high order. This can obviously be achieved provided $\deg(D)$ is large.
\end{proof}

Let $N$ be the integer in Proposition \ref{p:constancy}: whenever $a(\gamma')\equiv a(\gamma)\mod\varpi^{N}$, then Theorem \ref{th:mainloc} is true for $a(\gamma')$ if and only if it is true for $a(\gamma)$. Applying Lemma \ref{l:appr} to this $N$, we obtain a divisor $D=2D'$ on $X$ and $a:X\to\frc_{D}$ satisfying the properties therein. By choosing $\deg(D)$ large enough, we can make sure that $(\Ah\times X)'\supset(\Ah\times X)_{\delta}$ for $\delta=\delta(a;0)$, hence $(a,0)\in(\Ah\times X)'$ (see the discussion in \S\ref{ss:BXprime}). We use this $D$ to define the rigidified parabolic Hitchin moduli $\hMpar$. With this $a$ we have the cameral curve $q_{a}:X_{a}\to X$. The image of $\tilX^{!}$ in $X_{a}$ is a component $X^{!}_{a}$. Let $\Sing(a)\subset X$ be the locus where $q_{a}:X_{a}\to X$ is not \'etale, together with the point $\{0\}$.

\subsection{Relation between local and global Picard}\label{ss:localP}
Let $x\in X(k)$. The discussion in \S\ref{ss:localPic} can be applied to $\gamma(a,x)\in\frg(F_{x})$, the Kostant section of the restriction of $a$ to the formal disk $\Spec\calO_{x}$. In particular, we obtain $\calP_{a,x}, P_{a,x}, \Lambda_{a,x}$ and $R_{a,x}$, and an exact sequence
\begin{equation}\label{newlocalP}
1\to R_{a,x}\to P_{a,x}\to\Lambda_{a,x}\to1.
\end{equation}
Note that $R_{a,x}$ is the same group scheme which appeared in \eqref{exactP}. For each $x\in X(k)\backslash\{z\}$, we have a commutative diagram
\begin{equation}\label{PaxP}
\xymatrix{P_{a,x}\ar[r]\ar[d] & \hP_a\ar[d]\\ \Lambda_{a,x}\ar[r] & P^{\flat}_a}
\end{equation}

Let $\hata=(a,t_{z})\in\calB(k)$ lifting $a$ such that $t_{z}\in X^{!}_{a}$. The choice of $t_{z}$ gives an identification $J_{a,z}\isom T$. Recall the exact sequence \eqref{exactP}. By \cite[\S 4.11]{NgoFL} we have $\cohog{0}{X,J^{\flat}_{a}}=T^{w}$, hence the first arrow in \eqref{exactP} gives a map $T^{w}\to J_{a,z}=T$, which is easily checked to be the canonical inclusion $T^{w}\hookrightarrow T$. Let $S\subset T$ be a torus which is complementary to the neutral component of $T^{w}$, so that $S\cap T^{w}$ is a finite diagonalizable group over $k$. By the assumption on char$(k)$, $S\cap T^{w}$ is in fact discrete.

Combining the exact sequences \eqref{exactP}, \eqref{newlocalP} and the diagram \eqref{PaxP}, we get a map between exact sequences
\begin{equation}\label{prodP}
\xymatrix{1\ar[r] & S\times\prod_{x\in\Sing(a)}R_{a,x}\ar[r]\ar[d]^{\alpha} & S\times\prod_{x\in\Sing(a)}P_{a,x}\ar[r]\ar[d]^{\beta} & \prod_{x\in\Sing(a)}\Lambda_{a,x}\ar[d]^{\gamma}\ar[r] & 1\\
1\ar[r] & (J_{a,z}\times\prod_{x\in\Sing(a)}R_{a,x})/T^{w}\ar[r] & \hP_{a}\ar[r] & P^{\flat}_{a}\ar[r] & 1}
\end{equation}
By the choice of $S$, $\alpha$ is surjective. Let $P^{\ker}_{a}=\ker(\beta)$ and $\Lambda_{a}^{\ker}=\ker(\gamma)$. The snake lemma gives an exact sequence between the kernels
\begin{equation}\label{Pker}
1\to S\cap T^{w}\to P_{a}^{\ker}\to\Lambda_{a}^{\ker}\to1.
\end{equation}
Here $S\cap T^{w}$ is viewed as a subgroup of $T^{w}=\cohog{0}{X,J^{\flat}_{a}}$, hence maps diagonally into $S\times\prod_{x}R_{a,x}$. Note that $S\cap T^{w}$ is a discrete group scheme, so is $P^{\ker}_{a}$, hence we may identify $P^{\ker}_{a}$ with its $k$-points.

By Lemma \ref{l:lattice} below, the map $\gamma$ in \eqref{prodP} is also surjective. Hence so is $\beta$ and we have an exact sequence
\begin{equation}\label{addS}
1\to P^{\ker}_{a}\to S\times\prod_{x\in\Sing(a)}P_{a,x}\to\hP_{a}\to1
\end{equation}


\begin{lemma}\label{l:lattice}      
In our situation, the coarse moduli space $P^{\flat}_{a}$ of $\calP^{\flat}_{a}$ is a discrete group. The natural map $\iota_{0}:\Lambda_{a,0}\to P^{\flat}_a$ is an isomorphism, and both groups are canonically isomorphic to $\xcoch(T)_{w}/\textup{torsion}$.
\end{lemma}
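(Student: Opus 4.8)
The plan is to compute $\Lambda_{a,0}$ and $P^{\flat}_{a}$ separately, identify each canonically with $\xcoch(T)_{w}/\textup{torsion}$, and then recognize $\iota_{0}$ as the resulting identification; the geometric input driving everything is that the normalization $\tilX=W\twtimes{W^{!}}\PP^{1}$ of the cameral curve is a disjoint union of copies of $\PP^{1}$, which forces all the Picard data to be discrete and rigid. For the local side, by construction the inertia group of $q_{a}\colon X_{a}\to X$ at $0$ is exactly $W^{!}=\langle w\rangle$, cyclic of order $m$ and tame by our hypothesis on $\textup{char}(k)$. Ng\^o's local analysis at $0$ — the one already used in \S\ref{ss:localPic} and \S\ref{ss:definesigma}, namely \cite[Lemme 3.8.1, Prop. 3.9.2]{NgoFL} — then shows that $\Lambda_{a,0}$, the reduced structure of $\calP^{\flat}_{a,0}$, is free abelian and canonically isomorphic to $\xcoch(T)_{w}/\textup{torsion}$; in particular it has rank $r:=\dim_{\QQ}(\xcoch(T)\otimes\QQ)^{w}$.

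For the global side I would invoke Ng\^o's description of the N\'eron-model Picard stack via the cameral cover \cite[\S4.8, Cor. 4.8.1]{NgoFL}: up to connected correction terms supported at the bad points (governed by the $R_{a,x}$), $\calP^{\flat}_{a}$ is the stack of $W$-equivariant $T$-torsors on the normalization $\tilX$. Since $\tilX\cong\bigsqcup_{W/W^{!}}\PP^{1}$ and $\Pic(\PP^{1})\cong\ZZ$ is discrete, the set of components of $\Bun_{T}(\tilX)$ is the discrete group $\prod_{W/W^{!}}\xcoch(T)$ of multidegrees, on which $W$ acts by permuting $W/W^{!}$ by left translation (the base coset $eW^{!}$ having stabilizer $W^{!}$) together with the Weyl action on each factor. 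Because the $R_{a,x}$ are connected they do not change the set of components, so $P^{\flat}_{a}$ is discrete — which is the first assertion — and a $W$-invariant multidegree is determined by its $eW^{!}$-component, which must lie in $\xcoch(T)^{W^{!}}=\xcoch(T)^{w}$. Thus $P^{\flat}_{a}\cong\xcoch(T)^{w}$, and composing with the isomorphism $\xcoch(T)^{w}\isom\xcoch(T)_{w}/\textup{torsion}$ induced by $\xcoch(T)^{w}\hookrightarrow\xcoch(T)\twoheadrightarrow\xcoch(T)_{w}$ (an isomorphism modulo torsion, since both groups have rank $r$ and kernel and cokernel are killed by the norm $1+w+\cdots+w^{m-1}$) identifies $P^{\flat}_{a}$ with $\xcoch(T)_{w}/\textup{torsion}$.

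It remains to see that $\iota_{0}\colon\Lambda_{a,0}\to P^{\flat}_{a}$ is an isomorphism. By construction it is restriction of a global $J^{\flat}_{a}$-torsor to the formal neighborhood of $0$; under the identifications above it sends the local degree at $0^{!}$, the unique preimage of $0$ in $\tilX^{!}$, to the global multidegree evaluated on the component $\tilX^{!}$. Since $\tilX^{!}=\{1\}\times\PP^{1}$ is the only component of $\tilX$ through $0^{!}$ and $\tilX^{!}\to X$ is totally ramified over $0$ with ramification group $W^{!}$, the two degrees coincide, so $\iota_{0}$ is the identity of $\xcoch(T)^{w}$. Equivalently — and this is the route I expect to carry out in detail — since both sides are free of rank $r$ it is enough to prove $\iota_{0}$ surjective.

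I expect this last surjectivity to be the main obstacle, because the branch points of $q_{a}$ other than $0$ also carry local lattices $\Lambda_{a,x}$ that are typically nonzero (e.g.\ of corank one at a simple reflection), and one must show these are redundant in the global lattice $P^{\flat}_{a}$. The resolution is the genus-$0$ geometry: on the union of $\PP^{1}$'s $\tilX$, a line bundle is rigid away from any single chosen point (Birkhoff decomposition), so propagating $W$-equivariantly and using \eqref{exactP} to see that $z$ contributes only the connected group $J_{a,z}=T$ and the remaining bad points only the connected $R_{a,x}$ — hence nothing to the lattice — all those contributions collapse into the image of the single ``large-inertia'' point $0$, the only place where the N\'eron model produces a genuine lattice rather than a connected correction term. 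Carrying Ng\^o's cameral-cover identifications and the gluing $\tilX\to X_{a}$ through all of this $W$-equivariance is where the real work lies.
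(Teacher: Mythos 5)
There is a genuine gap, and it sits exactly where your proposal hedges: the identification $P^{\flat}_{a}\cong\xcoch(T)^{w}$, and the subsequent claim that $\xcoch(T)^{w}\isom\xcoch(T)_{w}/\textup{torsion}$. The natural map $\xcoch(T)^{w}\hookrightarrow\xcoch(T)\twoheadrightarrow\xcoch(T)_{w}/\textup{torsion}$ is injective with \emph{finite} cokernel but is \emph{not} surjective in general (your own parenthetical ``an isomorphism modulo torsion'' concedes this), so it cannot be used to identify the two groups. For a concrete counterexample take $G=\mathrm{PGL}_{3}$ and $w=s_{1}$: then $\xcoch(T)^{w}=\ZZ(\alpha_{1}^{\vee}+2\alpha_{2}^{\vee})$, $\xcoch(T)_{w}\cong\ZZ\bar\alpha_{2}^{\vee}$ is torsion-free, and the natural map is multiplication by $2$. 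Were $P^{\flat}_{a}$ really $\xcoch(T)^{w}$, the lemma's assertion that $\iota_{0}$ is an isomorphism onto $\Lambda_{a,0}\cong\xcoch(T)_{w}/\textup{torsion}$ would be false, because $\Lambda_{a,0}$ only embeds into $\xcoch(T)^{w}$ (via the evaluation map on the uniformizer) with finite index. So your descent computation of $P^{\flat}_{a}$ is overshooting.

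The source of the overshoot is the step ``a $W$-invariant multidegree is determined by its $eW^{!}$-component, which must lie in $\xcoch(T)^{W^{!}}$.'' This computes $\pi_{0}(\Bun_{T}(\tilX))^{W}$, but what one needs is (roughly) $\pi_{0}$ of the stack of $J^{\flat}_{a}$-torsors, which is not the same thing: taking $W$-invariants and taking $\pi_{0}$ do not commute, and the correction is exactly a finite-index discrepancy governed by the group cohomology of $W^{!}$ acting on $T$-valued data. The paper avoids this by never trying to compute $P^{\flat}_{a}$ directly in terms of $\xcoch(T)^{w}$. Instead it introduces surjections $\alpha\colon\xcoch(T)_{w}\twoheadrightarrow\Lambda_{a,0}$ (local, via $J_{a}(F)/J^{\flat,0}_{a}(\calO_{F})\cong\xcoch(T)_{w}$) and $\beta\colon\xcoch(T)_{w}\twoheadrightarrow P^{\flat}_{a}$ (global, via $\pi_{0}(\Pic(X,J^{\flat,0}_{a}))\cong\xcoch(T)_{w}$), together with the injections $\ev\colon\Lambda_{a,0}\hookrightarrow\xcoch(T)^{w}$ and $\deg\colon P^{\flat}_{a}\to\xcoch(T)^{w}$, and deduces that $\iota_{0}$ is an isomorphism by a diagram chase; $\deg$ is never claimed to be surjective. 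Your computation of $\Lambda_{a,0}\cong\xcoch(T)_{w}/\textup{torsion}$ and your discreteness argument for $P^{\flat}_{a}$ (via genus $0$) are fine and essentially match the paper, but to close the argument you need the \emph{coinvariant} surjection onto $P^{\flat}_{a}$ from \cite[Proposition 6.4, Corollaire 6.7]{NgoHit}, not the invariant target $\xcoch(T)^{w}$.
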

\begin{proof}
By \cite[Corollaire 4.8.1]{NgoFL}, the finite type N\'eron model $J^{\flat}_{a}$ of $J_{a}$ is $(\Res_{\tilX^{!}/X}(T\times \tilX^{!}))^{w}$. The Lie algebra of $P^{\flat}_{a}$ is $\cohog{1}{X,\Lie J^{\flat}_{a}}=\cohog{1}{X,(\frt\otimes\calO_{\tilX^{!}})^{w}}\subset\cohog{1}{\tilX^{!},\calO_{\tilX^{!}}}=0$ since $\tilX^{!}\cong\PP^{1}$. Therefore $P^{\flat}_{a}$ is discrete.

The restriction of $J^{\flat}_{a}$ to $\Spec\calO_{F}$ is $(\Res_{\tilO^{!}_{F}/\calO_{F}}T)^{w}$. Therefore $\Lambda_{a,0}=J_{a}(F)/J^{\flat}_{a}(\calO_{F})=T(F^{!})^{w}/T(\tilO^{!}_{F})^{w}$. Taking the $w$-invariants of the exact sequence
\begin{equation*}
1\to T(\tilO^{!}_{F})\to T(F^{!})\xrightarrow{\ev}\xcoch(T)\to0
\end{equation*}
we get that $\Lambda_{a,0}$ is the image of the map
\begin{equation*}
\ev:J_{a}(F)=T(F^{!})^{w}\to\xcoch(T)^{w}.
\end{equation*}
Similarly, from the injection $J^{\flat}_{a}\hookrightarrow\Res_{\tilX^{!}/X}(T\times \tilX^{!})$ we deduce a natural map
\begin{equation*}
\deg:P^{\flat}_{a}\to\cohog{1}{\tilX^{!},T}^{w}=\xcoch(T)^{w}.
\end{equation*}

Consider the diagram 
\begin{equation}\label{ww}
\xymatrix{\xcoch(T)_{w}\ar@{=}[d]\ar[r]^{\alpha} & \Lambda_{a,0}\ar[r]^{\ev}\ar[d]^{\iota} & \xcoch(T)^{w}\ar@{=}[d]\\
\xcoch(T)_{w}\ar[r]^{\beta} & P^{\flat}_{a}\ar[r]^{\deg} & \xcoch(T)^{w}}
\end{equation}
Here $\alpha$ is a surjection induced from the map $J^{\flat,0}_{a}\to J^{\flat}_{a}$ (here $(-)^{0}$ denotes fiber-wise neutral component), and the fact that $J_{a}(F)/J^{\flat,0}_{a}(\calO_{F})\cong\xcoch(T)_{w}$ \cite[Lemme 3.9.4]{NgoFL}. The map $\beta$ is a surjection induced from the same map $J^{\flat,0}_{a}\to J^{\flat}_{a}$, and the fact that $\pi_{0}(\Pic(X,J_{a}^{\flat,0}))\cong\xcoch(T)_{w}$ \cite[Proposition 6.4, Corollaire 6.7]{NgoHit}. The diagram \eqref{ww} is easily seen to be commutative.

The fact that $\beta$ is surjective shows that $\iota_{0}$ is also surjective. The fact that $\ev$ is injective shows that $\iota_{0}$ is also injective. Therefore $\iota_{0}$ is an isomorphism. Now $\ker(\alpha)\cong J^{\flat}_{a}(\calO_{F})/J^{\flat,0}_{a}(\calO_{F})$ is torsion and $\Lambda_{a,0}$ is torsion-free since $\ev$ is injective, hence the first row of \eqref{ww} identifies $\Lambda_{a,0}$ with the torsion-free quotient of $\xcoch(T)_{w}$.
\end{proof}

\begin{lemma}\label{l:inj} The natural map
\begin{equation*}
P^{\ker}_{a}\to\prod_{x\in\Sing(a)\backslash\{0\}}\pi_{0}(P_{a,x})
\end{equation*}
is injective with finite cokernel.
\end{lemma}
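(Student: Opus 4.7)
The plan is to combine the snake-lemma exact sequence \eqref{Pker} with the isomorphism $\iota_{0}:\Lambda_{a,0}\isom P^{\flat}_{a}$ supplied by Lemma \ref{l:lattice}. All the ingredients are already in place; what remains is to unpack the natural map in these terms.

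First, I would show that the projection $\Lambda^{\ker}_{a}\to\prod_{x\in\Sing(a)\setminus\{0\}}\Lambda_{a,x}$ is an isomorphism. Since $\Lambda^{\ker}_{a}=\ker\gamma$ for $\gamma$ the third vertical arrow in \eqref{prodP}, and the restriction of $\gamma$ to the $\Lambda_{a,0}$-factor is the isomorphism $\iota_{0}$, any tuple $(\lambda_{x})_{x\neq 0}$ is uniquely completed to an element of $\ker\gamma$ by setting $\lambda_{0}:=-\iota_{0}^{-1}$ of the sum over $x\neq 0$ of the images of $\lambda_{x}$ in $P^{\flat}_{a}$. Second, I would observe that $\pi_{0}(P_{a,x})=\Lambda_{a,x}$ for each $x$, because the kernel $R_{a,x}$ in \eqref{newlocalP} is a connected affine commutative group scheme. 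An unwinding of \eqref{prodP} then identifies the natural map of the lemma with the composition
\begin{equation*}
P^{\ker}_{a}\twoheadrightarrow\Lambda^{\ker}_{a}\isom\prod_{x\in\Sing(a)\setminus\{0\}}\Lambda_{a,x}=\prod_{x\in\Sing(a)\setminus\{0\}}\pi_{0}(P_{a,x}),
\end{equation*}
the first arrow being the snake-lemma surjection of \eqref{Pker}.

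Given this factorization, the cokernel is trivial, hence finite, as required. The kernel is identified via \eqref{Pker} with $S\cap T^{w}$, which is finite by the choice of $S$ and the hypothesis on $\textup{char}(k)$. The step I expect to demand the most care is the full injectivity assertion, since a priori the kernel equals this finite group $S\cap T^{w}$; one must either refine the choice of $S$ (exploiting that $G$ is adjoint and that $w\in W^{!}$ comes from the cyclic tame Galois group of the local cameral cover) so that $S\cap T^{w}$ is trivial, or, failing that, settle for the weaker conclusion that the kernel is finite, which is all that is needed for the downstream applications in this section.
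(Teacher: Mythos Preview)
Your argument has a genuine gap: the claim that $R_{a,x}$ is a \emph{connected} affine commutative group scheme is false. The paper only asserts (in \S\ref{ss:localPic}) that $R_{a,x}$ is affine of finite type; its component group $\pi_{0}(R_{a,x})$ can be nontrivial. Indeed, the paper's own proof exploits precisely this: it shows that $\pi_{0}(T^{w})$ embeds into $\pi_{0}(R_{a,\infty})$, which would be impossible if $R_{a,\infty}$ were connected. Once the identification $\pi_{0}(P_{a,x})=\Lambda_{a,x}$ fails, so does your factorization of the natural map through the surjection $P^{\ker}_{a}\twoheadrightarrow\Lambda^{\ker}_{a}$: the restriction of the natural map to $S\cap T^{w}\subset P^{\ker}_{a}$ lands in $\prod_{x\neq0}\pi_{0}(R_{a,x})$, not in zero.

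The paper's proof handles exactly the difficulty you flag at the end, but not by refining $S$. Instead it sets up a map of short exact sequences
\begin{equation*}
\xymatrix{1\ar[r] & S\cap T^{w}\ar[d]^{\alpha}\ar[r] & P^{\ker}_{a}\ar[r]\ar[d]^{\beta} & \Lambda^{\ker}_{a}\ar[r]\ar[d]^{\gamma} & 1\\
1\ar[r] & \prod_{x\neq0}\pi_{0}(R_{a,x})\ar[r] & \prod_{x\neq0}\pi_{0}(P_{a,x})\ar[r] & \prod_{x\neq0}\Lambda_{a,x}\ar[r] & 1}
\end{equation*}
Your observation via Lemma~\ref{l:lattice} gives that $\gamma$ is an isomorphism. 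The new ingredient is injectivity of $\alpha$: using that $G$ is adjoint, \cite[Proposition~3.9.7]{NgoFL} identifies $\pi_{0}(P_{a,\infty})\cong\xcoch(T)_{w}$ and shows $\pi_{0}(T^{w})\hookrightarrow\pi_{0}(R_{a,\infty})$ as the torsion subgroup; since $S\cap T^{w}\hookrightarrow\pi_{0}(T^{w})$ by the choice of $S$, the map $\alpha$ is injective. Then $\beta$ is injective by the five lemma, and the snake lemma gives $\coker(\beta)\cong\coker(\alpha)$, which is finite (not trivial, contrary to your conclusion). So the missing idea is to use the special point $\infty\in\Sing(a)$, where the local picture is well understood via \cite{NgoFL}, to absorb $S\cap T^{w}$.
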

\begin{proof}
Consider the following map between short exact sequences
\begin{equation}\label{kerlocal}
\xymatrix{1\ar[r] & S\cap T^{w}\ar[d]^{\alpha}\ar[r] & P^{\ker}_{a}\ar[r]\ar[d]^{\beta} & \Lambda^{\ker}_{a}\ar[r]\ar[d]^{\gamma} & 1\\
1\ar[r] & \prod_{x\in\Sing(a)\backslash\{0\}}\pi_{0}(R_{a,x})\ar[r] & \prod_{x\in\Sing(a)\backslash\{0\}}\pi_{0}(P_{a,x})\ar[r] & \prod_{x\in\Sing(a)\backslash\{0\}}\Lambda_{a,x}\ar[r] & 1}
\end{equation}
By Lemma \ref{l:lattice}, $\Lambda_{a,0}\isom P^{\flat}_{a}$, hence $\gamma$ is an isomorphism. By \cite[Proposition 3.9.7]{NgoFL}, we have an isomorphism $\pi_{0}(P_{a,\infty})\cong\xcoch(T)_{w}$ because $G$ is of adjoint type. Also by \cite[Proposition 3.9.7]{NgoFL}, the map $\pi_{0}(T^{w})\to\xcoch(T)_{w}=\pi_{0}(P_{a,\infty})$ is an embedding whose image is the torsion part of $\xcoch(T)_{w}$. Since $T^{w}\to P_{a,\infty}$ factors through $R_{a,\infty}$, we see that $\pi_{0}(T^{w})\to\pi_{0}(R_{0,\infty})$ is injective. By the choice of $S$ (complementary to the neutral component of $T^{w}$), $S\cap T^{w}$ injects into $\pi_{0}(T^{w})$, therefore $S\cap T^{w}\hookrightarrow\pi_{0}(R_{a,\infty})$, proving that the map $\alpha$ in \eqref{kerlocal} is also injective. Therefore the map $\beta$ in \eqref{kerlocal} is also injective.

Applying the snake lemma to \eqref{kerlocal}, we get $\coker(\beta)\cong\coker(\alpha)$, which is finite.
\end{proof}


\begin{prop}\label{p:comppi}  
For any $x\in X(k)$, the following diagram is commutative:
\begin{equation}\label{comppi}
\xymatrix{\cent\ar[r]^{\sigma_{a,x}}\ar@/_2pc/[rr]^{\sigma_{a}} & \Ql[\pi_{0}(P_{a,x})]\ar[r] & \Ql[\pi_{0}(\calP_{a})]}
\end{equation}
Here $\sigma_{a,x}$ is the homomorphism \eqref{localcentpi} applied to $\gamma(a,x)$, and $\sigma_{a}$ is the stalk of \eqref{globalcentpi} at $a$.
\end{prop}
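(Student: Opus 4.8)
The plan is to rewrite both $\sigma_{a,x}$ and $\sigma_{a}$ as pushforwards of the \emph{same} section map from cocharacters into a Picard group, one local and one global, and then compare via the gluing morphism relating them. Recall from \S\ref{ss:definesigma} that, after restriction to $\cent$, the map $\sigma_{a,x}$ is induced on group algebras by the composite $\xcoch(T)\twoheadrightarrow\xcoch(T)_{W^{!}}=\pi_{0}(\calP_{a,x})\twoheadrightarrow\pi_{0}(P_{a,x})$ of \eqref{xcochgamma}, in which $\lambda$ is sent to the class of the $J_{a}$-torsor on $\Spec\calO_{x}$ produced from $\varpi^{\lambda}$ through a chosen component of the local cameral cover at $x$. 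On the global side, \S\ref{ss:defineglobalsigma} exhibits $\sigma$ as the degree-zero, $W$-coinvariant part of the pushforward along the section $s\colon\xcoch(T)\times\tcArs\to\Grass_{J}\to\calP$ of \eqref{section}; taking the stalk at $a$ (and using $\calP_{a}\cong[\hP_{a}/J_{a,z}]$ with $J_{a,z}\cong T$ connected, so $\pi_{0}(\calP_{a})=\pi_{0}(\hP_{a})$), this says $\sigma_{a}$ is induced by $\lambda\mapsto[s(\lambda,\tila)]$ for a point $\tila$ of the regular semisimple locus $X_{a}^{\rs}$ of the cameral curve, the $W$-coinvariance in $\xcoch(T)$ making the choice of $\tila$ immaterial once we restrict to $\cent$.

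The key geometric input is the Beauville--Laszlo description of $\Grass_{J}$: its fibre over $(a,x)$ is canonically the local Picard $\calP_{a,x}=\Grass_{J_{a,x}}$, and the morphism $\Grass_{J}\to\calP$ restricts over $(a,x)$ to the ``extend by the trivial $J_{a}$-torsor on $X\backslash\{x\}$'' gluing map $\calP_{a,x}\to\calP_{a}$. On component groups, this gluing map is, by construction together with diagram \eqref{PaxP}, precisely the composite $\pi_{0}(\calP_{a,x})\twoheadrightarrow\pi_{0}(P_{a,x})\to\pi_{0}(\hP_{a})=\pi_{0}(\calP_{a})$, i.e.\ the surjection of \eqref{xcochgamma} followed by the natural map appearing in \eqref{comppi}. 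Moreover, in the construction of the global data the local cameral cover at $x$ was identified with the completion of $X_{a}$ at $x$, compatibly with $W$-actions, so that a point $\tila\in X_{a}^{\rs}$ lying in the closure of a fixed branch above $x$ picks out exactly the component of the local cameral cover used to define $\sigma_{a,x}$; for $x=0$ this matching of ``lattice parts'' is made explicit by the identification $\Lambda_{a,0}\isom P^{\flat}_{a}$ of Lemma \ref{l:lattice}. Granting that these identifications are compatible with the section maps --- that the restriction of the global section $s(\lambda,-)$ to the fibre over $(a,x)$ is the local section of \S\ref{ss:definesigma} followed by $\calP_{a,x}\to\calP_{a}$ --- the commutativity of \eqref{comppi} is then read off as $\sigma_{a}(\lambda)=(\text{gluing})(\text{local class of }\lambda)=(\text{nat})\bigl(\sigma_{a,x}(\lambda)\bigr)$, extended linearly to $\cent$.

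The step I expect to be the main obstacle is the case $x\in\Sing(a)$ (including $x=0$), where there is no point of $\tcArs$ over $(a,x)$, so ``$s$ restricted to the fibre at $(a,x)$'' and the selection of a branch cannot be phrased naively. The remedy should be a specialization argument: the class $[s(\lambda,\tila)]\in\pi_{0}(\calP_{a})$ is locally constant as $\tila$ varies over $X_{a}^{\rs}$ --- which is exactly what is encoded by passing to degree-zero homology in \S\ref{ss:defineglobalsigma} --- so one may let $\tila$ degenerate to a point of the full cameral curve $X_{a}$ lying over $x$, and then use Ng\^o's computations of $\pi_{0}(\calP_{a,x})$ (\cite[\S3.9]{NgoFL}) together with \eqref{PaxP} to identify the limiting class with $(\text{nat})(\sigma_{a,x}(\lambda))$. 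Once this limiting identity is established, \eqref{comppi} follows by a diagram chase through \eqref{xcochgamma}, \eqref{PaxP} and the defining diagrams of $\sigma$ and of $\sigma_{a,x}$; the case $x=z$ is the trivial unramified one, since there $\calP_{a,z}=\Grass_{T}$, $P_{a,z}=\calP_{a,z}$, and $\sigma_{a,z}$ is just the inclusion $\cent\hookrightarrow\Ql[\xcoch(T)]$.
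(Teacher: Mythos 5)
Your proposal is essentially correct in its geometric picture but takes a more computational route than the paper. The paper introduces Ng\^o's canonical map $\tau:\xcoch(T)\to\pi_{0}(\calP_{a})$ (from \cite[Proposition 6.8]{NgoHit} and \cite[Proposition 4.10.3]{NgoFL}) as an intermediary and reduces the claim to the commutativity of two triangles: $\tau=\iota_{x}\circ\tilsigma_{a,x^{!}}$, which is cited directly to Ng\^o's compatibility lemma, and $\tau=\iota_{z}\circ s(-,z^{!})$, which follows by identifying $s(-,z^{!})$ with Ng\^o's local map $\tilsigma_{a,z^{!}}$ at the unramified point $z$. In effect the paper stays exclusively at the regular semisimple point $z$ on the global-section side, so it never needs to restrict $s$ over a singular $(a,x)$. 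You instead try to compare the global section with the local one directly over $(a,x)$, and correctly identify the main obstacle (no point of $\tcArs$ lies over a singular $(a,x)$) and propose a specialization argument. This is plausible, but carrying it out in full is tantamount to reproving the local--global compatibility in \cite[Proposition 4.10.3]{NgoFL} rather than just the local $\pi_{0}$ computation of \cite[\S3.9]{NgoFL} that you cite; the latter alone does not pin down the image in $\pi_{0}(\calP_{a})$ of the limiting class. So your outline is sound but your citation for the final identification should be Ng\^o's global-to-local compatibility statement, not merely his local component-group computation, and the paper's route via $\tau$ buys a cleaner argument by absorbing exactly this specialization step into the cited reference.
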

\begin{proof}
Recall the choice $t_{z}$ gives a point $z^{!}\in X^{!,\rs}_{a}$ over $z$. Also we have $x^{!}\in X^{!}_{a}$ over $x$. Consider the diagram
\begin{equation}\label{xz}
\xymatrix{& \pi_{0}(P_{a,x})\ar[dr]^{\iota_{x}} \\
\xcoch(T)\ar[dr]^{s(-,z^{!})}\ar[ur]^{\tilsigma_{a,x^{!}}}\ar[rr]^{\tau} & & \pi_{0}(\calP_{a})\\
& \pi_{0}(\Grass_{J_{a},z})=\pi_{0}(P_{a,z})\ar[ur]^{\iota_{z}}}
\end{equation}
Here $s(-,z^{!})$ is the map in \eqref{section}, and the composition $\iota_{z}\cdot s(-,z^{!})$ is used to define $\sigma_{a}$, see \S\ref{ss:defineglobalsigma}. The homomorphism $\tilsigma_{a,x^{!}}$ is defined in \eqref{xcochgamma} (quoted from \cite[Proposition 3.9.2]{NgoFL}), which was used to define $\sigma_{a,x}$. Therefore, in order to show that \eqref{comppi} is commutative, if suffices to show that the outer square of \eqref{xz} is commutative.

The arrow $\tau$ in \eqref{xz} is defined in \cite[Proposition 6.8]{NgoHit} and \cite[Proposition 4.10.3]{NgoFL}. We shall show that the two triangles in \eqref{xz} are both commutative. The commutativity of the upper triangle follows from the compatibility of Ng\^o's constructions, see the proof of \cite[Proposition 4.10.3]{NgoFL}. On the other hand, tracing through the definition of $s(-,z^{!})$ in \cite[Lemma 3.2.5]{GS}, we see that $s(-,z^{!})$ is the same as Ng\^o's map $\tilsigma_{a,z^{!}}$, therefore the lower triangle of \eqref{xz} is also commutative. This finishes the proof.
\end{proof}


\subsection{Product formula}\label{ss:prod}
For each $x\in\Sing(a)$, choosing a trivialization of $\calO_{X}(D)$ at $y$, we may view the restriction of $a$ on $\Spec\calO_{x}$ as an element $a_{x}\in\frc(\calO_{x})$. Let $\gamma(a,x)=\epsilon(a_{x})\in\frg(\calO_{x})$ be the Kostant section. Define
\begin{equation*}
\Spr_{\bP,a,x}:=\Spr_{\bP,\gamma(a,x)}.
\end{equation*}
Recall from \cite[\S4.2.4]{NgoFL} that we have a global Kostant section $\epsilon(a)=(\calE,\varphi)\in\MHit_{a}$. Picking any isomorphism $\iota_{z}:(\calE,\varphi)_{z}\isom(G,t_{z})$ gives a point $\hep(\hata)\in\hMHit_{\hata}$. As in \cite[\S4.15]{NgoFL}, by gluing the local Hitchin pairs at $x\in\Sing(a)$ with $\hep(\hata)$ we get a morphism
\begin{equation}\label{preprod}
\Spr_{a,0}\times\prod_{x\in\Sing(a)\backslash\{0\}}\Spr_{\bG,a,x}\to\hMpar_{\hata,0}
\end{equation}
which intertwines the $\prod_{x\in\Sing(a)}P_{a,x}$-action on the LHS and the $\hP_{a}$-action on the right.

A rigidified version of the product formula (\cite[Proposition 4.15.1]{NgoFL}, \cite[Proposition 2.4.1]{GS}) gives a homeomorphism
\begin{equation}\label{prod}
(\Spr_{a,0}\times\prod_{x\in\Sing(a)\backslash\{0\}}\Spr_{\bG,a,x})\twtimes{\prod_{x\in\Sing(a)}P_{a,x}}\hP_a\to\hMpar_{\hata,0}.
\end{equation}
Dividing both sides of \eqref{prod} by $S$, using \eqref{addS}, we get a homeomorphism of stacks
\begin{equation}\label{modS}
\Spr_{a,0}\twtimes{P_{a}^{\ker}}\prod_{x\in\Sing(a)\backslash\{0\}}\Spr_{\bG,a,x}\to[\hMpar_{\hata,0}/S].
\end{equation}

\subsection{Pulling apart components}\footnote{This part of the argument was suggested by Y.Varshavsky.} By \cite[\S3.10.2]{NgoFL}, the irreducible components of $\Spr_{\bG,a,x}$ are in bijections with $\pi_{0}(P_{a,x})$. Hence, by Lemma \ref{l:inj}, $P_{a}^{\ker}$ hence permutes $\prod_{x\in\Sing(a)\backslash\{0\}}\Irr(\Spr_{\bG,a,x})$ freely with finitely many orbits. Let $Y\subset\prod_{x\in\Sing(a)\backslash\{0\}}\Spr_{\bG,a,x}$ be a union of irreducible components, representing the orbits of the $P_{a}^{\ker}$-action. Let $Y^{\reg}\subset Y$ be the intersection of $Y$ with the regular locus $\prod\Spr^{\reg}_{\bG,a,x}$ (cf. \cite[Lemma 3.3.1]{NgoFL}). Then we have a commutative diagram
\begin{equation*}
\xymatrix{\Spr_{a,0}\times Y^{\reg}\ar@{^{(}->}[r]^{j_{Y}}\ar[d]^{\wr} & \Spr_{a,0}\times Y\ar[d]^{\nu}\\
\Spr_{a,0}\twtimes{P_{a}^{\ker}}\prod_{x\in\Sing(a)\backslash\{0\}}\Spr^{\reg}_{\bG,a,x}\ar@{^{(}->}[r]^{j} & \Spr_{a,0}\twtimes{P_{a}^{\ker}}\prod_{x\in\Sing(a)\backslash\{0\}}\Spr_{\bG,a,x}\ar[r] & [\hMpar_{\hata,0}/S]}
\end{equation*} 
The map $j_{Y,!}$ on compactly supported cohomology can be factored as\begin{equation}\label{apart}
j_{Y,!}:\cohoc{*}{\Spr_{a,0}\times Y^{\reg}}\xrightarrow{j_{!}}\cohoc{*}{[\hMpar_{\hata,0}/S]}\xrightarrow{\nu^{*}}\cohoc{*}{\Spr_{a,0}\times Y}.
\end{equation}
Here $\nu^{*}$ makes sense because it is proper. Let $d=\dim Y$. Using the K\"unneth formula, and taking only the top cohomology of $Y^{\reg}$ and $Y$, we get
\begin{equation*}
\xymatrix{\cohoc{i}{\Spr_{a,0}}\otimes\cohoc{2d}{Y^{\reg}}\ar[r] & \cohoc{i+2d}{[\hMpar_{\hata,0}/S]}\ar[r] & \cohoc{i}{\Spr_{a,0}}\otimes\cohog{2d}{Y}}
\end{equation*}
The composition of the above maps is an isomorphism because $\cohoc{2d}{Y^{\reg}}\isom\cohog{2d}{Y}=\Ql[\Irr(Y)]$. Therefore the map $j_{!}$ in \eqref{apart} gives an injection
\begin{equation}\label{embed}
\cohoc{i}{\Spr_{a,0}}\hookrightarrow\cohoc{i}{\Spr_{a,0}}\otimes\cohoc{2d}{Y^{\reg}}\hookrightarrow\cohoc{i+2d}{[\hMpar_{\hata,0}/S]}.
\end{equation}
where the first map is given by the inclusion of the fundamental class $[Y^{\reg}]\in\cohoc{2d}{Y^{\reg}}$.

\begin{prop}\label{p:comparison}
The map \eqref{embed} intertwines the $\tilW\times\pi_0(P_{a,0})$-action on the first factor of the LHS and the $\tilW\times\pi_0(\calP_a)=\tilW\times\pi_{0}(\hP_{a}/S)$-action on the RHS via the natural map $P_{a,0}\to\hP_{a}$.
\end{prop}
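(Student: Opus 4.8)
The plan is to verify the two equivariances---under $\pi_{0}(P_{a,0})$ and under $\tilW$---separately, in each case by unwinding the construction of \eqref{embed} from the product formula \eqref{modS} and keeping track of which factor each symmetry acts on.

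For the $\pi_{0}$-part I would use that the morphism \eqref{preprod}, hence \eqref{prod} and \eqref{modS}, is equivariant for the local Picard groups: the $\prod_{x\in\Sing(a)}P_{a,x}$-action on the source corresponds, via \eqref{addS}, to the $\hP_{a}$-action (resp.\ the $\calP_{a}=\hP_{a}/S$-action) on $\hMpar_{\hata,0}$ (resp.\ on $[\hMpar_{\hata,0}/S]$); this is the content of the product formula \cite[\S4.15]{NgoFL}, \cite[\S2.4]{GS}. In particular the factor $P_{a,0}$ acts on $\Spr_{a,0}$ on the left and through $P_{a,0}\to\hP_{a}\to\calP_{a}$ on the right. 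All the remaining maps entering \eqref{embed}---the open inclusion $j_{Y}$, the map $j$, the finite map $\nu$, and tensoring with the fundamental class $[Y^{\reg}]$---involve only the factors $\prod_{x\neq0}\Spr_{\bG,a,x}$ and $Y$, on which $P_{a,0}$ acts trivially, so they are $P_{a,0}$-equivariant; passing to $\pi_{0}$ yields the claim, compatibly with $\pi_{0}(P_{a,0})\to\pi_{0}(\calP_{a})$.

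For the $\tilW$-part---the substantive one---I would first establish a local product formula at $x=0$ for the data defining the action. Recall the $\tilW$-action on $\cohoc{*}{[\hMpar_{\hata,0}/S]}$ is assembled (\S\ref{ss:action}) from the $W_{\bP}$-actions attached to the Cartesian squares \eqref{MCart} and classical Springer theory for $\unl_{\bP}$, together with the $\Omega_{\bI}$-action by right translation on the Borel reduction at the varying point; restricting to the fibre over $(\hata,0)$ fixes that point at $0$. I would show that, over this fibre and through \eqref{modS}: the evaluation $\ev^{\bP}_{\bI}$ of \eqref{MCart} factors as the projection onto the ``$\Spr_{a,0}$-factor'' followed by the local evaluation $\ev_{\gamma(a,0)}$ of the square \eqref{SprCart} for $\gamma(a,0)$, with the inner twist $\unL_{\bP}$ at $0$ matched to $L_{\bP}$; that the parahoric product formulas for the $\calM_{\bP}$ are compatible with the forgetful maps $\widehat{\forg}^{\bP}_{\bI}$ on one side and $\nu_{\bP}\colon\Spr_{a,0}\to\Spr_{\bP,a,0}$ on the other (the $\bG$-level factors at $x\neq0$ being untouched); and that $\Omega_{\bI}$ acts at $x=0$ by the \S\ref{ss:O}-action on $\Spr_{a,0}$ (left and right multiplication commuting, so this descends past the $P^{\ker}_{a}$-contraction). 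It follows that each $\tilw\in\tilW$ is realised on this fibre by a cohomological correspondence which, under the isomorphism $\Spr_{a,0}\times Y^{\reg}\isom\Spr_{a,0}\twtimes{P^{\ker}_{a}}\prod_{x\neq0}\Spr^{\reg}_{\bG,a,x}$ and its global counterpart \eqref{modS}, is the exterior product of the local correspondence defining Lusztig's action on $\Spr_{a,0}$ (\S\ref{ss:L}, \S\ref{ss:O}) with the identity correspondence on $\prod_{x\neq0}\Spr_{\bG,a,x}$, contracted by $P^{\ker}_{a}$.

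Granting this, the $\tilW$-equivariance of \eqref{embed} is formal: \eqref{embed} is the composite of tensoring with $[Y^{\reg}]$, which is $\tilW$-equivariant since $\tilW$ acts trivially on $Y^{\reg}$, followed by $j_{!}$; and $j_{!}$ intertwines the two $\tilW$-actions because, by the product description above, the correspondences on source and target are pulled back from one another along the open inclusion $j$, so the standard compatibility of proper push-forward of cohomology with pullback of cohomological correspondences (Appendix \ref{ss:corr}, as used throughout \cite{GS}) applies. Together with the first part this proves the proposition. The step I expect to be the main obstacle is precisely the local product formula for the $\tilW$-action: identifying, at the fixed point $x=0$ and through \eqref{modS}, the restrictions of the global squares \eqref{MCart} and of the Hecke correspondences $\hH_{\tilw}$ with the local data \eqref{SprCart}, \eqref{Ro} for $\gamma(a,0)$, while honestly tracking the inner twist at $0$ and the $P^{\ker}_{a}$- and $S$-quotients; once this is in place, everything else is routine.
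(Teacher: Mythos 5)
Your proof follows essentially the same route as the paper. The paper's argument for the $\tilW$-equivariance is exactly what you describe: for each standard parahoric $\bP$ it exhibits a commutative diagram (the paper's \eqref{Cart}) whose outer square is $Y^{\reg}$ times the local diagram \eqref{SprCart}, whose right square is the restriction of the global diagram \eqref{MCart} to the fibre over $(\hata,0)$, and whose left square encodes the compatibility of $\widehat{\forg}^{\bP}_{\bI}$ with $\nu_{\bP}\times\id$ through the product formula --- this is precisely your ``local product formula at $x=0$.'' Since both $W_{\bP}$-actions are then pulled back from the same Springer action on $\pi_{\bP,*}\Ql$, the map $j_{!}$ (hence \eqref{embed}, which additionally tensors with $[Y^{\reg}]$ on a factor with trivial $\tilW$-action) is $W_{\bP}$-equivariant for every $\bP$; the $\Omega_{\bI}$-equivariance is handled exactly as you say, via a diagram relating \eqref{Ro} to its global analogue. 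The $\pi_{0}$-equivariance is likewise read off from the equivariance of \eqref{preprod}. The one presentational difference is that you phrase the final step in the language of pullbacks of cohomological correspondences and the compatibility of $j_{!}$ with them, whereas the paper's mechanism is more elementary: the sheaf $\widehat{\forg}^{\bP}_{\bI,*}\Ql$ on $\hM_{\bP,\hata,0}$ restricts along the open embedding to $\nu_{\bP,*}\Ql\boxtimes\Ql_{Y^{\reg}}$ compatibly with the $W_{\bP}$-structures, and $j_{!}$ on compactly supported cohomology of an open subset is automatically equivariant for a sheaf map. Both formulations are fine; the paper's is the more economical one for an open immersion.
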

\begin{proof}
The fact that this map intertwines the $\pi_0(P_{a,0})$-action on the LHS and the $\pi_0(\calP_a)$-action on the RHS is clear from the equivariance property of \eqref{preprod}.

It remains to prove the $\tilW$-equivariance of \eqref{embed}. For each standard parahoric $\bP$, we have a commutative diagram
\begin{equation}\label{Cart}
\xymatrix{\SSpr_{a,0}\times Y^{\reg}\ar@{^{(}->}[r]\ar@<-3ex>[d]^{\nu_{\bP}}\ar@<3.5ex>@{=}[d] & \hMpar_{\hata,0}\ar[d]^{\widehat{\forg}^{\bP}_{\bI}}\ar[r]^{\ev_{0}} & [\till_{\bP}/L_{\bP}]\ar[d]^{\pi_{\bP}}\\
\SSpr_{\bP,a,0}\times Y^{\reg}\ar@{^{(}->}[r] & \hM_{\bP,\hata,0}\ar[r]^{\ev_{0}} & [\frl_{\bP}/L_{\bP}]}
\end{equation}
such that the outer square is the product of $Y^{\reg}$ with the diagram \eqref{SprCart} and the right square is the restriction of the diagram \eqref{MCart} at $(\hata,0)\in\calB\times\Xz$. The $W_{\bP}$-action on $\cohoc{*}{\Spr_{a,0}\times Y^{\reg}}$ and $\cohoc{*}{\hMpar_{\hata,0}}$ are constructed from these diagrams using the parahorics $\bP$ and the classical Springer theory for $L_{\bP}$. The diagram \eqref{Cart} implies that \eqref{embed} is $W_{\bP}$-equivariant for every $\bP$, hence $\Wa$-equivariant. Similarly, using a diagram connecting \eqref{Ro} and its global analogue, one shows that \eqref{embed} is also $\Omega_{\bI}$-equivariant. Putting together, we conclude that \eqref{embed} is $\tilW$-equivariant.
\end{proof}

\subsection{Conclusion of the proof of Theorem \ref{th:mainloc}} Consider the diagram
\begin{equation*}
\xymatrix{\cent\otimes\cohoc{i}{\Spr_{a,0}}\ar[r]^{\sigma_{a,0}\otimes\id}\ar@/^2pc/[rr]_{\alpha_{\loc}}\ar@<6ex>[d]^{\eqref{embed}}\ar@{=}@<-6ex>[d] & \Ql[\pi_{0}(P_{a,0})]\otimes\cohoc{i}{\Spr_{a,0}}\ar[r]^(.6){\act_{\loc}}\ar@<6ex>[d]^{\eqref{embed}}\ar@<-5ex>[d]^{\iota_{0}} & \cohoc{i}{\Spr_{a,0}}\ar[d]^{\eqref{embed}}\\
\cent\otimes\cohoc{i+2d}{[\hMpar_{\hata,0}/S]}\ar@/_{2pc}/[rr]^{\alpha_{\glob}}\ar[r]^{\sigma_{a}\otimes\id} & \Ql[\pi_{0}(\calP_{a})]\otimes\cohoc{*}{[\hMpar_{\hata,0}/S]}\ar[r]^(.6){\act_{\glob}} & \cohoc{*}{[\hMpar_{\hata,0}/S]}}
\end{equation*}
where the upper $\alpha_{\loc}$ and $\alpha_{\glob}$ are actions maps of $\cent$; $\act_{\loc}$ and $\act_{\glob}$ are action maps of the $\pi_{0}$'s. By Proposition \ref{p:comppi}, the left side square is commutative. By Proposition \ref{p:comparison}, the right side square and the outer square are commutative. By Corollary \ref{c:mainstalk}, the lower triangle is commutative. Our goal is to prove that the upper triangle is commutative. From the known commutativity, we conclude that $\act_{\loc}\circ(\sigma_{a,0}\otimes\id)$ and $\alpha_{\loc}$ are the same if we further compose them with \eqref{embed}. Since \eqref{embed} is injective, they must be equal before composition, i.e., the upper triangle is commutative. This proves Theorem \ref{th:mainloc} for $\Spr_{a,0}$. Since $a_{0}\equiv a(\gamma)\in\varpi^{N}$ by construction, the theorem also holds for $\Spr_{\gamma}$ by Proposition \ref{p:constancy}. This finishes the proof of Theorem \ref{th:mainloc}.

 
\subsection{Duality between homology and compactly supported cohomology}\label{ss:duality}
The next goal is to prove Theorem \ref{th:mainhomo}. We consider the following general setting. Let $\Lambda$ be a group. Let $X$ be a scheme, locally of finite type over $k$ with a free $\Lambda$-action such that $X/\Lambda$ is representable by a proper scheme. The main examples we have in mind are $X=\Spr_{\gamma}$ with the action of a lattice $\Lambda\subset G_{\gamma}(F)$.

A {\em $\Lambda$-covering} of $X$ is a morphism
\begin{equation*}
f:Y\times\Lambda\to X
\end{equation*}
such that the induced morphism $\bar{f}:Y\to X/\Lambda$ is finite and surjective. For example, we may take $Y\subset X$ to be the union of representatives of the $\Lambda$-orbits on the irreducible components of $X$, and take $f$ to be the action map.

Given a $\Lambda$-covering, we can form a simplicial resolution $X_{*}$ of $X$
\begin{equation*}
\xymatrix{\cdots & X_{2}\ar@<1ex>[r]\ar[r]\ar@<-1ex>[r] & X_{1}\ar@<.7ex>[r]^{\delta_{0}}\ar@<-.7ex>[r]_{\delta_{1}} & X_{0}\ar[r] & X}
\end{equation*}
where $X_{0}=Y\times\Lambda$ and $X_{n}=X_{0}\times_{X}\times\cdots\times_{X}X_{0}$ ($n+1$ terms). Each $X_{n}$ carries a diagonal $\Lambda$-action. It is easy to see that each $X_{n}$ is $\Lambda$-equivariantly isomorphic to $Y_{n}\times\Lambda$, where $Y_{n}\to X/\Lambda$ is again finite surjective. The morphisms $\delta_{i}:X_{n}\to X_{n-1}$ are all finite.

Using cohomological descent for proper surjective morphisms (see \cite[\S5.3]{HodgeIII}), we see that $\cohoc{*}{X}$ can be calculated as the cohomology of the total complex associated to the following double complex
\begin{equation*}
C^{**}_{c}(f):\cohoc{*}{X_{0}}\xrightarrow{\delta_{0}^{*}-\delta_{1}^{*}}\cohoc{*}{X_{1}}\xrightarrow{\delta_{0}^{*}-\delta_{1}^{*}+\delta_{2}^{*}}\cohoc{*}{X_{2}}\to\cdots
\end{equation*} 
where each term is a free $\Ql[\Lambda]$-module $\cohoc{*}{Y_{n}}\otimes\Ql[\Lambda]$. Dually, $\homog{*}{X}$ can be calculated by the homology of
\begin{equation*}
C_{**}(f):\cdots\to\homog{*}{X_{2}}\xrightarrow{\delta_{0*}-\delta_{1*}+\delta_{2*}}\homog{*}{X_{1}}\xrightarrow{\delta_{0*}-\delta_{1*}}\homog{*}{X_{0}}.
\end{equation*}
where each term is again a free $\Ql[\Lambda]$-module $\homog{*}{Y_{n}}\otimes\Ql[\Lambda]$. In this way, we have upgraded the complexes $\cohoc{*}{X}$ and $\homog{*}{X}$ to objects in the derived category $D^{b}(\Ql[\Lambda]\textup{-mod})$.

Since each $Y_{n}$ is proper, we have
\begin{equation*}
\homog{*}{X_{n}}\cong\homog{*}{Y_{n}}\otimes\Ql[\Lambda]=\Hom_{\Lambda}(\cohoc{*}{Y_{n}}\otimes\Ql[\Lambda],\Ql[\Lambda])\cong\Hom_{\Lambda}(\cohoc{*}{X_{n}},\Ql[\Lambda]).
\end{equation*}
compatible with the differentials in $C^{**}_{c}(f)$ and $C_{**}(f)$, therefore the double complexes $C^{**}_{c}(f)$ and $C_{**}(f)$ are dual as free $\Ql[\Lambda]$-module. This gives an isomorphism in $D^{b}(\Ql[\Lambda]\textup{-mod})$:
\begin{equation}\label{duality}
\homog{*}{X}\cong\bR\Hom_{\Lambda}(\cohoc{*}{X},\Ql[\Lambda]).
\end{equation}

Moreover, neither the isomorphism classes of the objects $\cohoc{*}{X},\homog{*}{X}\in D^{b}(\Ql[\Lambda]\textup{-mod})$ nor the isomorphism \eqref{duality} depend on the choice of the $\Lambda$-covering. In fact, for any two $\Lambda$-coverings $f:Y\times\Lambda\to X$ and $f':Y'\times\Lambda\to X$ are both dominated by the third $f'':Y''\times\Lambda\to X$, where $Y''=(Y\times\{0\})\times_{X}(Y'\times\Lambda)$. Let $g:Y''\to Y$ and $g':Y''\to Y'$ be projections. A further application of cohomological descent gives canonical quasi-isomorphisms
\begin{eqnarray*}
C^{**}_{c}(f)\xrightarrow{g^{*}} C^{**}_{c}(f'')\xleftarrow{g'^{*}} C^{**}_{c}(f'),\\
C_{**}(f)\xleftarrow{g_{*}} C_{**}(f'')\xrightarrow{g'_{*}} C_{**}(f')
\end{eqnarray*}
which intertwine the dualities between $C_{**}(-)$ and $C^{**}_{c}(-)$. Therefore, $f,f'$ and $f''$ all give the same isomorphism \eqref{duality}.

\subsection{Proof of Theorem \ref{th:mainhomo}}\footnote{The idea of proving Theorem \ref{th:mainhomo} by duality of the type \eqref{duality} was suggested by R.Bezrukavnikov.}
Now let $X=\Spr_{\gamma}$ and $\Lambda\subset G_{\gamma}(F)$ be a free abelian subgroup considered in \cite[Proposition 2.1]{KL}, which acts freely on $\Spr_{\gamma}$ with proper quotient \cite[Proposition 3.1(b), Corollary 3.1]{KL}. In fact, the proof in \cite{KL} also applies to $\Spr_{\bP,\gamma}$ for any parahoric $\bP\subset G(F)$. Hence $\Lambda$ also acts freely on $\Spr_{\bP,\gamma}$ with proper quotient.

The discussion in \S\ref{ss:duality} gives the upgraded objects
\begin{equation}\label{up}
\cohoc{*}{\Spr_{\gamma}}, \homog{*}{\Spr_{\gamma}}\in D^{b}(\Ql[\Lambda]\textup{-mod})
\end{equation}
and the canonical isomorphism \eqref{duality} now reads
\begin{equation}\label{dualitySpr}
\homog{*}{\Spr_{\gamma}}\cong\bR\Hom_{\Lambda}(\cohoc{*}{\Spr_{\gamma}},\Ql[\Lambda]).
\end{equation}

\begin{lemma}
The upgraded objects in \eqref{up} carry $\tilW\times\pi_{0}(P_{a(\gamma)})$-actions (lifting the actions on the plain vector spaces), and the isomorphism \eqref{dualitySpr} is $\tilW\times\pi_{0}(P_{a(\gamma)})$-equivariant. 
\end{lemma}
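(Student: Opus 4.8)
The plan is to run the simplicial–resolution construction of \S\ref{ss:duality} $\Lambda$-equivariantly while keeping track of the two extra symmetries, using throughout that the resulting objects of $D^{b}(\Ql[\Lambda]\textup{-mod})$ and the isomorphism \eqref{duality} are independent of the chosen $\Lambda$-covering.

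The starting observation is that every piece of geometric input defining the $\tilW$- and $\pi_{0}(P_{a(\gamma)})$-actions is compatible with the $\Lambda$-action on $\Spr_{\gamma}$. Since $\gamma$ is regular semisimple, $G_{\gamma}$ is an $F$-torus, so $G_{\gamma}(F)$ and all of its quotients $\calP_{a(\gamma)}$, $P_{a(\gamma)}$ are commutative group ind-schemes; in particular the lattice $\Lambda\subset G_{\gamma}(F)$ is central, so the $\pi_{0}(P_{a(\gamma)})$-symmetry (induced by the $G_{\gamma}(F)$-action by left translation) commutes with the $\Lambda$-action. The $\Omega_{\bI}$-action is by right translations $R_{\omega}$, which also commute with left translation by $\Lambda$. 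Finally, for $\lambda\in\Lambda$ one has $\Ad((\lambda g)^{-1})\gamma=\Ad(g^{-1})\gamma$, so the evaluation maps $\ev_{\gamma}$, $\ev_{\bP,\gamma}$ of \eqref{SprCart} are $\Lambda$-invariant; thus \eqref{SprCart} and \eqref{Ro} are diagrams of $\Lambda$-spaces in which $\Lambda$ acts trivially on $[\till_{\bP}/L_{\bP}]$ and $[\frl_{\bP}/L_{\bP}]$, and the $W_{\bP}$-action on $\nu_{\bP*}\DD_{\Spr_{\gamma}}$ (resp.\ on $\nu_{\bP*}\const{\Spr_{\gamma}}$), being pulled back along $\ev_{\bP,\gamma}$ from the classical Springer sheaf on $[\frl_{\bP}/L_{\bP}]$, is automatically $\Lambda$-equivariant on $\Spr_{\bP,\gamma}$ (recall $\Lambda$ also acts freely on $\Spr_{\bP,\gamma}$ with proper quotient, so \S\ref{ss:duality} applies there).

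Given this, the lift of the automorphism-type generators is immediate: for $\omega\in\Omega_{\bI}$, and for $g\in P_{a(\gamma)}(k)$ representing a class in $\pi_{0}(P_{a(\gamma)})$, the corresponding automorphism $h$ of $\Spr_{\gamma}$ commutes with $\Lambda$, so pulling back a $\Lambda$-covering along $h$ and inserting the canonical comparison quasi-isomorphisms of \S\ref{ss:duality} yields an automorphism of the upgraded $\cohoc{*}{\Spr_{\gamma}}$ and $\homog{*}{\Spr_{\gamma}}$ lifting the known one; independence of the covering makes it well defined, and a standard homotopy argument (a connected family of such automorphisms, e.g.\ one lying in a connected subscheme of $P_{a(\gamma)}$, induces the identity on the object) shows it depends only on the class in $\pi_{0}(P_{a(\gamma)})$ and is multiplicative. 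For the $\Wa$-part one instead applies \S\ref{ss:duality} to $Z=\Spr_{\bP,\gamma}$ with the $\Lambda$-equivariant coefficient complex $\calF_{\bP}=\nu_{\bP*}\DD_{\Spr_{\gamma}}$ (resp.\ $\nu_{\bP*}\const{\Spr_{\gamma}}$): the resulting double complex computes $\homog{*}{\Spr_{\gamma}}=\bR\Gamma_{c}(\Spr_{\bP,\gamma},\calF_{\bP})$ (resp.\ $\cohoc{*}{\Spr_{\gamma}}$), has free $\Ql[\Lambda]$-modules in each degree, and carries a $W_{\bP}$-action induced from the $\Lambda$-equivariant $W_{\bP}$-action on $\calF_{\bP}$. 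Lusztig's compatibility between parahorics $\bP\subset\bQ$ persists because it concerns only the classical Springer sheaves pulled back along the $\ev$'s, so these operators assemble, together with the $\Omega_{\bI}$-operators (using the $\Lambda$-equivariant form of \eqref{Ro} for the cross relations), into a $\tilW$-action on the upgraded objects.

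Two points remain. First, that the $\Wa$-construction over $\Spr_{\bP,\gamma}$ produces the same object of $D^{b}(\Ql[\Lambda]\textup{-mod})$ as the direct construction of \S\ref{ss:duality} over $\Spr_{\gamma}$: this is checked by choosing compatible $\Lambda$-coverings of $\Spr_{\gamma}$ and $\Spr_{\bP,\gamma}$ (matched through fiber products along the proper $\Lambda$-equivariant map $\nu_{\bP}$) and using $\nu_{\bP*}=\nu_{\bP!}$ together with the independence-of-covering statement. I expect this bookkeeping — making it uniform in $\bP$ so that the Lusztig relations genuinely transport to the level of $D^{b}(\Ql[\Lambda]\textup{-mod})$ — to be the main obstacle. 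Second, the $\tilW\times\pi_{0}(P_{a(\gamma)})$-equivariance of \eqref{dualitySpr}: it suffices to check it termwise on the double complexes, where on each proper $X_{n}$ the operator induced on $\homog{*}{X_{n}}\cong\Hom_{\Lambda}(\cohoc{*}{X_{n}},\Ql[\Lambda])$ by a generator is the transpose of the one on $\cohoc{*}{X_{n}}$; for the $\Omega_{\bI}$- and $\pi_{0}$-generators this is the usual compatibility of $h_{*}$ with $h^{*}$, and for the $W_{\bP}$-generators it follows from the self-Verdier-duality up to shift and twist of the classical Springer sheaf, under which the $W_{\bP}$-actions on $\nu_{\bP*}\DD_{\Spr_{\gamma}}$ and $\nu_{\bP*}\const{\Spr_{\gamma}}$ correspond. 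Passing to total complexes then gives the asserted equivariance.
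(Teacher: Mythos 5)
Your proposal is correct and takes essentially the same approach as the paper: pick a $\Lambda$-covering of $\Spr_{\bP,\gamma}$, transport it along the Cartesian square \eqref{SprCart} (you push the Springer complex down to $\Spr_{\bP,\gamma}$; the paper pulls the covering up to $\Spr_{\gamma}$, which by proper base change yields the same double complex), build the $W_{\bP}$-action on $C^{**}_{c}$ and $C_{**}$ there, and use independence of the covering together with a $\Lambda$-equivariant version of \eqref{Ro} to assemble the $\tilW$-action and check it commutes with the $\pi_{0}(P_{a(\gamma)})$-action and with the duality \eqref{dualitySpr}. The point you flag as the ``main obstacle'' (uniformity in $\bP$) is indeed handled exactly as you suggest, by the independence-of-covering statement at the end of \S\ref{ss:duality}.
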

\begin{proof}
For each parahoric $\bP$, we pick a $\Lambda_{\bP}$-covering $f_{\bP}:Y_{\bP}\times\Lambda\to\Spr_{\gamma}$ and define a $\Lambda$-covering $f:Y\times\Lambda\to\Spr_{\gamma}$ by requiring the left square of the following diagram to be Cartesian
\begin{equation}\label{Ycover}
\xymatrix{Y\times\Lambda\ar[r]^{f}\ar[d] & \Spr_{\gamma}\ar[r]\ar[d] & [\till_{\bP}/L_{\bP}]\ar[d]\\
Y_{\bP}\times\Lambda\ar[r]^{f_{\bP}} & \Spr_{\bP,\gamma}\ar[r] & [\frl_{\bP}/L_{\bP}]}
\end{equation}
where the right square is topologically Cartesian by \eqref{SprCart}. The construction of the $W_{\bP}$-action on $\cohoc{*}{\Spr_{\gamma}}$ (resp. $\homog{*}{\Spr_{\gamma}}$) in \S\ref{ss:L} then gives a $W_{\bP}$-action on $C^{**}_{c}(f)$ (resp. $C_{**}(f)$), which is intertwined by the duality between $C^{**}_{c}(f)$ and $C_{**}(f)$ as $\Ql[\Lambda]$-modules. This gives the $W_{\bP}$-action on the upgraded objects \eqref{up}, and proves that \eqref{dualitySpr} is $W_{\bP}$-equivariant. The actions of $\Omega_{\bI}$ and $\pi_{0}(P_{a(\gamma)})$ are given by the actions of $\Omega_{\bI}$ and $P_{a(\gamma)}$ on $\Spr_{\gamma}$ itself, which clearly lift to the objects in \eqref{up} and are intertwined by \eqref{dualitySpr}. One easily checks that the $\pi_{0}(P_{a(\gamma)})$-action commutes with the $W_{\bP}$ and $\Omega_{\bI}$ actions. Using a variant of the diagram \eqref{Ro} incorporating the $\Lambda$-coverings as in \eqref{Ycover}, one checks that the commutation relation between $W_{\bP}$ and $\Omega_{\bI}$ continues to hold after upgrading. This finishes the proof.
\end{proof}

By \eqref{dualitySpr} and the above lemma, we have a $\tilW\times\pi_{0}(P_{a(\gamma)})$-equivariant spectral sequence
\begin{equation*}
E^{-p,-q}_{2}=\Ext^{-p}_{\Lambda}(\cohoc{q}{\Spr_{\gamma}},\Ql[\Lambda])\Rightarrow\homog{p+q}{\Spr_{\gamma}},
\end{equation*}
which necessarily converges because $\Ql[\Lambda]$ has cohomological dimension $\rk(\Lambda)$. Therefore, this gives a finite decreasing filtration $\Fil^{p}$ on $\homo{i}{\Spr_{\gamma}}$ such that
\begin{equation*}
\Gr_{\Fil}^{p}\homog{i}{\Spr_{\gamma}}=E^{p,-i-p}_{\infty}.
\end{equation*}
Since the $\cent$-action on the $E_{2}$ page factors through $\pi_{0}(G_{\gamma}(F))$ by Theorem \ref{th:mainloc}, so does the $\cent$-action on $E_{\infty}$. Therefore, the $\cent$-action on $\Gr^{p}_{\Fil}\homog{i}{\Spr_{\gamma}}$ also factors through $\pi_{0}(G_{\gamma}(F))$. Since $E^{p,q}_{2}=0$ unless $0\leq p\leq \rk(\Lambda)$, the same is true for $E^{p,*}_{\infty}=\Gr^{p}_{\Fil}\homog{*}{\Spr_{\gamma}}$. Note that $\rk(\Lambda)$ is the same as the $F$-rank of $G_{\gamma}$. This proves Theorem \ref{th:mainhomo}.

\appendix

\section{Sheaves and correspondences on spaces locally of finite type}
In this appendix, all algebraic spaces are locally of finite type over $k$.
\subsection{The category of sheaves}\label{a:lft}
Let $X$ be an algebraic space over $k$ which is locally of finite type. Let $\Ft(X)$ be the set of open subsets $U\subset X$ which are of finite type over $k$. We define
\begin{equation*}
\limD^{b}(X):=\varprojlim_{U\in\Ft(X)}D^{b}(U)
\end{equation*}
When $X$ itself is of finite type over $k$, $\Ft(X)$ has a final object $X$, so obviously $\limD^{b}(X)=D^{b}(X)$.

Concretely, an object in $\limD^{b}(X)$ is a system of complexes $\calF_{U}\in D^{b}(U)$ for each open subset $U\subset X$ of finite type over $k$, together with isomorphisms $\varphi^{U}_{V}:j^{*}\calF_{U}\isom\calF_{V}$ for each open embedding $j:V\hookrightarrow U$ satisfying obvious transitivity conditions. A morphism $\alpha:\{\calF_{U}\}\to\{\calG_{U}\}$ is a system of maps $\alpha_{U}:\calF_{U}\to\calG_{U}$ in $D^{b}(U)$ such that $\alpha_{U}$ restricts to $\alpha_{V}$ on $V$.

Examples of objects in $\limD^{b}(X)$ include the constant sheaf $\const{X}:=\{\const{U}\}$ and the dualizing complex $\DD_{X}:=\{\DD_{U}\}$.

\subsection{Functors}
Let $f:X\to Y$ be a morphism which is locally of finite type. We have the following functors
\begin{enumerate}
\item $f^{*}:\limD^{b}(X)\to\limD^{b}(Y)$. For $U\in\Ft(X)$, $f(U)$ is contained in some $V\in\Ft(Y)$. Denote by $f_{U,V}:U\to V$ the restriction of $f$. We define $(f^{*}\calG)_{U}=f_{U,V}^{*}\calG_{V}$.

\item $f^{!}:\limD^{b}(X)\to\limD^{b}(Y)$, defined in a similarly way as $f^{*}$: $(f^{!}\calG)_{U}:=f_{U,V}^{!}\calG_{V}$.

\item If $f$ is of finite type, we have
\begin{equation*}
f_{!}:\limD^{b}(X)\to\limD^{b}(Y)
\end{equation*}
For $V\in\Ft(Y)$, $f^{-1}(V)\in\Ft(X)$. Let $f_{V}:f^{-1}(V)\to V$ be the restriction of $f$. We define $(f_{!}\calF)_{V}:=f_{V,!}\calF_{f^{-1}(V)}$.

In general, if $f$ is only locally of finite type, we have
\begin{equation*}
f_{!}:\limD^{b}(X)\to\ind\limD^{b}(Y)
\end{equation*} 
where $\ind\limD^{b}(Y)$ denotes the category of ind-objects in $\limD^{b}(Y)$. We define $f_{!}\calF$ as the ind-object $\varinjlim_{U\in\Ft(X)}f_{U,!}\calF_{U}$, where $f_{U}:U\to Y$, the restriction of $f$, is of finite type, and $f_{U,!}$ is defined above.

\item If $f$ is of finite type, we have
\begin{equation*}
f_{*}:\limD^{b}(X)\to\limD^{b}(Y)
\end{equation*}
defined in a similar way as $f_{!}$: $(f_{*}\calF)_{V}:=f_{V,*}\calF_{f^{-1}(V)}$.

In general, if $f$ is only locally of finite type, we have
\begin{equation*}
f_{*}:\limD^{b}(X)\to\pro\limD^{b}(Y)
\end{equation*} 
where $\pro\limD^{b}(Y)$ denotes the category of pro-objects in $\limD^{b}(Y)$. We define $f_{*}\calF$ as the pro-object $\varprojlim_{U\in\Ft(X)}f_{U,*}\calF_{U}$.
\end{enumerate}

In particular, we can still define
\begin{equation*}
\homo{*}{X/Y}:=f_{!}\DD_{X}\in\ind\limD^{b}(Y).
\end{equation*}
When $Y=\Spec k$, we have
\begin{eqnarray*}
\cohoc{*}{X}=f_{!}\const{X}, \hspace{.5cm} \homog{*}{X}=f_{!}\DD_{X}\in\ind D^{b}(\Ql\textup{-vector spaces});\\
\cohog{*}{X}=f_{*}\const{X}, \hspace{.5cm} \hBM{*}{X}=f_{*}\DD_{X}\in\pro D^{b}(\Ql\textup{-vector spaces}).
\end{eqnarray*}

\subsection{Cohomological correspondences}\label{ss:corr}
In this appendix, we extend the formalism of cohomological correspondences (see \cite{SGA5} and \cite[Appendix A]{GS}) to situations where the relevant algebraic spaces are locally of finite type.

Consider a correspondence diagram
\begin{equation}\label{d:corr}
\xymatrix{ & C\ar[dl]_{\overleftarrow{c}}\ar[dr]^{\overrightarrow{c}} & \\
 X\ar[r]_{f} & S & Y\ar[l]^{g}}
\end{equation}
where
\begin{itemize}
\item $S$ is locally of finite type over a field $k$;
\item $f,g$ are locally of finite type;
\item $\oright{c}$ is proper and $\oleft{c}$ is of finite type.
\end{itemize}
For $\calF\in D^{b}(X)$ and $\calG\in D^{b}(Y)$, we define as in \cite[Definition A.1.1]{GS}
\begin{equation*}
\Corr(C;\calF,\calG):=\Hom_{\limD^{b}(C)}(\overrightarrow{c}^*\calG,\overleftarrow{c}^!\calF).
\end{equation*}
We call an element $\zeta\in\Corr(C;\calF,\calG)$ a {\em cohomological correspondence} between $\calF$ and $\calG$ with support on $C$.

Given $\zeta\in\Corr(C;\calF,\calG)$, we define
\begin{equation*}
\zeta_{\#}:g_{!}\calG\xrightarrow{g_!(\adj)}g_{!}\oright{c}_{*}\oright{c}^{*}\calG\xrightarrow{g_{!}\oright{c}_{*}\zeta}g_{!}\oright{c}_{*}\oleft{c}^{!}\calF=g_{!}\oright{c}_{!}\oleft{c}^{!}\calF=f_{!}\oleft{c}_{!}\oleft{c}^{!}\calF\xrightarrow{f_{!}(\adj)}f_{!}\calF.
\end{equation*}
In the equality above, we used $\oright{c}_{!}=\oright{c}_{*}$ since it is proper. Arrows indexed by ``$\adj$'' all come from the relevant adjunctions for the morphisms $\oleft{c}$ and $\oright{c}$, which are of finite type. Note that $\zeta_{\#}$ is a morphism in $\ind D^{b}(S)$. 

Most of the results in \cite[Appendix A]{GS} are still valid in this extended situation. In particular, the results on pull-backs of cohomological correspondences in \cite[Appendix A.2]{GS} extends {\em verbatim}.

\subsection{Composition}
Suppose we have the following diagram
\begin{equation}\label{d:comp}
\xymatrix{& &
C\ar[dl]^{\overleftarrow{d}}\ar@/_2pc/[ddll]_{\overleftarrow{c}}\ar[dr]_{\overrightarrow{d}}\ar@/^2pc/[drdr]^{\overrightarrow{c}} & & \\
& C_1\ar[dl]^{\overleftarrow{c_1}}\ar[dr]^{\overrightarrow{c_1}} & & C_2\ar[dl]_{\overleftarrow{c_2}}\ar[dr]_{\overrightarrow{c_2}} &\\
X\ar[drr]_{f} & & Y\ar[d]^{g} & & Z\ar[dll]^{h}\\
& & S & &}
\end{equation}
where $C=C_{1}\times_{Y} C_{2}$ and $C_{1}$ and $C_{2}$ satisfy the conditions in beginning of \S\ref{ss:corr}. Since $\oright{c_{1}},\oright{c_{2}}$ are proper, so are $\oright{d}$ and $\oright{c}$. Similarly, $\oleft{c}$ is of finite type. Hence $C$, as a correspondence between $X$ and $Z$, also satisfies the conditions in the beginning of \S\ref{ss:corr}.

Let $\calF\in\limD^{b}(X),\calG\in\limD^{b}(Y)$ and $\calH\in\limD^{b}(Z)$. The convolution product defined in \cite[Appendix A.2]{GS} extends to the current situation, giving a bilinear map
\begin{equation*}
\circ:\Corr(C_{1};\calF,\calG)\otimes\Corr(C_{2},\calG,\calH)\to\Corr(C;\calF,\calH).
\end{equation*}
The following statement is a variant of \cite[Lemma A.2.1]{GS}, and is proved by a diagram-chasing:

\begin{lemma}\label{l:comp}
Let $\zeta_{1}\in\Corr(C_{1};\calF,\calG)$ and $\zeta_{2}\in\Corr(C_{2};\calG,\calH)$. Then
\begin{equation*}
(\zeta_{1}\circ\zeta_{2})_{\#}=\zeta_{1,\#}\circ\zeta_{2,\#}:h_{!}\calH\to f_{!}\calF.
\end{equation*}
\end{lemma}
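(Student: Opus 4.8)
The plan is to follow the diagram-chase of \cite[Lemma A.2.1]{GS} essentially verbatim, checking at each stage that the finite-type hypotheses on the horizontal legs $\oleft{c_1},\oright{c_1},\oleft{c_2},\oright{c_2}$ make all the functors and base-change isomorphisms used there available in the locally-of-finite-type formalism of \S\ref{a:lft}. First I would spell out the three maps under comparison as explicit composites of adjunction units and counits together with the proper-pushforward identifications $\oright{c_i}_!=\oright{c_i}_*$, using the notation of diagram \eqref{d:comp}. Thus $\zeta_{2,\#}\colon h_!\calH\to g_!\calG$ is built from $h_!(\adj)$, $g_!\oright{c_2}_*(\zeta_2)$, the equalities $g_!\oright{c_2}_*\oleft{c_2}^!=g_!\oright{c_2}_!\oleft{c_2}^!$ and $g_!\oright{c_2}_!=(g\oright{c_2})_!$, and $g_!\oleft{c_2}_!(\adj)$ (reading $\oleft{c_2}$ for the last leg); similarly $\zeta_{1,\#}\colon g_!\calG\to f_!\calF$ factors through $f_!\oleft{c_1}_!\oleft{c_1}^!\calF$; and $(\zeta_1\circ\zeta_2)_\#\colon h_!\calH\to f_!\calF$ is the analogous composite for the correspondence $C=C_1\times_Y C_2$ with legs $\oleft{c}=\oleft{c_1}\circ\oleft{d}$ and $\oright{c}=\oright{c_2}\circ\oright{d}$.

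Next I would insert the base-change isomorphism for the Cartesian square
\[
\xymatrix{C\ar[r]^{\oright{d}}\ar[d]_{\oleft{d}} & C_2\ar[d]^{\oleft{c_2}}\\ C_1\ar[r]^{\oright{c_1}} & Y}
\]
Since $\oright{c_1}$, hence $\oright{d}$, is proper, proper base change gives $\oleft{c_2}^*\oright{c_1}_*\cong\oright{d}_*\oleft{d}^*$ in $\limD^b$, and this is exactly the isomorphism entering the definition of the convolution $\zeta_1\circ\zeta_2$ in \cite[Appendix A.2]{GS}. The remaining content is then a purely formal reorganization of the big composite defining $(\zeta_1\circ\zeta_2)_\#$ into $\zeta_{1,\#}\circ\zeta_{2,\#}$, using: compatibility of the counit of $(\oright{c}_*,\oright{c}^*)$ with the factorization $\oright{c}=\oright{c_2}\circ\oright{d}$, the analogous statement for $\oleft{c}=\oleft{c_1}\circ\oleft{d}$, functoriality of the various adjunctions, and the projection/base-change identities — all of which hold in $\limD^b$ (and termwise in $\ind\limD^b$) by the setup of \S\ref{a:lft}, the finite-type hypotheses guaranteeing that $\oleft{c_i}_!$ and $\oright{c_i}_*$ do not leave $\limD^b$. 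I would present this as: the diagram of \cite[Lemma A.2.1]{GS} commutes, and every arrow in it is defined by the same formula here.

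The only genuine obstacle is bookkeeping: the diagram to be chased is large, one must keep straight which square each base-change map comes from and the direction of every adjunction arrow, and one must check that each displayed arrow is a morphism of ind-objects, i.e. compatible with the transition maps as the finite-type opens grow. For the last point I expect the verification to reduce, level by level, to the finite-type statement: a finite-type open $V\subset X$ pulls back to a finite-type open of $C_1$, whose image lies in a finite-type open of $Y$, which pulls back to finite-type opens of $C_2$ and of $Z$, so the entire composite is assembled from the finite-type case of \cite[Lemma A.2.1]{GS} applied to these opens, compatibly with enlargement. I therefore anticipate that, modulo this termwise remark, the proof is a reference to \cite[Lemma A.2.1]{GS} plus the observation that nothing in that diagram-chase uses more than the six-functor formalism recorded in \S\ref{a:lft}.
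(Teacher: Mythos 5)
Your proposal is correct and takes essentially the same route as the paper, which gives no argument beyond the remark that the lemma is a variant of the corresponding result in \cite[Appendix A.2]{GS} ``proved by a diagram-chasing.'' Your added observation that the check reduces, termwise over finite-type opens, to the finite-type case is precisely the bookkeeping the paper leaves implicit.
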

The associativity of the convolution $\circ$ also holds, see \cite[Lemma A.2.2]{GS}.

\subsection{Property (G-2)}\label{a:G2}
From now on we assume both $X$ and $Y$ are smooth of equidimension $d$. Recall from \cite[Appendix A.6]{GS} that we say $C$ has Property (G-2) with respect to an open subset $U\subset S$ if $\dim C_U\leq d$ and the image of $C-C_U\to X\times_{S}Y$ has dimension $<d$.

\cite[Lemma A.6.2]{GS} now reads

\begin{lemma}\label{l:openpart}
Suppose $C$ satisfies (G-2) with respect to $U\subset S$. Let $\zeta,\zeta'\in
\Corr(C;\const{X},\const{Y})$. If $\zeta|_U=\zeta'|_U\in\Corr(C_U;\const{X_U},\const{Y_U})$, then
$\zeta_\#=\zeta'_\#\in\Hom_S(g_!\const{Y},f_!\const{X})$. 
\end{lemma}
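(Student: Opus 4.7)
The plan is to mimic the argument of \cite[Lemma A.6.2]{GS} in this slightly more general setting. By linearity it suffices to prove: if $\zeta\in\Corr(C;\const{X},\const{Y})$ restricts to zero on $C_U$, then $\zeta_\#=0$. Using the smoothness and equidimensionality of $X$, together with the fact that $\oleft{c}$ is of finite type (so $\oleft{c}^!\const{X}\cong\DD_C[-2d](-d)$), I would identify
\begin{equation*}
\Corr(C;\const{X},\const{Y})=\Hom_{\limD^{b}(C)}(\const{C},\oleft{c}^!\const{X})\cong\hBM{2d}{C}(d),
\end{equation*}
and similarly for any locally closed piece of $C$. Let $j:C_U\hookrightarrow C$ and $i:Z\hookrightarrow C$ be the open and closed embeddings with $Z=C\setminus C_U$; the excision triangle $i_!i^!\to\id\to j_*j^*$ applied to $\DD_C$ yields a long exact sequence $\hBM{2d}{Z}\to\hBM{2d}{C}\to\hBM{2d}{C_U}$. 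The hypothesis $\zeta|_U=0$ thus implies $\zeta=i_*\eta$ for some $\eta\in\Corr(Z;\const{X},\const{Y})$, and the naturality of the adjunctions used to define $(-)_\#$ (together with Lemma \ref{l:comp} applied to the trivial correspondence $Z\hookrightarrow C$) gives $\zeta_\#=\eta_\#$.

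Next, I would push the correspondence $\eta$ forward to $X\times_S Y$. Since $\oright{c}$ is proper and the projection $X\times_S Y\to Y$ is separated, the product map $\rho=(\oleft{c},\oright{c}):C\to X\times_S Y$ is proper, hence so is its restriction $\rho_Z:Z\to X\times_S Y$. Let $W$ be the scheme-theoretic image of $\rho_Z$, a closed subspace of $X\times_S Y$; the factorization $Z\xrightarrow{q}W\xrightarrow{\iota}X\times_S Y$ has $q$ proper and surjective. Pushing forward along the proper map $q$, the class $\eta\in\hBM{2d}{Z}(d)$ yields a class $\eta'=q_*\eta\in\hBM{2d}{W}(d)=\Corr(W;\const X,\const Y)$, and again by functoriality of $(-)_\#$ (Lemma \ref{l:comp}) we have $\eta_\#=\eta'_\#$ as maps $g_!\const{Y}\to f_!\const{X}$.

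Finally, the hypothesis that $C$ satisfies Property (G-2) with respect to $U$ gives $\dim W<d$, so $\hBM{2d}{W}=0$ for dimension reasons. Hence $\eta'=0$, and therefore $\zeta_\#=\eta_\#=\eta'_\#=0$, which is what we wanted. The only delicate step will be ensuring that the excision sequence and the identification of cohomological correspondences with Borel-Moore homology classes work correctly in the framework of algebraic spaces locally of finite type developed in Appendix \ref{a:lft}; once one observes that all the sheaves and maps involved are supported on (or factor through) open subsets of finite type obtained from $\Ft(X)$ and $\Ft(Y)$, this amounts to checking that the finite-type case of \cite[Lemma A.6.2]{GS} applies level-by-level, which poses no real obstacle.
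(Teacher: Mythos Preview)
Your proposal is correct and follows exactly the approach the paper takes: the paper gives no independent proof but simply records this as the extension of \cite[Lemma A.6.2]{GS} to the locally-of-finite-type setting, and your sketch reproduces that argument (excision to lift $\zeta$ to $Z=C\setminus C_U$, proper pushforward to the image $W\subset X\times_S Y$, and vanishing of $\hBM{2d}{W}$ by the dimension bound in (G-2)). One small point to tighten: the properness of $\rho=(\oleft{c},\oright{c})$ uses that $X\to S$ is separated, which is implicit in \cite{GS} but not stated in Appendix~\ref{a:lft}; this is harmless for all the applications in the paper but worth noting.
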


\subsection*{Acknowledgement}
The author would like to thank R.Bezrukavnikov, B-C.Ng\^o and Y.Varshavsky for helpful discussions.

\end{document}